\documentclass[12pt,a4paper]{amsart}
\usepackage{amsfonts}
\usepackage{amsthm}
\usepackage{amsmath}
\usepackage{mathtools}
\usepackage{amssymb}
\usepackage{amscd}
\usepackage[latin2]{inputenc}
\usepackage{t1enc}
\usepackage[mathscr]{eucal}
\usepackage{indentfirst}
\usepackage{graphicx}
\usepackage{graphics}
\usepackage{dirtytalk}
\usepackage{pict2e}
\usepackage{epic}
\usepackage{verbatim}
\usepackage{color,soul}
\usepackage{etoolbox}
\usepackage{enumitem}
\numberwithin{equation}{section}
\usepackage[margin=2.9cm]{geometry}
\usepackage{epstopdf}
\usepackage{tikz-cd}
\usepackage[all]{xy}
 
\usepackage{hyperref}
\setcounter{tocdepth}{4}

\graphicspath{ {figures/} }
\usepackage{array}

\theoremstyle{plain}
\newtheorem{Th}{Theorem}[section]
\newtheorem{Lemma}[Th]{Lemma}

\newtheorem{Prop}[Th]{Proposition}
\newtheorem{Claim}[Th]{Claim}

 \theoremstyle{definition}
\newtheorem{Def}[Th]{Definition}

\newtheorem{Rem}[Th]{Remark}
\newtheorem{?}[Th]{Problem}
\newtheorem{Ex}[Th]{Example}

\DeclareMathOperator{\sign}{sign}
\newcommand{\interior}[1]{%
  {\kern0pt#1}^{\mathrm{o}}%
}
\newcommand{\R}{\mathbb{R}}
\newcommand{\C}{\mathbb{C}}
\newcommand{\Z}{\mathbb{Z}}
\newcommand{\D}{\mathcal{D}}
\newcommand{\Q}{\mathbb{Q}}
\newcommand{\bd}{\partial}

\newcommand{\Li}{\mathcal{L}}
\begin{document}

\title[The Contact Banach-Mazur distance]{The Contact Banach-Mazur distance \\ and large scale geometry of overtwisted contact forms}

\author[]{Thomas Melistas}

\address{Department of Mathematics, University of Georgia, Athens, GA 30602} 

\email{thomas.melistas@uga.edu}



\begin{abstract} In the symplectic realm, a distance between open starshaped domains in Liouville manifolds was recently defined. This is the symplectic Banach-Mazur distance. It was proposed by Ostrover and Polterovich and developed by Ostrover, Polterovich, Usher, Gutt, Zhang and Stojisavljevi\'c. The natural question is, can an analogous distance in the contact realm be defined? One idea is to define the distance on contact hypersurfaces of Liouville manifolds and another one on contact forms supporting isomorphic contact structures. Rosen and Zhang recently defined such a distance working with manifolds that are prequantizations of Liouville manifolds. They also considered a distance on contact forms supporting the same contact structure on a contact manifold $Y$. This allowed them to view the space of contact forms supporting isomorphic contact structures on a manifold $Y$ as a pseudometric space, study its properties, and derive interesting results. In this work, we do something similar, yet the distance we define is less restrictive. Moreover, viewing contact homology algebra as a persistence module, focusing purely on the overtwisted case and exploiting the fact that the contact homology of overtwisted contact structures vanishes, allows us to bi-Lipschitz embed part of the 2-dimensional Euclidean space into the space of overtwisted contact forms supporting a given contact structure on a smooth closed manifold $Y$.
\end{abstract}

\maketitle

\tableofcontents

\listoffigures

\section{Introduction} 

Let $(Y,\xi)$ be a closed, co-oriented contact manifold of dimension $2n-1$. A consequence of Gray's stability theorem is that the space of contact structures up to diffeomorphism $\Xi(Y)/Diff(Y)$ on an odd dimensional closed manifold Y is discrete. In other words, there are no non-trivial deformations of a contact structure. The elements of this space are contactomorphism classes of contact structures defined on the smooth manifold $Y$. Although from a topological point of view there is no difference for the contact structure up to isotopy, the dynamics depend highly on the particular 1-form co-orienting the manifold $Y$, hence they can be vastly different. Looking at everything from the dynamics perspective, allows to ask the questions like how \say{large} is a class in $\Xi(Y)/Diff(Y)$, or in other words how far apart are two representatives of the same contactomorphism class. The idea on how to measure their distance comes from an analogy with the case of symplectic manifolds. In particular, for open Liouville domains the idea, which is inspired by convex geometry and the Banach-Mazur distance, is to look at the optimal way to interleave them, see \cite{2018arXiv181100734U}. \\ \par

 Interleaving is usually achieved by rescaling, so the way to rescale is a key issue that needs to be addressed when attempting to define a Banach-Mazur distance. Some very interesting attempts have already been made. One can work as in section 1.2.1 of \cite{2020arXiv200105094R} where Rosen and Zhang work with the case of fiberwise star-shaped domains $U$ in the contactization of a Liouville manifold $W$, namely $W \times S^1$ and with subdomains of contact manifolds that are boundaries of Liouville domains. Otherwise, one can look at their setup in section 1.2.2,  in which they define a distance between contact forms supporting isomorphic contact structures. This distance can in turn be used to define a distance between closed Liouville fillable contact manifolds.  In subsection \ref{JD}, we briefly recall part of their work, mainly focusing on the distance between contact forms as it is most relevant here and compare results to ours.   \\ \par 
 
 The most natural space in which two contactomorphic contact manifolds should be interleaved appears to be their common symplectization. This idea stems from the fact that Liouville manifolds decompose as the union of the symplectization of a hypersurface of restricted contact type $Y$ (which can be viewed as the boundary of the corresponding Liouville domain bounded by $Y$) and their core or skeleton. To be more specific, the symplectization of the boundary $Y$ of a Liouville domain $W$ sits naturally in the completion $\widehat{W}$ of the Liouville domain. The key advantage is that the symplectization also works in the case of absence of a core, namely the overtwisted or more generally in the non-fillable case. The main difference between a fillable and a non-fillable $Y$ appears to be the existence of a core, so this indicates that the distance may be defined when restricting to the same contactomorphism class in both the fillable and the non-fillable case. In this work, we approach the problem similarly to the second setup of Rosen and Zhang \cite{2020arXiv200105094R}, i.e. the case of contact forms supporting isomorphic contact structures, yet we allow more flexibility using appropriate embeddings in the symplectization, which we call cs-embeddings (because a Contact manifold is embedded into its Symplectization), resulting in a distance which does not only depend on conformal factors. The distance that we use here is the contact Banach-Mazur distance which is defined in subsection \ref{ImportantDefinitions}. We denote it by $d_{CBM}$. \\ \par
 
 The main result of this work is a mix of quantitative, dynamical and topological nature. Let $\mathcal{C}_{ot}^{Y,\xi}$ denote the space of contact forms on the closed manifold $Y$, supporting the co-oriented overtwisted contact structure $\xi$, which are positive with respect to the co-orientation. Let also $\mathbb{H}$  be the lower half-space $\mathbb{H}$ in $\R^2$ and $d_{\infty}$ denote the metric induced from the norm $||\cdot||_{\infty}$ in $\R^2$.
 
 \begin{Th}[Main Theorem]
 There exists a bi-Lipschitz embedding $F: (\mathbb{H},d_{\infty}) \rightarrow (\mathcal{C}_{ot}^{Y,\xi},d_{CBM})$.
 \end{Th}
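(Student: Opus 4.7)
The plan is to exhibit $F$ explicitly as $F(a,b) = e^{a}\,\alpha_b$, where $\alpha_0$ is a fixed reference form in $\mathcal{C}_{ot}^{Y,\xi}$ and, for $b \le 0$, $\alpha_b$ is obtained from $\alpha_0$ by a local modification, supported inside a small Darboux-type ball, that implants a standard overtwisted piece whose distinguished Reeb orbit has a controlled action (say of order $e^{-b}$). Thus the first coordinate is a pure conformal rescaling and the second records the size of an implanted overtwisted feature. Since bi-Lipschitz requires both inequalities, I would prove the upper and lower estimates for $d_{CBM}$ separately.

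For the upper bound, I would construct explicit cs-embeddings of the symplectization of $F(a_1,b_1)$ into that of $F(a_2,b_2)$ in both directions. The $a$-coordinate is handled by the trivial vertical translation of the symplectization, which realizes a positive constant rescaling at cost exactly $|a_1 - a_2|$. The $b$-coordinate is handled by a localized interpolation, supported in the implantation ball, combined with a rescaling of the inserted overtwisted piece by a factor controlled by $e^{|b_1-b_2|}$; the rest of the manifold is carried across by a conformally trivial embedding. Combining the two gives $d_{CBM}(F(a_1,b_1),F(a_2,b_2)) \le C\,\|(a_1,b_1)-(a_2,b_2)\|_{\infty}$.

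The lower bound is the heart of the argument, and here I would exploit the vanishing of the contact homology algebra in the overtwisted case. Viewing the action-filtered contact homology of each form as a persistence module over $\R$, vanishing at the unfiltered level forces the bar decomposition to consist of finite-length bars only, from which one extracts a genuine numerical invariant $\rho(\alpha)$ (e.g.\ the maximal bar length, or a suitable spectral number attached to a canonical class). A cs-embedding with conformal factor $C$ induces a $C$-interleaving of these persistence modules, so a standard stability argument yields a Lipschitz estimate of the form $|\log\rho(\alpha) - \log\rho(\beta)| \le d_{CBM}(\alpha,\beta)$. Together with the homogeneity $\rho(c\,\alpha) = c\,\rho(\alpha)$, this detects the $a$-coordinate, while a companion invariant (for instance the smallest filtration level at which a distinguished class vanishes), engineered to scale as $e^{-b}$ on $\alpha_b$, detects the $b$-coordinate. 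Evaluating the pair on $F(a,b)$ and solving for $(a,b)$ then produces the required lower bound.

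The main obstacle will be the interplay between the geometric implantation and the persistence invariants: the inserted overtwisted piece must be engineered so that its action contribution is cleanly visible in the filtered contact homology, not masked by pre-existing Reeb orbits of $\alpha_0$, and it must depend on $b$ in a uniformly Lipschitz manner across the entire half-space $\mathbb{H}$. Setting up contact homology for overtwisted forms as a persistence module (transversality, compactness, and the precise bar decomposition) is where most of the technical effort will be concentrated, though the vanishing of the total algebra considerably simplifies matters compared to the fillable case.
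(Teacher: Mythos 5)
There is a genuine gap, and it sits in your lower bound. The two invariants you propose to extract from the persistence module of filtered contact homology --- the maximal finite bar length $\rho$ and the smallest filtration level at which the distinguished (unit) class vanishes --- are not independent. In the contact homology \emph{algebra}, if $\partial x=1$ then the Leibniz rule gives $\partial(xy)=y\pm x\,\partial y$, so every class admits a primitive within $\mathcal{A}(x)$ of its birth; hence the longest finite bar \emph{is} the bar of the unit, and both of your invariants evaluate on $F(a,b)$ to essentially the same number, of the form $e^{a}\cdot(\text{implanted action})$. The system you intend to ``solve for $(a,b)$'' is therefore degenerate: it controls only one combination of the coordinates, two points with the same value of that combination are not separated, and the bi-Lipschitz lower bound fails. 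What is actually needed is a second quantity independent of the barcode; the paper uses the contact volume, proving separately that $\alpha\prec\beta$ forces $Vol(Y,\alpha)\leq Vol(Y,\beta)$ (a Stokes argument on the trivial Liouville cobordism between the cs-embedded image and the graph hypersurface, after checking that cs-embeddings preserve volume) together with the scaling law $Vol(C\alpha)=C^{n}Vol(\alpha)$, while the $l$-invariant, monotone under $\prec$ by functoriality of $CH^{\leq t}$, detects the other coordinate. Your interleaving/stability step is fine in spirit (it is the paper's monotonicity of $l$ under $\prec$), but by itself it yields only one coordinate's worth of information.

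Two further problems. First, your parametrization gives the implanted orbit action of order $e^{-b}$ with $b\leq 0$, i.e.\ unbounded as $b\to-\infty$; but the vanishing level of the unit is capped by whatever primitive the ambient overtwisted structure already supplies away from the implantation, so beyond that cap the invariant saturates and cannot see $b$ --- the ``masking'' you flag is not a removable technicality in this direction, it is fatal. The construction must run the other way: the new orbit is arranged to be the orbit of \emph{least} action among all Reeb orbits, and the parameter domain is cut off at $\epsilon=\min\{\ln A,\ln B\}$ (lowest pre-existing action, and the action of the orbits near the volume-compensating region); this is precisely why the source is a half-space. Second, in the upper bound, a ``localized interpolation plus rescaling of the inserted piece'' does not directly produce cs-embeddings with a controlled constant: varying the twisting profile is not a conformal rescaling, it also perturbs the volume (forcing the detour through an intermediate parameter point and a compensating factor supported near a second transverse knot), and the estimate $d_{CBM}\leq|\ln l_1-\ln l_2|$ is obtained via Gray stability and Moser's trick, with the path $(h_1,h_{2,t})$ engineered so that the conformal factor satisfies $\|\ln f_t-\ln f_s\|_\infty\leq|\ln l_t-\ln l_s|$. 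Finally, the technical heart --- that the implanted orbit has degree one and bounds a \emph{unique} finite-energy plane, so the $l$-invariant equals its action (Wendl's foliation and positivity of intersections in dimension three, Bourgeois--Van Koert open books in higher dimensions) --- is deferred entirely in your outline, and without it neither invariant is computable on $F(a,b)$.
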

 
 The core of the proof of the main theorem is to be able to control the action level for which the identity becomes an exact element in the filtered contact homology algebra. In 3 dimensions, our goal will be to modify a construction by Wendl in \cite{Wend}, so as to be able to know precisely what is the action level for which the unit in the contact homology algebra $CH(Y,\lambda_{ot})$ of an overtwisted contact manifold becomes exact. This is the subject of section \ref{Wendlwork}. In higher dimensions, we follow Bourgeois and Van Koert's approach from \cite{MR2646902} which uses the characterization of overtwisted contact manifolds as negatively stabilized open books. This is discussed in \ref{OBA}. As will be explained, in all dimensions, the action level for which the unit in the contact homology algebra becomes exact corresponds to the right endpoint of the largest finite bar in the barcode. \\ \par

 The importance of this control is also justified by the following well known observation. We know that the vanishing level of the class of the unit controls all other vanishing levels just by using Leibniz rule. This can be seen as follows. If $y$ represents a class in the contact homology algebra, then we need to find an element that maps to $y$ under $\partial$, i.e. show that any element is exact. If $x$ is the orbit bounding the pseudoholomorphic plane, then $\partial x=1$. Thus, using Leibniz rule we see that $\partial(xy)=(\partial x)y\pm x\partial y=y$ (note that $y$ is closed as it represents a class). If $y$ has action $\mathcal{A}(y)$, then $x\cdot y$ has action $\mathcal{A}(x)+\mathcal{A}(y)$, hence the vanishing level of the class $[y]$ is at most $\mathcal{A}(x)+\mathcal{A}(y)$, which shows that the length of its corresponding finite bar is $\mathcal{A}(x)+\mathcal{A}(y)-\mathcal{A}(y)=\mathcal{A}(x)=l$, hence in the case of contact homology algebra, the bar of the unit is the longest and most essential one. This is essentially another application of the argument used to show the vanishing of contact homology of overtwisted structures, if we have proven the existence of a unique pseudoholomorphic plane bounded by a Reeb orbit.
 
 \subsection{Organization}
 In subsection \ref{ImportantDefinitions} we provide the main definitions needed to study this work. In subsection \ref{MainResults} we describe our main results more thoroughly. In short, these amount to defining the contact Banach-Mazur pseudodistance between contact forms and using it to show that the lower half-space in $\R^2$ bi-Lipschitz embeds in the space of overtwisted contact forms supporting a given overtwisted contact structure. In subsection \ref{JD} we recall the relevant definitions from \cite{2020arXiv200105094R} where a different and more restrictive flavor of the distance is defined and compare their results to ours. In particular, using symplectic folding we exhibit the main differences between our definitions. Section \ref{Contact homology} is devoted to recalling the construction of (filtered) contact homology which viewed as a persistence module gave us the idea to produce the bi-Lipschitz embedding. Section \ref{pf} provides the proof of the main bi-Lipschitz embedding theorem, theorem \ref{quasiisometricembedding}. Section \ref{Higherdim} addresses the question of extending the result of the previous section to higher dimensions. Finally, section \ref{Remarkonhigherdegrees} describes the situation if one would like to attempt to bi-Lipschitz embed $\R^n$ into the space of overtwisted contact forms supporting isomorphic contact structures for $n>2$. There is no definite answer provided there.\\ \par

\textbf{Acknowledgements.}
I am very grateful to my advisor, Michael Usher, for his guidance, patience and support and for teaching me so many interesting things during this work. I would also like to thank Leonid Polterovich and Jun Zhang for helpful discussions and insightful comments. This work was partially supported by the NSF through the grant DMS-1509213.

\subsection{Definitions} \label{ImportantDefinitions} \text{} \\ \par

Throughout this paper, unless otherwise stated, $Y$ will be a $(2n-1)$-dimensional closed manifold with a co-oriented contact structure $\xi$. The contact Banach-Mazur distance is a distance between 2 co-orientation compatible contact forms on $Y$ having the same kernel, the contact hyperplane field $\xi$. Fixing a hypersurface of restricted contact type $Y_0$ (see definition \ref{restr}) in a Liouville manifold $W$, the distance can also be defined between contact hypersurfaces of restricted contact type that are in the image of the Liouville flow starting at $Y_0$ and flowing for either positive or negative time, not necessarily uniformly. \\ \par

In what follows, we are going to need the notion of the symplectization of a contact manifold $(Y,\xi)$. There are two versions of the definition. One of them does not require the choice of a contact form in order to be defined and it is a special line subbundle of the cotangent bundle of $Y$. The other one involves the choice of a co-orienting contact form for $Y$. This choice of global contact form for $Y$ yields a splitting of the symplectization $SY$ as a trivial principal $\mathbb{R}_+$-bundle, $\mathbb{R}_+ \times Y$. The advantage of the first one is obvious, while the advantage of the second one is of course the convenience of being able to perform hands on computations. We start with the latter one.

\begin{Def}
We define the symplectization of $(Y,\xi=ker(\lambda))$ to be the manifold $(S_\lambda Y=M=\mathbb{R}_+ \times Y,\omega=d(r\lambda))$, where $r$ is the real positive coordinate.
\end{Def}

One easily checks that $(S_\lambda Y,\omega)$ is a symplectic manifold. The fact that $\omega$ is closed is immediate since it is exact. The non-degeneracy is equivalent to the contact condition for $\lambda$. Note that implicitly in this definition we chose a global form $\lambda$ for $Y$. The choice-free definition of the symplectization is as follows. We fix a co-orientation for $\xi$.

\begin{Def}\label{ChoiceFree}
We define the symplectization of $(Y,\xi)$ to be $M=SY=\displaystyle\bigcup_{y \in Y}S_yY$, where $S_yY:=\Bigg\{ \beta \in T_y^*Y-\{0\} \Bigg| \begin{aligned}
   & \hspace{10pt}  ker(\beta)=\xi_y \text{ and } \beta>0 \cr & \hspace{10pt} \text{ on vectors positively transverse to } \xi_y
   \end{aligned} \Bigg\}$
   
\end{Def} \vspace{7pt} \par

The symplectization of $Y$ is a submanifold of its cotangent bundle $T^*Y$. As it is known, $T^*Y$ comes naturally equipped with the canonical, or tautological, or Liouville 1-form $\theta$ and it turns out that $d\theta$ is a symplectic form when restricted to $SY$. Thus, $(SY,d\theta)$ is a symplectic manifold. There is an identification between the two versions of the symplectization which sends $d(r \lambda) \rightarrow d\theta$. We will primarily work with the latter formulation of the definition as it requires no reference to a contact form $\lambda$, yet when concrete calculations are needed we will work using the former one.\\ \par

We denote by $L=L_\theta$ the Liouville vector field of the symplectization $SY$, i.e. the unique vector field on $(SY,\omega)$ satisfying $i_L\omega=i_L d\theta=\theta$. One can see that $(Y,\alpha)$ sits in a standard way as a contact hypersurface $Y_\alpha$ inside $SY$. This is understood as follows. $Y$ gets identified with the graph of its contact form $\alpha$ inside $SY$ which is viewed as a subbundle of $T^*Y$. It is important to remember that we will denote its identification by $Y_\alpha$. Different choices of a contact form with kernel $\xi$ yield different embeddings of $(Y,\xi)$ into $SY$ and different splittings of $SY$.
\\ \par

When we need to focus on the contact dynamics instead of just the contact structure itself, we use the notion of a strict contact manifold. A strict contact manifold is a closed manifold $Y$ equipped with a co-orienting contact form $\alpha$. The term is not standard in the literature, yet it is very useful here as we work with contact forms and not only contact structures.

\begin{Def}\label{contactembedding}
  By a cs-embedding of a strict contact manifold $(X,\alpha)$ to $(SY,d\theta)$ we mean an embedding $\phi:(X,\alpha) \rightarrow (SY,d\theta)$ with $\phi^*(\theta+\eta)=\alpha$, where $\eta$ is an exact, compactly supported 1-form on $SY$.
\end{Def}
Figure \ref{fig:contactemb} illustrates this definition.

\begin{figure}[h!]
    \centering
    \includegraphics[scale=1]{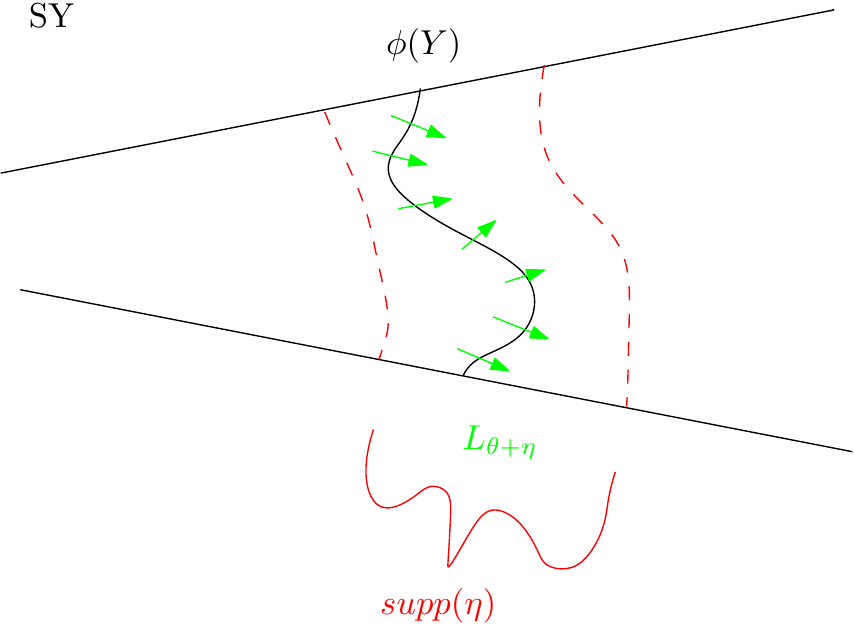}
    \caption{A cs-embedding}
    \label{fig:contactemb}
\end{figure}
 
 This definition is equivalent to the statement that $X$ embeds as a hypersurface transverse to the Liouville vector field defined by $i_L\omega=\theta+\eta$. Note that if $\eta=0$, then the Liouville vector field is simply $L=p\partial_p$, where $p$ are the cotangent fiber coordinates. So, the relaxed condition allows our embeddings to be transverse to some Liouville vector field dictated by $\theta + \eta$ and not just the standard one.\\ \par  
 
We denote the Liouville flow for time t by $L^{t}$.
Note that under this flow, the contact form $\lambda$ used to decompose $SY$ gets multiplied by $e^{\ln(r)}=r$. This is because the flow of the Liouville vector field $L$ conformally expands volume since by definition $\Li_L\omega=\omega$, where $\Li$ denotes the Lie derivative operation. The relationship between $t$ and $r$ is $t=\ln(r)$.\\ \par

A way to produce such cs-embeddings is to postcompose the standard embedding induced by $\alpha$, by a compactly supported symplectic isotopy $\Phi_t$.

\begin{Rem}\label{HamRem}
The symplectic isotopy is automatically Hamiltonian. The flux determines whether the symplectic isotopy is a Hamiltonian.
Looking at \cite{McDuff-Salamon}, section 10.2, the flux homomorphism corresponds to a homomorphism $\pi_1(SY)\rightarrow \R$, defined by
$$\gamma \mapsto \displaystyle\int_0^1\int^1_0\omega(X_t(\gamma(s)),\dot{\gamma}(s))dsdt, \quad \gamma: \R/\Z \rightarrow SY$$
where $X_t$ the vector field generating the isotopy. Lemma 10.2.1 in \cite{McDuff-Salamon} states that the right hand side above depends only on the homotopy class of $\gamma$ and the homotopy class of $\Phi_t$ with fixed endpoints. The value of Flux($\{\Phi_t\}$) on the loop $\gamma$ is the area swept by the loop under the symplectic isotopy $\Phi_t$. Any loop in the compact support of $\Phi_t$ is homotopic to one outside of the support of the symplectic isotopy $\Phi_t$. Thus, the flux of any loop is equal to zero and hence $\Phi_t$ is Hamiltonian.
\end{Rem}

As stated in the first paragraph of this section, the following definition will also be useful. 

\begin{Def}\label{restr}
 Let $(W,\theta)$ be an exact symplectic manifold. A codimension-one smooth submanifold $Y \subset W$ is said to be a restricted contact type hypersurface of $(W,\theta)$ if the
Liouville vector field $L$ is transverse to $Y$, i.e. $\forall y \in Y$ we have $L_y \notin T_yY$.
\end{Def}
One application of this definition will be, in the case that $Y$ is fillable, to relate $d_{CBM}$ with $d_c/d_{SBM}$ as we can view $Y$ as the contact type boundary of a starshaped domain.\par
We will define a partial order to the set of co-orientation compatible contact forms having kernel $\xi$. First, we provide some preliminary definitions.

\begin{Def}
 Let $Y_\beta$ be the standard embedding of $(Y,\beta)$ in $SY$ as the image of the form $\beta$ in $T^*Y$. Then we define $W(\beta)=\{p \in SY \mid 0 < p(v) \leq \beta(v), \forall v \in TY \text{  such that  } \beta(v)>0 \}$.
\end{Def}

If we choose a contact form, namely a splitting for $SY$, the above definition turns into the following one which is more suitable for calculations.

\begin{Def}
 Let $Y_\beta$ be the standard embedding of $(Y,\beta)$ in $(S_\beta Y=\mathbb{R}_+\times Y,d(r\beta))$. Then we define $W(\beta)=\{(s,y) \in SY \mid s \leq 1\}$.
\end{Def}

The partial order is defined as follows.

\begin{Def}\label{dcbmdefn}
$\alpha \prec \beta$ iff there is a cs-embedding in the sense of definition \ref{contactembedding}, $\phi:(Y,\alpha) \rightarrow SY$ such that $\phi(Y)\subset W(\beta)$.
\end{Def}

\begin{Rem}
Note that later we will use the notation $\preceq$ for another partial order, so a warning should be given here. $\preceq$ will be referring to Rosen-Zhang partial order.
\end{Rem}

Recall that an example of a cs-embedding is produced by postcomposing the standard embedding by symplectic isotopies. The case when this isotopy has empty support corresponds to the partial order $\preceq$ as the following example shows.

\begin{Ex}
If already $Y_\alpha \subset W(\beta)$, then we can take the support of the isotopy to be $\emptyset$. One such example is when there are contactomorphisms such that $\phi^*(\alpha)=h_1(y)\lambda$, $\psi^*(\beta)=h_2(y)\lambda$ and $h_1(y)\leq h_2(y)$, i.e. using notation that will be made more precise in section \ref{JD}, $\alpha \preceq \beta$. The obvious obstruction in that setting is the volume of $Y_\alpha$ being larger than the volume of $Y_\beta$.
\end{Ex}

\begin{Def}
Let $(Y,\alpha)$, $(Y,\beta)$ be two contact manifolds in the same contactomorphism class and $(SY,d\theta)$ their common symplectization. We define the contact Banach-Mazur distance between $\alpha$ and $\beta$ to be 
$$
d_{CBM}(\alpha,\beta):=\inf\{\ln{C} \in [0,\infty) \mid \alpha \prec C\cdot \beta, \beta \prec C\cdot \alpha \}
$$
\end{Def}

\vspace{10pt}
In view of definition \ref{ChoiceFree}, it is obvious that if $(Y,\alpha)$ is contactomorphic to $(Y,\beta)$ then they have the same symplectization. So, the reference to the symplectization in the definition above is not ambiguous. The fact that this is a pseudodistance is proved in the following section. \\ \par

We will be measuring the volume of the image of a cs-embedding of a contact manifold $Y$ into the relevant symplectization as follows. 

\begin{Def}
 Let $\theta$ be the canonical form of the symplectization and $\alpha_0:=\theta|_{\phi(Y)}$. Then $Vol(\phi(Y)):=\int_{\phi(Y)}\alpha_0 \wedge (d\alpha_0)^{n-1}$
\end{Def}

Dealing with contact forms and not just contact structures provides the advantage of being able to obtain dynamical (and not just topological) information about contact manifolds, thus being able to obtain obstructions (for instance by using the barcodes of corresponding persistence modules of contact homologies) to the existence of symplectic cobordisms between them or symplectic embeddings of their respective fillings, otherwise not detected considering the contact structure itself. Of course, in the overtwisted case, fillings are excluded by a theorem of Gromov-Eliashberg which states that if a contact manifold is fillable, then it is tight.\\ \par

As a last introductory note, the distance defined above is easily seen to be non trivial since contact forms yielding different volume for $Y$ are at positive distance apart. As we will see later on, the definition of this distance is not semi-vacuous by depending only on volume, as it is also possible for contact forms with the same volume to be positive distance apart.

\section{Statement of the Results}

\subsection{Main Results} \label{MainResults} \text{} \\ \par
In this section we provide the main results and we only give some of the most straightforward proofs. The rest of the proofs are given in following sections as we first need to recall some tools and ideas from the literature for each one respectively.

\begin{Th}
\label{pseudodistance}
$d_{CBM}$ is a pseudodistance on the space of contact forms supporting the contact structure $\xi$ on the contact manifold $(Y,\xi)$.
\end{Th}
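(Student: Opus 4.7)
My plan is to check the three pseudodistance axioms in order: vanishing on the diagonal, symmetry, and the triangle inequality, with the last being the only substantive step. Vanishing on the diagonal follows from noting that the standard embedding of $Y$ as $Y_\alpha\subset SY$ (with $\eta=0$) is a cs-embedding of $(Y,\alpha)$ whose image lies in $W(\alpha)$, witnessing $\alpha\prec\alpha$ with $C=1$ and hence $d_{CBM}(\alpha,\alpha)\le 0$. Symmetry is immediate because the defining condition $\{\alpha\prec C\beta,\ \beta\prec C\alpha\}$ is invariant under swapping $\alpha$ and $\beta$.

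For the triangle inequality, fix $L_1>d_{CBM}(\alpha,\beta)$ and $L_2>d_{CBM}(\beta,\gamma)$, set $C_i=e^{L_i}$, and choose cs-embeddings $\phi_1\colon(Y,\alpha)\hookrightarrow W(C_1\beta)$ and $\phi_2\colon(Y,\beta)\hookrightarrow W(C_2\gamma)$. My first move will be to rescale $\phi_2$ by the standard Liouville flow: since $(L^t)^*\theta=e^t\theta$ and $L^t$ carries $W(\mu)$ onto $W(e^t\mu)$, one checks that $\widetilde\phi_2:=L^{\ln C_1}\circ\phi_2$ is a cs-embedding of $(Y,C_1\beta)$ whose image lies in $W(C_1C_2\gamma)$, with a primitive $\eta_2'$ that is still exact and compactly supported. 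This reduces matters to a transitivity claim: a cs-embedding of $(Y,\alpha)$ into $W(C_1\beta)$ composed with a cs-embedding of $(Y,C_1\beta)$ into $W(C_1C_2\gamma)$ can be upgraded to a cs-embedding of $(Y,\alpha)$ into $W(C_1C_2\gamma)$.

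To carry out this composition I would extend $\widetilde\phi_2$ along the (complete) modified Liouville field $L''$ defined by $i_{L''}\omega=\theta+\eta_2'$, producing an exact-symplectic embedding $\Psi\colon W(C_1\beta)\to SY$ with $\Psi^*(\theta+\eta_2')$ equal to the tautological Liouville primitive on $W(C_1\beta)$. The composition $\psi:=\Psi\circ\phi_1\colon Y\to SY$ then automatically satisfies $\psi^*(\theta+\eta_3)=\alpha$ for an exact, compactly supported $\eta_3$ assembled from $\eta_1$ together with the $\Psi$-correction of $\eta_2'$, so $\psi$ is a cs-embedding of $(Y,\alpha)$.

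The main obstacle I anticipate is verifying $\psi(Y)\subset W(C_1C_2\gamma)$. Outside the compact support of $\eta_2'$ one has $L''=L$, so $\Psi$ reduces to the standard Liouville flow appropriately scaled and the inclusion is automatic; inside the compact support, however, $L''$ can push points across $Y_{C_1C_2\gamma}$ and the naive extension may slightly overshoot. I would absorb the overshoot by exploiting the slack $L_i>d_{CBM}$: the bounded perturbation between trajectories of $L''$ and $L$ over the compact support of $\eta_2'$ can be dominated by enlarging $\gamma$ to $(1+\varepsilon)\gamma$, yielding $\psi(Y)\subset W((1+\varepsilon)C_1C_2\gamma)$ and hence $d_{CBM}(\alpha,\gamma)\le L_1+L_2+\ln(1+\varepsilon)$. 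The symmetric argument produces $\gamma\prec(1+\varepsilon)C_1C_2\alpha$, and letting $\varepsilon\to 0$ together with $L_i\searrow d_{CBM}$ yields the triangle inequality.
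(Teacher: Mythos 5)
Your route is essentially the paper's: non-negativity and symmetry are read off the definition, and the triangle inequality is reduced to a transitivity statement for $\prec$, proved by extending the second cs-embedding to an embedding of the whole domain $W(C_1\beta)$ via the flow of the modified Liouville field $L''$ with $i_{L''}\omega=\theta+\eta_2'$, and then composing with the first cs-embedding. Your preliminary rescaling $\widetilde\phi_2=L^{\ln C_1}\circ\phi_2$ is exactly the scale-invariance of $\prec$ that the paper uses implicitly when it writes $\alpha\prec l\beta\prec lm\gamma$, and your claim that $\Psi^*(\theta+\eta_2')$ is the tautological primitive on $W(C_1\beta)$ is precisely the content of the paper's lemma ($F^*(\theta+\eta)=r\beta$), proved there by matching the two Liouville flows; asserting it ``automatically'' is acceptable but that computation is the part you would still have to write out.

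The genuine gap is in your treatment of the containment $\Psi\circ\phi_1(Y)\subset W(C_1C_2\gamma)$. You correctly locate the danger: backward $L''$-trajectories emanating from $\widetilde\phi_2(Y)$ can cross the graph of $C_1C_2\gamma$ only at points of $\operatorname{supp}\eta_2'$. But the resulting overshoot is not ``slight'': its size is governed by how far $\operatorname{supp}\eta_2'$ protrudes beyond the hypersurface $Y_{C_1C_2\gamma}$, and this is a quantity fixed by the given cs-embedding $\phi_2$, not something that tends to $0$; the slack $L_i>d_{CBM}$ gives you no control over $\eta_2'$ whatsoever. Consequently the claim that the escape can be dominated by enlarging $\gamma$ to $(1+\varepsilon)\gamma$ for \emph{arbitrarily small} $\varepsilon$ is unjustified, and the final limit $\varepsilon\to 0$, which is exactly what the triangle inequality needs, is not available. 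The paper's argument does not use an $\varepsilon$ at this point: its extension lemma asserts that $F$ maps $W(\beta)$ into the region $W((\psi^*)^{-1}\beta)$ bounded by the cs-embedded copy of the middle manifold, which is then claimed to lie inside $W(\gamma)$. So to complete your proof along your quantitative lines you would need an actual bound forcing the image of the extension (or at least of $\phi_1(Y)$ under it) to stay below $Y_{C_1C_2\gamma}$ --- for instance an argument that the relevant portion of the backward flow region is the component of $SY\setminus\widetilde\phi_2(Y)$ on the inner side --- rather than an $\varepsilon$-enlargement that cannot be made small.
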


\begin{proof}
We have to show non-negativity, symmetry and the triangle inequality.
The distance is by definition non-negative. Symmetry is also immediate from the definition. \par
We show in claim \ref{trans} that $\prec$ is transitive. Using this, the triangle inequality can be shown as follows. We have
$$d_{CBM}(\alpha,\beta)=\inf\{\ln(l)\mid \alpha \prec l \beta, \beta \prec l \alpha\}=\ln(L)$$ and
$$d_{CBM}(\beta,\gamma)=\inf\{\ln(m)\mid \gamma \prec m \beta, \beta \prec m \gamma\}=\ln(M)$$ ~\\
By transitivity, if $\alpha \prec l\beta \prec lm \gamma$ and $\gamma \prec m\beta \prec lm \alpha$  we obtain $\alpha \prec lm \gamma$ and $\gamma \prec lm \alpha$

\begin{gather*}
d_{CBM}(\alpha,\gamma)=\inf\{\ln(C)\mid \alpha \prec C \gamma, \gamma \prec C \alpha\} \leq \ln(LM) \\ =\ln(L)+\ln(M)=d_{CBM}(\alpha,\beta)+d_{CBM}(\beta,\gamma)
\end{gather*}

\end{proof}

\begin{Rem}
The distance captures dynamical information and degenerates as one expects, since two strictly contactomorphic manifolds have distance 0. Indeed, it is clear that if two contact manifolds $(Y,\alpha)$ and $(Y,\beta)$ are strictly contactomorphic, i.e. there exists a diffeomorphism $f:Y \rightarrow Y$ such that $f^*(\beta)=\alpha$, the embeddings $\phi:Y \rightarrow SY$ and $\phi \circ f :Y \rightarrow SY$ yield $d_{CBM}(Y_\alpha,Y_\beta)=0$
\end{Rem}

\begin{Claim}\label{trans}
$\prec$ is transitive.
\end{Claim}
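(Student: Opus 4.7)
The plan is to realize $\alpha\prec\gamma$ by extending the cs-embedding $\phi_2$ witnessing $\beta\prec\gamma$ from the hypersurface $Y_\beta$ to a symplectic embedding of the whole region $W(\beta)$ into $SY$, and then precomposing with the cs-embedding $\phi_1$ witnessing $\alpha\prec\beta$. Let $\eta_1,\eta_2\in\Omega^1(SY)$ be the exact compactly supported $1$-forms with $\phi_1^*(\theta+\eta_1)=\alpha$ and $\phi_2^*(\theta+\eta_2)=\beta$, and let $L_{\eta_2}$ be the Liouville vector field defined by $i_{L_{\eta_2}}d\theta=\theta+\eta_2$; compact support of $\eta_2$ ensures that $L_{\eta_2}$ coincides with the standard Liouville field $L$ outside a compact set and is therefore complete.

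First I would invoke the Weinstein-type neighborhood theorem for the contact-type hypersurface $\phi_2(Y)\subset(SY,\theta+\eta_2)$: the Liouville flow of $L_{\eta_2}$ from $\phi_2(Y)$ produces a symplectic embedding
\[
\tilde\phi_2:(W(\beta),\,d(s\beta))\longrightarrow (SY,\,d\theta)
\]
extending $\phi_2$ from $Y_\beta$ to all of $W(\beta)\cong(0,1]_s\times Y$, with $\tilde\phi_2^*\theta=s\beta+df'$ for some smooth function $f'$ on $W(\beta)$ (the exact correction accounts for the fact that $i_{L_{\eta_2}}(\theta+\eta_2)$ need not vanish). Completeness of $L_{\eta_2}$ and its agreement with the standard radial scaling outside the compact set $\mathrm{supp}(\eta_2)$ are what let one globalize the local Weinstein model to all of $W(\beta)$. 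Writing $\eta_1=df_1$, I would then set $\phi_3:=\tilde\phi_2\circ\phi_1$ and take $\eta_3:=dh$, where $h$ is any compactly supported extension to $SY$ of the function $(f_1-f')\circ\tilde\phi_2^{-1}$ on the compact set $\phi_3(Y)$; unwinding the pullbacks via $\tilde\phi_2^*\theta=s\beta+df'$ and $\phi_1^*(\theta+\eta_1)=\alpha$ then yields $\phi_3^*(\theta+\eta_3)=\alpha$, so $\phi_3$ is a cs-embedding of $(Y,\alpha)$.

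The main obstacle is the remaining inclusion $\phi_3(Y)=\tilde\phi_2(\phi_1(Y))\subset W(\gamma)$, which since $\phi_1(Y)\subset W(\beta)$ reduces to $\tilde\phi_2(W(\beta))\subset W(\gamma)$; equivalently, the backward $L_{\eta_2}$-orbit of every point of $\phi_2(Y)$ must stay in $W(\gamma)$. Outside $\mathrm{supp}(\eta_2)$ this is immediate, since $L_{\eta_2}=L$ and $W(\gamma)=\{c\gamma:c\in(0,1]\}$ is backward-invariant under the standard radial Liouville scaling. Inside $\mathrm{supp}(\eta_2)$ orbits could in principle escape $W(\gamma)$, so to close the argument I would exploit that the condition $\phi_2^*(\theta+\eta_2)=\beta$ constrains $\eta_2$ only along $\phi_2(Y)$: writing $\eta_2=df$ for a compactly supported $f$ and choosing a cutoff $\chi$ equal to $1$ on a small neighborhood of $\phi_2(Y)\subset\mathrm{int}\,W(\gamma)$ but supported in a slightly larger neighborhood still contained in $\mathrm{int}\,W(\gamma)$, the replacement $\eta_2':=d(\chi f)$ is exact and compactly supported, agrees with $\eta_2$ on $\phi_2(Y)$ (so $\phi_2$ remains a cs-embedding with respect to $\theta+\eta_2'$), and has $\mathrm{supp}(\eta_2')\subset\mathrm{int}\,W(\gamma)$. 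For this $\eta_2'$, each backward $L_{\eta_2'}$-trajectory of a point of $\phi_2(Y)$ either lies in $\mathrm{supp}(\eta_2')\subset W(\gamma)$ or, after exiting, coincides with standard radial scaling and so remains in $W(\gamma)$, yielding $\alpha\prec\gamma$.
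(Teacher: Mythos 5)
Your construction is essentially the paper's own: the extension $\tilde\phi_2$ of $\phi_2$ over $W(\beta)$ by the Liouville flow of $\theta+\eta_2$ is exactly the map $F$ of the lemma preceding the claim (defined there by $F(L_{r\beta}^{\ln t}s_\beta(y))=L_{\theta+\eta_2}^{\ln t}(\phi_2(y))$), and your $\phi_3=\tilde\phi_2\circ\phi_1$ is the paper's $\Phi=F\circ\phi$; your bookkeeping producing $\eta_3=dh$ is a more explicit version of the paper's choice of $\eta_3$ with $F^*\eta_3=\eta_1$. One side remark is off: $i_{L_{\eta_2}}(\theta+\eta_2)$ \emph{does} vanish (a Liouville form always annihilates its own Liouville field), and in fact $\tilde\phi_2^*(\theta+\eta_2)=s\beta$ exactly; your identity $\tilde\phi_2^*\theta=s\beta+df'$ is nevertheless correct, with $df'=-\tilde\phi_2^*\eta_2$, so the rest of that step goes through.

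The one genuine (if minor) gap is in the cutoff step used to force $\tilde\phi_2(W(\beta))\subset W(\gamma)$: the definition of $\beta\prec\gamma$ only gives $\phi_2(Y)\subset W(\gamma)$, not $\phi_2(Y)\subset\mathrm{int}\,W(\gamma)$, so if $\phi_2(Y)$ touches the hypersurface $\{s=1\}=Y_\gamma$ there is no neighbourhood of $\phi_2(Y)$ inside $\mathrm{int}\,W(\gamma)$ on which to take $\chi\equiv 1$, and $\mathrm{supp}(d(\chi f))$ may then protrude above $\{s=1\}$, where your trajectory argument no longer controls the backward orbits. The cutoff is in fact unnecessary: since $L_{\theta+\eta_2}$ is transverse to the closed connected hypersurface $\phi_2(Y)$ and points toward the positive end of $SY$ along it, a backward trajectory starting on $\phi_2(Y)$ can only ever cross $\phi_2(Y)$ inward, hence remains in the closure of the component of $SY\setminus\phi_2(Y)$ containing the negative end; and because $\{s>1\}$ is connected, contains the positive end, and is disjoint from $\phi_2(Y)$, that component is contained in $\{s\leq 1\}=W(\gamma)$. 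With this separation argument in place of (or supplementing) your cutoff, your proof is complete and coincides with the paper's.
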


For the proof we will need the following lemma. We give the following definition for notational convenience. We know that a diffeomorphism $\phi$ of a manifold $Y$ induces a map $F_\phi$ on $T^*Y$ given by $F_\phi(x,p)=(\phi(x),(\phi^{-1})^*p)$. Then we define $W(\phi_*\beta):=F_\phi(W(\beta))=W((\phi^{-1})^*\beta)$.

\begin{Lemma}
Consider $W(\beta)$ and a cs-embedding $\psi:(Y,\beta)\rightarrow (SY,d(\theta+\eta))$, for an exact one form $\eta$, compactly supported in a neighborhood of $\psi(Y)$, such that the Liouville vector field $L_{\theta+\eta}$ associated to $\theta+\eta$ is transverse to $\psi(Y)$. Then, we have an embedding $F:W(\beta)\rightarrow W((\psi^*)^{-1}\beta) \subseteq (SY,d(\theta+\eta))$ with $F^*(\theta+\eta)=r\beta$.
\end{Lemma}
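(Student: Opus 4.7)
The natural candidate for $F$ is the map obtained by flowing $\psi(Y)$ backward along the Liouville vector field $L := L_{\theta+\eta}$. Parametrizing $W(\beta) \subset S_\beta Y$ as $(0,1] \times Y$ with coordinate $r$, I would set
\[
F(r, y) := \phi^{\ln r}_L(\psi(y)),
\]
so that $F(1, y) = \psi(y)$ and $r < 1$ corresponds to flowing for negative time $\ln r < 0$.

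The first step is to verify that $F$ is well-defined, i.e.\ that the flow of $L$ starting from any point of $\psi(Y)$ exists for all nonpositive times. Since $\eta$ is compactly supported in a neighborhood of $\psi(Y)$, the vector field $L$ agrees with the standard Liouville field $L_\theta$ outside a compact set, and the backward flow of $L_\theta$ is complete (it contracts the cotangent fibers toward the zero section). Combined with compactness of $\psi(Y)$, every backward trajectory eventually exits the support of $\eta$ and thereafter follows the complete standard flow, so $F$ is defined on all of $(0,1] \times Y$. Local injectivity and immersiveness of $F$ follow from the transversality of $L$ to $\psi(Y)$. The one delicate point is global injectivity: I would rule out recurrence by exploiting the compact support of $\eta$ together with the strict radial decrease of the standard Liouville flow in backward time --- once a trajectory leaves the support of $\eta$, its $r$-coordinate decreases monotonically and cannot re-enter the compact set $\psi(Y)$. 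I expect this to be the main obstacle of the proof.

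The pullback identity $F^*(\theta+\eta) = r\beta$ is then a routine Liouville-flow computation. Two ingredients: first, $(\theta+\eta)(L) = \omega(L, L) = 0$; second, since $d\eta = 0$, Cartan's formula yields $\mathcal{L}_L(\theta+\eta) = \theta+\eta$, so $(\phi^t_L)^*(\theta+\eta) = e^t(\theta+\eta)$. Decomposing a tangent vector at $(r,y) \in W(\beta)$ as $a\partial_r + w$ with $w \in T_yY$, and using $F_*\partial_r = \tfrac{1}{r} L|_{F(r,y)}$ together with $\psi^*(\theta+\eta)=\beta$, one computes
\begin{align*}
F^*(\theta+\eta)|_{(r,y)}(a\partial_r + w) &= \tfrac{a}{r}(\theta+\eta)(L) + \bigl((\phi^{\ln r}_L)^*(\theta+\eta)\bigr)(\psi_* w) \\
&= 0 + r\,(\psi^*(\theta+\eta))(w) = r\beta(w).
\end{align*}

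Finally, by construction the image $F(W(\beta))$ is the region of $SY$ swept out by the backward $L$-flow of $\psi(Y)$, which under the extension of the notation $W(\cdot)$ recorded just before the lemma is precisely $W((\psi^*)^{-1}\beta)$. Once the three items above (completeness of the backward flow, global injectivity, and the pullback computation) are secured, the conclusion follows immediately.
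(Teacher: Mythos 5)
Your proposal is essentially the paper's own proof: the paper defines $F$ by $F(L_{r\beta}^{\ln t}s_\beta(y)) = L_{\theta+\eta}^{\ln t}(\psi(y))$, which in the splitting $S_\beta Y = \mathbb{R}_+\times Y$ is exactly your $F(r,y)=\phi_L^{\ln r}(\psi(y))$, and it verifies $F^*(\theta+\eta)=r\beta$ by the same decomposition into the Liouville direction ($F_*L_{r\beta}=L_{\theta+\eta}$) and the level hypersurfaces (conformal expansion $e^t$ of $\theta+\eta$ under the flow together with $\psi^*(\theta+\eta)=\beta$), before restricting to $t\leq 1$. The only difference is that you flag completeness of the backward flow and global injectivity as points needing care, which the paper leaves implicit; your proposed use of the compact support of $\eta$ and the standard radial behavior outside it is the right way to settle them.
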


\begin{proof}
Let $\psi:(Y,\beta) \rightarrow (SY,d(\theta +\eta))$ be the assumed cs-embedding, i.e. $\psi^*(\theta+\eta)=\beta$. As before, we denote the flow for time $t$ of the Liouville field $L_\alpha$ associated to the primitive $\alpha$ by $L_\alpha^t$. Define $F: (SY,r\beta) \rightarrow (SY,d(\theta + \eta))$ by

$$F(L_{r\beta}^{\ln(t)}s_{\beta}(y))=L_{\theta+\eta}^{\ln(t)}(\psi(y)), \qquad \forall y\in Y, t\in \R$$

where $s_\beta$ the standard embedding of $Y$ as the graph of the form $\beta$ in its symplectization and $L_{\alpha}^{\ln(t)}$ the Liouville flow in the symplectization $(SY,d\alpha)$. We show $F^*(\theta+\eta)=r\beta$. The tangent spaces in the source and target symplectizations split as $\langle L_{r\beta}\rangle \oplus\langle R_{r\beta} \rangle \oplus \ \xi$ and $\langle L_{\theta+\eta}\rangle \oplus \langle R_{\theta+\eta}\rangle \oplus \ \xi$. $L_\alpha$ denotes the Liouville vector field of $(SY,d\alpha)$.\\ \par

It will be enough to show that $F_*(L_{r\beta})=(L_{\theta+\eta})$ and $\forall t$ and $\Phi_s:=L_{\theta+\eta}^{\ln(t)}\circ \psi \circ s_\beta^{-1}\circ L_{r\beta}^{-\ln(t)}: L_{r\beta}^{\ln(t)}(s_{\beta}(y)) \rightarrow {L}_{\theta+\eta}^{\ln(t)}(\psi(y))$ we have $\Phi_t^*(\theta+\eta)=r\beta$, i.e. for fixed $t$, the hypersurfaces  $\big\{L_{r\beta}^{\ln(t)}(s_{\beta}(y))\big|y\in Y\big\}$ and $\big\{L_{\theta+\eta}^{\ln(t)}(\psi(y))\big|y\in Y\big\}$ are strictly contactomorphic.\\

For the first statement we have, \newline
$F_*((L_{r\beta})_{L_{r\beta}^{\ln(\tau)}s_\beta(y)})=\frac{d}{dt}\big|_{t=0}F(L_{r\beta}^{\ln(t)}(L_{r\beta}^{\ln(\tau)}s_\beta(y))=\frac{d}{dt}\big|_{t=0}F(L_{r\beta}^{\ln(t)\ln(\tau)}s_\beta(y))=\\ \frac{d}{dt}\big|_{t=0}F(L_{r\beta}^{\ln(t+\tau)}s_\beta(y))=\frac{d}{dt}\big|_{t=0}F(L_{\theta+\eta}^{\ln(t+\tau)}\psi(y))=(L_{\theta+\eta})_{F(L_{r\beta}^{\ln(\tau)}s_\beta(y))}$

~\\
For the second statement we have, \newline
$\Phi_t^*(\theta+\eta)=(L_{r\beta}^{-\ln(t)})^* \circ  (s_\beta^{-1})^* \circ \psi^*(t(\theta+\eta))=(L_{r\beta}^{-\ln(t)})^*(t r\beta)=r\beta$ \\ \par
Restricting $F$ to $Y \times \{t\leq 1\}$ yields the required embedding.
\end{proof}

\par
 
We prove claim \ref{trans}, namely that $\prec$ is transitive.

\begin{proof} 
Let $\alpha \prec \beta$ and $\beta \prec \gamma$. We show $\alpha \prec \gamma$. The assumption means that there exist embeddings $\phi:(Y,\alpha) \rightarrow (SY,d(r\beta)$ and $\psi:(Y,\beta) \rightarrow (SY,d(r\gamma)$ such that $\phi(Y)\subset W(\beta)$, $\psi(Y)\subset W(\gamma)$, $\phi^*(r\beta+\eta_1)=\alpha$ and $\psi^*(r\gamma+\eta_2)=\beta$ for two compactly supported exact one forms $\eta_1,\eta_2$. Let $F$ be the map defined in the previous lemma. Under $F$, $\phi(Y)$ maps into  $W((\psi^*)^{-1}\beta)$ and in particular into $W(\gamma)$. Moreover, setting $\Phi:=F\circ \phi:(Y,\alpha)\rightarrow (SY,d(r\gamma))$, we have a cs-embedding of $(Y,\alpha)$ in $(SY,d(r\gamma))$. Indeed, $\Phi(Y)\subseteq W((\psi^*)^{-1}\beta) \subseteq W(\gamma)$ and $\Phi^*(r\gamma+\eta+\eta_3)=(F\circ \phi)^*(r\gamma+\eta+\eta_3)=\phi^*(r\beta+\eta_1)=\alpha$, for any compactly supported exact one form $\eta_3$ such that $F^*(\eta_3)=\eta_1$. Thus, we have shown that $\alpha \prec \gamma$.
\end{proof}

The main result, already described in the introduction, is the following. Fix a co-oriented overtwisted contact structure $\xi$ on $Y$. $TY/\xi$ is a trivial line bundle so let $X \in (TY/\xi)^\perp$ be a global section. Consider $$\mathcal{C}_{ot}^{Y,\xi}=\{\alpha \in T^*Y| \alpha(X)>0, \ \alpha \wedge (d\alpha)^{n} \neq 0, \ ker(\alpha)=\xi\}$$
Moreover, the norm $||\cdot||_{\infty}$ on $\mathbb{R}^2$ induces a metric on $\R^2$ and in particular the half-space $\mathbb{H}=\{(x,y)\in \R^2 \mid y<0\}$. We denote it by $d_\infty$. 

\begin{Th}\label{quasiisometricembedding}
Let $(Y,\xi=ker(\alpha))$ be an overtwisted closed contact manifold. There exists a bi-Lipschitz embedding $(\mathbb{H},d_{\infty}) \rightarrow (\mathcal{C}_{ot}^{Y,\xi},d_{CBM})$
\end{Th}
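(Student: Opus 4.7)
The plan is to construct an explicit two-parameter family $F\colon \mathbb{H}\to\mathcal{C}_{ot}^{Y,\xi}$ whose image is pinned down by two independent numerical invariants that behave Lipschitz-continuously (in logarithmic form) with respect to $d_{CBM}$, and then to show that the $d_\infty$-geometry of $\mathbb{H}$ is recovered exactly by these invariants.

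\textbf{Construction of the family.} Starting from a reference overtwisted contact form $\alpha_0$ supporting $\xi$, I would apply the modified Wendl construction of Section \ref{Wendlwork} (or the Bourgeois--Van Koert open-book version of \cite{MR2646902} in higher dimensions) to produce a one-parameter family $\{\beta_t\}_{t\in(0,T_0)}$ of overtwisted contact forms on $(Y,\xi)$ in which the distinguished Reeb orbit bounding the unique pseudoholomorphic plane has action exactly $t$. By the Leibniz-rule argument in the introduction, this equals the vanishing level $l(\beta_t)=t$ of the unit, i.e.\ the right endpoint of the largest finite bar in the contact-homology barcode. The construction can be arranged so that the volume $V(\beta_t):=\int_Y \beta_t\wedge(d\beta_t)^{n-1}$ is a fixed constant $V_0$ independent of $t$. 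Then set $F(x,y):=e^{x}\,\beta_{T_0 e^{y}}$; the constraint $y<0$ encodes precisely the admissible range $t<T_0$ of the Wendl parameter, so $\mathbb{H}$ is the natural parameter space.

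\textbf{Upper bound (Lipschitz in one direction).} For $p=(x_1,y_1)$, $q=(x_2,y_2)$, I would prove $d_{CBM}(F(p),F(q))\leq C\,d_\infty(p,q)$ by interpolating separately in each coordinate and invoking the triangle inequality from Theorem \ref{pseudodistance}. Rescaling is harmless: the standard embedding realizes $e^{x_1}\beta_t\prec e^{s}\cdot e^{x_2}\beta_t$ for $s=|x_2-x_1|$. For the $y$-coordinate, the explicit local nature of the Wendl modification allows a cs-embedding $(Y,\beta_{t_1})\hookrightarrow SY$ whose image lies inside $W(c\,\beta_{t_2})$ for $c$ comparable to $\max(t_1,t_2)/\min(t_1,t_2)$, realized by a compactly supported Hamiltonian isotopy (cf.\ Remark \ref{HamRem}) transporting the modified Darboux neighborhood at parameter $t_1$ to the one at parameter $t_2$. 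Then $\ln c$ is comparable to $|y_1-y_2|$.

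\textbf{Lower bound and main obstacle.} Here I would extract two invariants of $d_{CBM}$. First, if $\alpha\prec C\beta$ via a cs-embedding $\phi$, then $\phi(Y_\alpha)$ sits as a starshaped hypersurface in $W(C\beta)$, and a Stokes computation (using that the defect form $\eta$ in Definition \ref{contactembedding} is exact, so it contributes nothing to the integral of $\theta\wedge(d\theta)^{n-1}$) yields $V(\alpha)\leq C^{n}V(\beta)$, whence $|\ln V(F(p))-\ln V(F(q))|\leq n\,d_{CBM}(F(p),F(q))$. Second, the cs-embedding $\phi$ induces a symplectic cobordism whose continuation map on filtered contact-homology algebras is unital and shifts the action filtration by the conformal factor $C$; applied to the unit this gives $l(\alpha)\leq C\,l(\beta)$, hence $|\ln l(F(p))-\ln l(F(q))|\leq d_{CBM}(F(p),F(q))$. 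Plugging in $\ln V(F(x,y))=nx+\ln V_0$ and $\ln l(F(x,y))=x+y+\ln T_0$, these inequalities read $n|x_1-x_2|\leq n\,d_{CBM}$ and $|(x_1+y_1)-(x_2+y_2)|\leq d_{CBM}$, which combine to $\max(|x_1-x_2|,|y_1-y_2|)\leq 2\,d_{CBM}(F(p),F(q))$. The hard part is the second invariant: one must verify that the filtered contact-homology algebra in the overtwisted setting is a well-defined persistence module, that the cobordism continuation map is unital and shifts actions by exactly the scaling factor, and — most delicately — that the Wendl construction really does pin down the vanishing level of the unit as the prescribed action $t$. This last point is what makes Section \ref{Wendlwork} the technical heart of the argument.
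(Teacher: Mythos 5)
Your proposal takes essentially the same route as the paper: the two invariants (contact volume and the vanishing level $l$ of the unit in filtered contact homology), their monotonicity under $\prec$ together with their scaling homogeneity for the lower bound, an interpolation-plus-triangle-inequality argument for the upper bound, and the Wendl (resp.\ Bourgeois--Van Koert) construction to pin down $l$ exactly, which is indeed the technical heart (Section \ref{Wendlwork}). The only differences are cosmetic (you keep volume fixed along a one-parameter family and rescale, while the paper uses the compensating bump function $\nu_l$ and the intermediate point in inequality (\ref{keyinequality})), and the step you assert as ``$\ln c$ comparable to $|y_1-y_2|$'' is exactly what the paper's Gray-stability/Moser estimate, with its carefully designed path $(h_1(r),h_{2,u}(r))$, is there to prove.
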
 \vspace{4pt} \par

Note that there is no assumption on the dimension of the contact manifold $Y$. The main tool used in the proof of this theorem in 3 dimensions is the Lutz twist which is recalled before the proof of this theorem in subsection \ref{Lutztwist}. Overtwisted contact structures in 3 dimensions are classified by the homotopy type of the plane field $\xi$ and full Lutz twists do not alter the homotopy type of the plane field we start with. So, we can modify the overtwisted contact form representing the contact structure $\xi$ and consequently the dynamics on $Y$ without changing the contact structure. In dimensions higher than 3, the situation is similar for reasons that will be explained in section \ref{Higherdim}. The classification in higher dimensions is again homotopy theoretic as was recently shown in \cite{MR3455235}, yet there is some subtlety involved when generalizing the Lutz twist. This generalization is proposed in \cite{EP1},\cite{EP2} by Etnyre and Pancholi. There are drawbacks though with the main one being that half Lutz twists do not behave as expected (e.g. they alter the diffeomorphism type of the original manifold). Moreover, the pseudoholomorphic curve analysis is quite involved. Instead of using the generalization of the Lutz twist in higher dimensions, we will mostly follow the negatively stabilized open book decomposition approach as in \cite{MR2646902}. This is because the pseudoholomorphic curve analysis is already carried out. We explain this in subsection \ref{Lutztwist}.

\begin{Rem}
Since any two norms on a finite dimensional vector space are equivalent, we could have chosen to use any norm. For convenience we choose $||\cdot||_\infty$. It turns out that the proof shows something slightly stronger than the statement of the theorem.
\end{Rem} 

The proof of this theorem in 3 dimensions will be provided in section \ref{pf}, as we will need first to recall some basic tools, in order to modify contact forms and prove the existence of a certain unique pseudoholomorphic disk bounded by a Reeb orbit. The extension to higher dimensions will be given in section \ref{Higherdim}.

It is clear that any two contactomorphic manifolds are finite distance apart. Thus, the question that arises is, can we provide upper bounds on $d_{CBM}$?  A more elaborate answer is provided in terms of the bi-Lipschitz embedding theorem \ref{quasiisometricembedding}, yet a straightforward answer is given with respect to the positive function $f$ which relates the two forms.

\begin{Prop}\label{UB}
Let $(Y,\alpha)$, $(Y,\beta)$ two closed contactomorphic contact manifolds (i.e. $\beta=f\alpha$ for some smooth $f:Y\rightarrow \mathbb{R}_+$). Then, $$d_{CBM}(\alpha,\beta) \leq \max\{\ln(\max(f)),-\ln(\min(f))\}$$

\end{Prop}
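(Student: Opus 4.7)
My plan is to directly exhibit cs-embeddings realizing the two conditions in Definition \ref{dcbmdefn}. Since $\beta = f\alpha$, I expect the standard embeddings $s_\alpha, s_\beta \colon Y \hookrightarrow SY$ (sending $y$ to the covector $\alpha_y$, respectively $\beta_y$, inside the choice-free model $SY \subset T^*Y$) to suffice, so that no nontrivial exact 1-form $\eta$ is needed and one can take $\eta = 0$ in Definition \ref{contactembedding}. The tautological property of the canonical $1$-form $\theta$ on $T^*Y$ gives $s_\alpha^*\theta = \alpha$ and $s_\beta^*\theta = \beta$ immediately, so these really are cs-embeddings.

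Set $C_1 = 1/\min f$ and $C_2 = \max f$, and let $C = \max\{C_1, C_2\}$. First I would check $\alpha \prec C\beta$ by showing $s_\alpha(Y) \subset W(C\beta)$. Unpacking the choice-free definition of $W(C\beta)$, the condition at the point $s_\alpha(y) = \alpha_y$ reads
\[
0 < \alpha_y(v) \leq C\beta_y(v) = C f(y)\alpha_y(v) \quad \text{for every } v \in T_yY \text{ with } \beta_y(v) > 0.
\]
Since $f > 0$, the sign condition $\beta_y(v) > 0$ is the same as $\alpha_y(v) > 0$, and the inequality reduces to $Cf(y) \geq 1$. This holds for all $y$ precisely when $C \geq 1/\min f$, which is guaranteed by our choice of $C$. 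Symmetrically, $\beta \prec C\alpha$ amounts to $s_\beta(y) = f(y)\alpha_y$ satisfying $f(y)\alpha_y(v) \leq C\alpha_y(v)$ on $\alpha_y$-positive vectors, i.e.\ $C \geq \max f$, again ensured by our choice of $C$.

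Combining the two, the infimum defining $d_{CBM}(\alpha,\beta)$ is at most $\ln C = \max\{\ln(\max f), -\ln(\min f)\}$, which is the claim. There is no real obstacle here: the whole argument rests on the observation that when two contact forms differ by a positive conformal factor one never has to deform the embedding in the symplectization at all, and the partial order $\prec$ collapses in this special case to a pointwise comparison of contact forms. The only subtlety worth flagging in the write-up is that the symplectization must be interpreted in the choice-free sense of Definition \ref{ChoiceFree}, so that $s_\alpha$ and $s_\beta$ both land in the same ambient manifold $SY$ and the comparison of $\alpha_y$ with $Cf(y)\alpha_y$ as covectors is literal rather than happening through some identification of two different splittings $\R_+ \times Y$.
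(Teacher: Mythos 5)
Your proposal is correct and is essentially the paper's own argument: both use the standard graph embeddings with $\eta=0$ and reduce each relation $\alpha \prec C\beta$, $\beta \prec C\alpha$ to the pointwise inequalities $C \geq 1/\min f$ and $C \geq \max f$, yielding $C=\max\{\max f, 1/\min f\}$. The only cosmetic difference is that you work in the choice-free model $SY\subset T^*Y$, whereas the paper phrases the same comparison in the splitting $\mathbb{R}_+\times Y$.
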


\begin{proof}
We embed $(Y,\alpha)$ into its symplectization SY in the standard way, i.e. $\psi((Y,\alpha))=\{1\}\times Y$. Using $\psi$, $(Y,\beta)$ embeds as $\psi((Y,\beta))=(f,Y)$. A value for $k$ that will provide $\alpha \prec k\beta$ is $\max\{\frac{1}{min(f)},1\}$.~\\

Now, if we embed $(Y,\beta)$ in the standard way, i.e. $\phi((Y,\beta))=\{1\}\times Y$, then $(Y,\alpha)$ embeds as $\phi((Y,\alpha))=(\frac{1}{f},Y)$. A value for k that will provide $\beta \prec k\alpha$ is $\max\{\frac{1}{\min(\frac{1}{f})},1\}=\max\{\max(f),1\}$.~\\

Note that a value for k that works for both directions is $\max\big\{\max(f),\frac{1}{\min(f)}\big\}$ since this is always $\geq 1$. Taking logarithms as in the definition of $d_{CBM}$ we have $$\max\Big\{\ln(\max(f)),\ln(\frac{1}{\min(f)})\Big\}=\max\big\{\ln(\max(f)),-\ln(\min(f))\big\}$$

\end{proof}

 We remark that cleverer embeddings may provide sharper upper bounds for the distance. We provide such an example using symplectic folding. This is example \ref{folding}.~\\
 
 The proof of the above also works in the case of the so called conformal factor distance (to be defined shortly), yielding upper bounds. The conformal factor distance can be defined both on the space of contact forms supporting the same contact structure $\xi$ on Y and on the contactomorphism group $Cont^+(Y,\xi)$ of contactomorphisms preserving the co-orientation of $\xi$. We begin with the former.~\\
 
 If $(Y,\alpha)$ and $(Y,\beta)$ are contactomorphic, then there exist $\phi:Y\rightarrow Y$ and a smooth function $f:Y \rightarrow \R_+$ such that $\phi^*\alpha=f \beta$. On the other hand, if $\alpha$, $\beta$ support the same contact structure, then $Y_\alpha$ and $Y_\beta$ with the contact structures induced by the Liouville vector field in the symplectization, are of course contactomorphic. This can be seen from \cite{Cie-Eli}, Lemma 11.4, which states 
 
 \begin{Lemma}\label{holonomy}
 Let $\Sigma,\widetilde{\Sigma}$ be hypersurfaces in a Liouville manifold $(V,\omega,X)$ such that following the trajectories of $X$ defines a diffeomorphism $\Gamma:\Sigma\rightarrow\widetilde{\Sigma}$. Then $\Gamma$ is a contactomorphism for the contact structures induced by $i_X\omega$.
 \end{Lemma}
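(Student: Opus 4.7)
The plan is to observe that the contact form $\alpha := i_X\omega$ on $V$ is conformally expanded by the Liouville flow, and to promote this global property to the diffeomorphism $\Gamma$. The key identity is the Cartan-type computation
\begin{equation*}
\mathcal{L}_X \alpha = \mathcal{L}_X (i_X\omega) = i_X \mathcal{L}_X \omega + i_X i_X d\omega = i_X\omega = \alpha,
\end{equation*}
using $\mathcal{L}_X \omega = \omega$ (definition of Liouville field, via $d\omega = 0$ and Cartan's magic formula) and $i_X i_X = 0$. Integrating this gives $(\phi^t)^*\alpha = e^t \alpha$ wherever the flow $\phi^t$ of $X$ is defined. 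Restricted to a hypersurface transverse to $X$, $\alpha$ is a contact form (this is exactly what $i_X \omega$ induces on $\Sigma$, $\widetilde\Sigma$), and the assertion of the lemma is that $\Gamma$ preserves the kernels of these forms.

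Next I would express $\Gamma$ explicitly: there is a smooth transit time function $T : \Sigma \to \R$, determined by the implicit condition $\phi^{T(p)}(p) \in \widetilde\Sigma$, and $\Gamma(p) = \phi^{T(p)}(p)$. Smoothness of $T$ follows from the implicit function theorem together with the transversality of $X$ to $\widetilde\Sigma$. For $v \in T_p\Sigma$, differentiating gives
\begin{equation*}
d\Gamma|_p(v) = dT|_p(v)\cdot X_{\Gamma(p)} + d\phi^{T(p)}|_p(v),
\end{equation*}
so pulling back $\alpha$ yields
\begin{equation*}
\Gamma^*\alpha|_p(v) = dT(v)\cdot \alpha(X)|_{\Gamma(p)} + \alpha|_{\Gamma(p)}\bigl(d\phi^{T(p)}|_p(v)\bigr).
\end{equation*}

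The crucial cancellation is $\alpha(X) = (i_X\omega)(X) = \omega(X,X) = 0$, which kills the first term despite the fact that the transit time varies along $\Sigma$. The remaining term, by the conformal identity $(\phi^t)^*\alpha = e^t\alpha$ applied pointwise with $t = T(p)$, equals $e^{T(p)}\alpha|_p(v)$. Therefore
\begin{equation*}
\Gamma^*\bigl(\alpha|_{\widetilde\Sigma}\bigr) = e^{T}\cdot \alpha|_\Sigma,
\end{equation*}
a positive conformal multiple of the contact form on $\Sigma$. This shows $\Gamma$ sends $\xi = \ker(\alpha|_\Sigma)$ to $\widetilde\xi = \ker(\alpha|_{\widetilde\Sigma})$ and is orientation-preserving on the co-orientations, i.e. $\Gamma$ is a (strict-up-to-conformal-factor) contactomorphism.

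The main obstacle I would anticipate is purely a bookkeeping one: justifying that the transit time $T$ is globally well-defined and smooth, which requires that the $X$-trajectory through each point of $\Sigma$ really does meet $\widetilde\Sigma$ exactly once (or at least that $\Gamma$ is defined as stated in the hypothesis), and that the flow exists long enough for this. Once the hypothesis is read carefully — we are given that such a $\Gamma$ exists as a diffeomorphism — the work reduces to the short computation above, and the only honest content is the observation that $\alpha(X) = 0$ makes the variation of transit time harmless.
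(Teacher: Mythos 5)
Your proof is correct, and since the paper simply quotes this statement from Lemma 11.4 of \cite{Cie-Eli} without reproducing an argument, your computation is exactly the standard one behind the cited result: $\mathcal{L}_X(i_X\omega)=i_X\omega$ gives $(\phi^t)^*\alpha=e^t\alpha$, and the variation of the transit time is harmless because $\alpha(X)=\omega(X,X)=0$, yielding $\Gamma^*(\alpha|_{\widetilde\Sigma})=e^T\alpha|_\Sigma$. (Only a cosmetic remark: the identity you write should be $\mathcal{L}_X(i_X\omega)=i_X\mathcal{L}_X\omega - i_Xi_Xd\omega$, but the offending term vanishes identically, so nothing is affected.)
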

 
 \begin{Def}
  If $\alpha$, $\beta$ support the same contact structure on Y, then define $$d_{CF}(\alpha,\beta):=\displaystyle\inf_{\phi}\{\max_Y|\ln(f)| \Big| \phi^*(\alpha)=f\beta \}$$
 \end{Def}
 
 The conformal factor norm on $Cont^{+}(Y,\xi)$ is defined as follows.
 
 \begin{Def}
  Let $\xi=ker(\alpha_0)$ and $\phi \in Cont^{+}(Y,\xi)$ such that $\phi^*\alpha_0=f\alpha_0$. Then, $|\phi|_{CF}:=\displaystyle\max_Y|\ln(f)|$.
 \end{Def}

 \begin{Def}
  The conformal factor distance is $d_{CF}(\phi,\psi):=|\phi^{-1} \psi|_{CF}$
 \end{Def}

\vspace{10pt}

\subsection{The Rosen-Zhang definition of \texorpdfstring{$\mathbf{d_{CBM}}$}{TEXT}} \label{JD} \text{}\\

In this subsection, we briefly recall definitions and results from \cite{2020arXiv200105094R}. The proofs can be found in the relevant reference. We then compare their results to ours. For clarity, we denote Rosen and Zhang's version of the distance by $\Delta_{CBM}$. It appears that, if we restrict the set of contactomorphisms we are working with to the identity component of the contactomorphism group $Cont_0(Y,\xi)$, this definition is similar to the definition of the conformal factor distance between two contact forms that was defined in the previous section. This is the content of proposition \ref{similarity}. Let $(Y,\xi)$ a co-oriented contact manifold. Their strategy is first to define a distance on the space of forms supporting the same contact structure and then make geometric sense out of it. \\ \par

Making a choice of a contact form $\alpha_0$ supporting $\xi$, they view the space of all such contact forms supporting $\xi$ as an orbit space 
$$O_\xi(\alpha_0):=C^\infty(Y,\mathbb{R})\cdot\{\alpha_0\}$$
where the action of an element $f \in C^\infty(Y,\mathbb{R})$ is given by multiplication of the form $\alpha_0$ by $e^f$, i.e. based on a previous remark regarding the effect of the Liouville flow on the contact form in the symplectization, this multiplication is equivalent to flowing for time $t=\ln(e^f)=f$. Note that since $f$ is not constant, this time is not uniform. This will be helpful in the definition of the distance on hypersurfaces of restricted contact type. \\ \par

For any form $\beta$ supporting an isomorphic structure to $\xi$, there is a contactomorphism $\phi_\beta$ such that  $\beta=\phi_\beta^*\alpha_0=e^{f_\beta}\alpha_0$. A partial order is defined on $O_\xi(\alpha_0)$ as follows.

\begin{Def}
 $\alpha \preceq \beta$ iff $f_\alpha \leq f_\beta$ pointwise. 
\end{Def}

 Somehow notationally absurd, but chosen so as to remember that \say{$\preceq$} comes purely by an inequality between conformal factors \say{$\leq$}, we have the following.

\begin{Prop}\label{comparisonineq}
If $\alpha \preceq \beta$, then $\alpha \prec \beta$
\end{Prop}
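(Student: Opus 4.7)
The comparison $\alpha \preceq \beta$ says exactly that the graph of $\alpha$ (viewed as a section of $T^*Y$) lies below the graph of $\beta$ everywhere, which already \emph{is} an inclusion of the appropriate hypersurfaces in the choice-free symplectization. So the strategy is to exhibit the standard graph embedding as the desired cs-embedding with trivial $\eta$, and verify both conditions in Definition \ref{dcbmdefn} by a direct calculation.

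First I will reduce to the relation $\alpha = h\beta$ with $0 < h \le 1$. Since $\alpha = e^{f_\alpha}\alpha_0$ and $\beta = e^{f_\beta}\alpha_0$, the quotient $h := e^{f_\alpha - f_\beta}$ is a smooth positive function on $Y$ with $\alpha = h\beta$, and the hypothesis $f_\alpha \le f_\beta$ gives $h \le 1$ pointwise. Because $\alpha$ and $\beta$ share the same kernel $\xi$ and both are positive on vectors positively transverse to $\xi$, they live in the same symplectization $SY \subset T^*Y$ in the sense of Definition \ref{ChoiceFree}; the splitting $S_\beta Y = \R_+ \times Y$ provided by $\beta$ then identifies $\alpha_y$ with the point $(h(y), y)$.

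Second, I will check that the standard section $\Psi\colon Y \to SY$, $\Psi(y) = \alpha_y$, is a cs-embedding of $(Y,\alpha)$. The tautological property of the canonical $1$-form $\theta$ on $T^*Y$ (the pullback of $\theta$ by any section equals that section itself) yields $\Psi^*\theta = \alpha$, so Definition \ref{contactembedding} is satisfied with the trivially exact and compactly supported choice $\eta = 0$. Equivalently, in the splitting $S_\beta Y$ one has $\Psi^*(r\beta) = h \cdot \beta = \alpha$.

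Third, I will verify the containment $\Psi(Y) \subset W(\beta)$. In the choice-free formulation, given $y \in Y$ and $v \in T_yY$ with $\beta_y(v) > 0$, we have $\Psi(y)(v) = h(y)\beta_y(v)$, which is positive (since $h>0$) and bounded above by $\beta_y(v)$ (since $h \le 1$); this is exactly the defining condition of $W(\beta)$. In the splitting formulation, $\Psi(y) = (h(y), y)$ with $h \le 1$ lies in $\{s \le 1\} = W(\beta)$. Combining both steps gives $\alpha \prec \beta$.

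There is no serious obstacle here: the essential observation is simply that the Rosen--Zhang order $\preceq$ is the special case of $\prec$ in which one uses the standard graph embedding with $\eta = 0$ and no ambient Hamiltonian isotopy. The proposition is therefore little more than an unpacking of definitions together with the tautological property of $\theta$.
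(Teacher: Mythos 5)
Your proof is correct and follows essentially the same route as the paper: both take the standard graph embedding $s_\alpha$ (with $\eta=0$), use the tautological property of $\theta$ to get $s_\alpha^*\theta=\alpha$, and deduce $s_\alpha(Y)\subset W(\beta)$ from the pointwise inequality of conformal factors. Your version merely spells out the verification of the $W(\beta)$ condition in both the choice-free and split formulations, which the paper leaves implicit.
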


\begin{proof}
We can identify the embeddings of $(Y,\alpha)$ and $(Y,\beta)$ into $SY$ with the graphs both $\alpha$ and $\beta$ in $SY$. Since $\alpha \preceq \beta$, i.e. $f_\alpha \leq f_\beta$, we get that the image of $(Y,\alpha)$ under the standard cs-embedding as the graph of the form $\alpha$, which we denote $s_\alpha$, satisfies $s_\alpha(Y)\subset W(\beta)$. Moreover, $s_\alpha^*(\theta)=\alpha$. Thus, both requirements in the definition of a cs-embedding are satisfied for the standard embedding.
\end{proof}

Now we recall definition 1.12 from \cite{2020arXiv200105094R} which is their definition of the Banach-Mazur distance on the space of forms supporting $\xi$. Denote by $Cont_0(Y,\xi)$ the identity component of the contactomorphism group of $(Y,\xi)$.

\begin{Def}
 For any $\alpha,\beta \in O_\xi(\alpha_0)$, we define $$\Delta_{CBM}(\alpha,\beta):=\inf\Big\{\ln C \geq 0 \mid \exists \phi \in Cont_0(Y,\xi) \text{ s.t. } \frac{1}{C}\alpha\preceq \phi^*\beta \preceq C\alpha \Big\}$$
\end{Def}
The condition in the definition is explicitly 
$$f_\alpha-\ln C \leq f_\beta \circ \phi + g_{\phi,\alpha_0} \leq f_\alpha+ \ln C$$
where $\phi^*\beta=e^{g_{\phi,\alpha_0}}\alpha_0$.

All expected properties hold according to the following proposition which is proposition 2.8 in \cite{2020arXiv200105094R}.

\begin{Prop}
For any $\alpha_1,\alpha_2,\alpha_3 \in O_\xi(\alpha_0)$ we have 
\begin{itemize}
    \item $\Delta_{CBM}(\alpha_1,\alpha_2) \geq 0$ and $\Delta_{CBM}(\alpha_1,\alpha_1)=0$
    \item $\Delta_{CBM}(\alpha_1,\alpha_2)=\Delta_{CBM}(\alpha_2,\alpha_1)$
    \item $\Delta_{CBM}(\alpha_1,\alpha_3) \leq \Delta_{CBM}(\alpha_1,\alpha_2) +\Delta_{CBM}(\alpha_2,\alpha_3)$
    \item $\Delta_{CBM}(\phi^*\alpha_1,\psi^*\alpha_2)=\Delta_{CBM}(\alpha_1,\alpha_2)$ for any $\phi,\psi \in Cont_0(Y,\xi)$
\end{itemize}
\end{Prop}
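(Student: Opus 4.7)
The plan is to reduce every property to the pointwise inequality between conformal factors that defines $\preceq$ and then do a straightforward algebraic manipulation. Write any form supporting $\xi$ as $\alpha = e^{f_\alpha}\alpha_0$ and, for $\phi \in Cont_0(Y,\xi)$, write $\phi^*\alpha_0 = e^{g_\phi}\alpha_0$, so that $\phi^*\beta = e^{f_\beta\circ\phi + g_\phi}\alpha_0$. Then $\frac{1}{C}\alpha \preceq \phi^*\beta \preceq C\alpha$ is equivalent to the pointwise bound $|f_\beta\circ\phi + g_\phi - f_\alpha| \leq \ln C$. The two elementary observations I will use throughout are: (a) $\preceq$ is preserved by multiplication by positive constants (the constant shifts both conformal factors by the same $\ln c$), and (b) $\preceq$ is preserved by pullback along any $\chi \in Cont_0(Y,\xi)$ (pullback adds $g_\chi$ and precomposes both sides by $\chi$, neither of which alters the inequality).

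For non-negativity and reflexivity, $\ln C \geq 0$ is built into the infimum, and taking $\phi = \mathrm{id}$ and $C = 1$ gives $\Delta_{CBM}(\alpha_1,\alpha_1) = 0$. For symmetry, given $\phi$ realising $\frac{1}{C}\alpha_1 \preceq \phi^*\alpha_2 \preceq C\alpha_1$, I will pull back by $\phi^{-1} \in Cont_0(Y,\xi)$ using (b); the outer forms become $(\phi^{-1})^*\alpha_1$ (up to the constants $\frac{1}{C},C$, handled by (a)), and the middle term becomes $(\phi^{-1}\circ \phi)^*\alpha_2 = \alpha_2$. Rearranging and relabelling $\psi := \phi^{-1}$ yields $\frac{1}{C}\alpha_2 \preceq \psi^*\alpha_1 \preceq C\alpha_2$, so $\Delta_{CBM}(\alpha_2,\alpha_1) \leq \Delta_{CBM}(\alpha_1,\alpha_2)$, and the opposite inequality is identical.

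For the triangle inequality, take $\epsilon > 0$ and choose $\phi_{12}, \phi_{23} \in Cont_0(Y,\xi)$ with $\frac{1}{C_1}\alpha_1 \preceq \phi_{12}^*\alpha_2 \preceq C_1\alpha_1$ and $\frac{1}{C_2}\alpha_2 \preceq \phi_{23}^*\alpha_3 \preceq C_2\alpha_2$, where $\ln C_i < \Delta_{CBM}(\alpha_i,\alpha_{i+1}) + \epsilon$. Pulling the second chain back by $\phi_{12}$ (using (b)) gives $\frac{1}{C_2}\phi_{12}^*\alpha_2 \preceq (\phi_{23}\circ\phi_{12})^*\alpha_3 \preceq C_2 \phi_{12}^*\alpha_2$. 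Multiplying the first chain by $\frac{1}{C_2}$ and by $C_2$ respectively (using (a)) and splicing, I obtain $\frac{1}{C_1 C_2}\alpha_1 \preceq (\phi_{23}\circ\phi_{12})^*\alpha_3 \preceq C_1 C_2\alpha_1$. Since $\phi_{23}\circ\phi_{12} \in Cont_0(Y,\xi)$ (this is a group), this gives $\Delta_{CBM}(\alpha_1,\alpha_3) \leq \ln C_1 + \ln C_2$, and letting $\epsilon \to 0$ finishes the proof.

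Finally, for the conjugation invariance, given $\phi, \psi \in Cont_0(Y,\xi)$ and a contactomorphism $\chi$ realising the bounds for $(\alpha_1,\alpha_2)$, I will test with $\chi' := \psi^{-1}\circ\chi\circ\phi \in Cont_0(Y,\xi)$ (again a group), which satisfies $(\chi')^*\psi^*\alpha_2 = \phi^*\chi^*\alpha_2$. Pulling the chain $\frac{1}{C}\alpha_1 \preceq \chi^*\alpha_2 \preceq C\alpha_1$ back by $\phi$ yields $\frac{1}{C}\phi^*\alpha_1 \preceq (\chi')^*\psi^*\alpha_2 \preceq C\phi^*\alpha_1$, which gives $\Delta_{CBM}(\phi^*\alpha_1,\psi^*\alpha_2) \leq \Delta_{CBM}(\alpha_1,\alpha_2)$; the reverse inequality follows by applying the same argument to $\phi^{-1}$ and $\psi^{-1}$. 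The only mild subtlety worth being careful about is ensuring all the maps one produces lie in $Cont_0(Y,\xi)$ rather than just in $Cont^+(Y,\xi)$, but this is automatic from $Cont_0(Y,\xi)$ being a subgroup; there is no analytic obstacle anywhere.
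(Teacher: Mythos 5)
Your proof is correct. Note that the paper itself does not prove this proposition at all: it is quoted as Proposition 2.8 of Rosen--Zhang, with the proofs deferred to that reference, so there is no in-paper argument to compare against. Your self-contained argument via conformal factors is the natural one (and essentially the route the cited reference takes): everything reduces to the two observations that $\preceq$, being a pointwise inequality $f_\alpha\leq f_\beta$, is preserved by multiplication by positive constants and by pullback along elements of $Cont_0(Y,\xi)$, together with transitivity and the group structure of $Cont_0(Y,\xi)$; your verifications of symmetry (pulling back by $\phi^{-1}$ and redistributing the constants), of the triangle inequality (composing witnesses and splicing, with the standard $\epsilon$-approximation of the infima), and of the invariance under $\phi,\psi\in Cont_0(Y,\xi)$ (testing with $\psi^{-1}\circ\chi\circ\phi$) are all sound. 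One small point in your favour: you read the paper's condition correctly, namely $\phi^*\alpha_0=e^{g_{\phi,\alpha_0}}\alpha_0$ so that $\phi^*\beta=e^{f_\beta\circ\phi+g_{\phi,\alpha_0}}\alpha_0$, whereas the displayed sentence in the paper (``where $\phi^*\beta=e^{g_{\phi,\alpha_0}}\alpha_0$'') is a slip that would make the stated inequality inconsistent; your interpretation is the intended one.
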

Since all information can be read off of the conformal factors one expects the following.

\begin{Prop}\label{similarity}
$\Delta_{CBM}=d_{CF}$ when restricted to $Cont_0(Y,\xi)$.
\end{Prop}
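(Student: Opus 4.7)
The strategy is to translate both quantities into the same infimum, taken over $\phi \in Cont_0(Y,\xi)$, of the sup-norm of the logarithm of the conformal factor that relates $\alpha$ and $\phi^*\beta$. Once this common reformulation is in place, equality is automatic.

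For the first step I would unpack $\Delta_{CBM}$. Since $\alpha$ and $\phi^*\beta$ are both co-orientation compatible contact forms with kernel $\xi$, there is a unique smooth $h_\phi : Y \to \R_+$ with $\phi^*\beta = h_\phi \alpha$. Writing $\alpha = e^{f_\alpha}\alpha_0$ and $\phi^*\beta = e^{g_{\phi,\alpha_0}}\alpha_0$ gives $\ln h_\phi = g_{\phi,\alpha_0} - f_\alpha$. By the definition of $\preceq$, the sandwich condition $\tfrac{1}{C}\alpha \preceq \phi^*\beta \preceq C\alpha$ is equivalent to the pointwise inequality $f_\alpha - \ln C \leq g_{\phi,\alpha_0} \leq f_\alpha + \ln C$, which in turn is $\max_Y |\ln h_\phi| \leq \ln C$. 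Taking infimum first over admissible $C$ and then over $\phi$ yields
\[
\Delta_{CBM}(\alpha,\beta) \;=\; \inf_{\phi \in Cont_0(Y,\xi)} \max_Y |\ln h_\phi|, \qquad \text{where } \phi^*\beta = h_\phi \alpha.
\]

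For the second step I would rewrite $d_{CF}$ restricted to $Cont_0(Y,\xi)$ in the same form, namely $\inf_{\psi \in Cont_0(Y,\xi)} \max_Y |\ln f_\psi|$ with $\psi^*\alpha = f_\psi \beta$, and exhibit a value-preserving bijection between the two indexing sets. The involution $\phi \mapsto \phi^{-1}$ is a bijection of $Cont_0(Y,\xi)$ onto itself. If $\phi^*\beta = h_\phi \alpha$, then applying $(\phi^{-1})^*$ to both sides gives $\beta = (h_\phi \circ \phi^{-1}) \cdot (\phi^{-1})^*\alpha$, so $(\phi^{-1})^*\alpha = (h_\phi \circ \phi^{-1})^{-1} \beta$. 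Setting $\psi := \phi^{-1}$ we therefore have $\ln f_\psi = -(\ln h_\phi) \circ \phi^{-1}$, and since $\phi^{-1}$ is a diffeomorphism of $Y$ the sup-norms satisfy $\max_Y |\ln f_\psi| = \max_Y |\ln h_\phi|$. The bijection $\phi \leftrightarrow \phi^{-1}$ therefore identifies the two families term by term, and the two infima agree.

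Combining the two steps gives $\Delta_{CBM} = d_{CF}$ on $Cont_0(Y,\xi)$. No real obstacle arises: the content is a bookkeeping translation between multiplicative inequalities of contact forms and additive inequalities of their logarithmic conformal factors. The two facts that I would record explicitly, and which are the only points that genuinely require care, are that $\preceq$ is literally pointwise comparison of conformal factors (immediate from its definition in terms of the exponents $f_\alpha, f_\beta$) and that $Cont_0(Y,\xi)$ is closed under inversion, which holds since it is a subgroup of $Cont^+(Y,\xi)$.
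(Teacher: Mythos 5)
Your proof is correct and in substance the same as the paper's: both arguments reduce the sandwich condition in $\Delta_{CBM}$ to the pointwise bound $\max_Y|\ln(\text{conformal factor})|\leq \ln C$ and identify the resulting infimum over $\phi\in Cont_0(Y,\xi)$ with $d_{CF}$. The only minor difference is that the paper invokes the symmetry $\Delta_{CBM}(\alpha,\beta)=\Delta_{CBM}(\beta,\alpha)$ to match which form gets pulled back, whereas you handle that directly via the involution $\phi\mapsto\phi^{-1}$ on $Cont_0(Y,\xi)$ — an equally valid and slightly more self-contained bookkeeping step.
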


\begin{proof}
We have $$d_{CF}(\alpha,\beta)=\displaystyle\inf_{\phi \in Cont_0(Y,\xi)}\{\displaystyle\max_Y|\ln(f)|\big| \phi^*(\alpha)=f\beta\}$$
and also
$$\Delta_{CBM}(\alpha,\beta)=\inf\{\ln(C)\big|\exists \phi \in Cont_0(Y,\xi) \text{ s.t. } \frac{1}{C}\beta \preceq \phi^*(\alpha) \preceq C\beta\}$$
We rewrite the second distance in order to make it look more like the first one.

\begin{gather*}
    \Delta_{CBM}(\alpha,\beta)= \Delta_{CBM}(\beta,\alpha)=
    \displaystyle\inf_{\phi \in Cont_0(Y,\xi)}\{\ln(C)\big|\frac{1}{C}\beta \preceq \phi^*(\alpha)=f\beta \preceq C\beta\} \\
   =\displaystyle\inf_{\phi \in Cont_0(Y,\xi)}\{\ln(\displaystyle\max_Y(f))\big|\phi^*(\alpha)=f\beta\}\\
   =\displaystyle\inf_{\phi \in Cont_0(Y,\xi)}\{\displaystyle\max_Y|\ln(f)|\big| \phi^*(\alpha)=f\beta\}\\
   =d_{CF}(\alpha,\beta)
\end{gather*}

\end{proof}

The way to make geometric sense out of this definition of the distance between forms is as follows. Let $(Y,\alpha_0)$ be a closed Liouville fillable contact manifold so that there exists a domain $(W,\omega,L)$ with $\partial W=Y, (\iota_L\omega)\mid_Y=\alpha_0$ and complete flow for $t<0$. The completion of $W$ is denoted by $\widehat{W}$ and is $SY\sqcup Core(W)$ with $SY$ being the symplectization. The coordinates on $SY$ are $(u,x)$. In these coordinates, $Y=\{u=1\}$, $W=\{u\leq 1\}$, $Core(Y)=\{u=0\}$. Pick $\alpha=e^f\alpha_0 \in O_\xi(\alpha_0)$, for some $f:Y \rightarrow \mathbb{R}$. Define the corresponding Liouville domain $$W^\alpha=\{(u,x)\in \widehat{W}\mid u < e^{f(x)}\}$$

\begin{Rem}
Note that $W(\alpha)$ is not the same as $W^\alpha$ as the former refers to a subset of the symplectization and the latter to the Liouville domain bounded by the contact hypersurface equipped with the form $e^f\alpha_0$. If $\alpha_0=\alpha$, so $e^f=1$, we have that $Core(\widehat{W})\sqcup W(\alpha)=W^\alpha$. The difference becomes apparent in the absence of core.
\end{Rem}

The following was not explicitly defined in \cite{2020arXiv200105094R}, yet it was implied by their stability result, Theorem 1.14. 

\begin{Def}
 The contact Banach-Mazur distance between domains $W^{\alpha_i}$ is defined to be $\Delta_{CBM}(W^{\alpha_1},W^{\alpha_2}):=\Delta_{CBM}(\alpha_1,\alpha_2)$.
\end{Def}

Using $d_{CBM}$ as defined definition \ref{dcbmdefn}, we can also provide the following definition between such Liouville manifolds.

\begin{Def}
 The contact Banach-Mazur distance between domains $W^{\alpha_i}$ is defined to be $d_{CBM}(W^{\alpha_1},W^{\alpha_2}):=d_{CBM}(\alpha_1,\alpha_2)$.
\end{Def}

We now recall the definition of the coarse and symplectic Banach-Mazur distances.

\begin{Def}
 For two open star-shaped domains $U,V$ of a Liouville manifold $(W,\omega,L)$ we define their coarse Banach Mazur distance by 
 $$d_c(U,V):=\inf\{\ln(C)>1 | \exists (\phi,\psi) \text{ s.t. } U \xhookrightarrow{\phi}CV \text{ and } V \xhookrightarrow{\psi} CU\}$$
 where $\xhookrightarrow{\phi}$ means that between the two starshaped domains $U,V$ there exists a Hamiltonian isotopy $\{\phi_t\}_{t\in [0,1]}$ defined on $W$ such that $\phi_0=Id$ and $\phi_1(U)\subset V$.
\end{Def}

The stronger version $d_{SBM}$ of this distance is defined by additionally requiring that the composition $\psi \circ \phi$ is isotopic inside $CU$ to the identity map on $W$ through Hamiltonian isotopies. This is what is called the unknottedness condition. \par
Let $P(a,b):=B^2(a)\times B^2(b)\subset \C^2$. As far as we know, the first time a similar question was raised was in \cite{FHW}, where the authors show that if $a\leq b<c$ and $a+b>c$, then the embeddings $\phi_1,\phi_2:\rightarrow P(c,c)^\circ$ given by $\phi_1(w,z)=(w,z)$ and $\phi_2(w,z)=(z,w)$ are not isotopic through compactly supported symplectomorphisms of $P(c,c)^\circ$. Recently, a stronger notion of knottedness appeared. If $A\subset U \subset \C^n$, a symplectic embedding $\phi:A\rightarrow U$ is called knotted if there is no symplectomorphism $\psi:U\rightarrow U$ with $\psi(A)=\phi(A)$. Some examples of knotted embeddings in this stronger sense were produced by Usher and Gutt in \cite{MR3999447}, see theorem 1.10. The authors exhibit examples of toric domains in $\R^4$ which using filtered positive $S^1$-equivariant symplectic homology are shown to be knotted.

The following stability result, theorem 1.14 in \cite{2020arXiv200105094R}, holds.

\begin{Th}\label{djineq}
For any $\alpha_1,\alpha_2 \in O_\xi(\alpha_0)$, we have $$d_c(W^{\alpha_1},W^{\alpha_2})\leq d_{SBM}(W^{\alpha_1},W^{\alpha_2})\leq \Delta_{CBM}(W^{\alpha_1},W^{\alpha_2})$$
\end{Th}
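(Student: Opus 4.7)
The inequality $d_c\leq d_{SBM}$ is built into the definitions: $d_{SBM}$ is obtained from $d_c$ by imposing the additional unknottedness clause, so every $C$ admissible on the right is admissible on the left. The substance is the second inequality, which I intend to prove by producing both Hamiltonian embeddings demanded by $d_c$ out of a single Hamiltonian isotopy of the completion $\widehat{W}$, namely the lift to $\widehat{W}$ of the contact isotopy that realizes $\Delta_{CBM}$.

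Given $\epsilon>0$, I would first pick $C>1$ with $\ln C\leq \Delta_{CBM}(\alpha_1,\alpha_2)+\epsilon$ and $\phi\in Cont_0(Y,\xi)$ satisfying $\tfrac{1}{C}\alpha_1\preceq \phi^*\alpha_2\preceq C\alpha_1$, together with a contact isotopy $\{\phi_t\}_{t\in[0,1]}$ from $Id$ to $\phi$. Each $\phi_t$ admits a canonical lift to an exact symplectomorphism $\Phi_t$ of $SY=\mathbb{R}_+\times Y$: writing $\phi_t^*\alpha_0=e^{g_t}\alpha_0$, set $\Phi_t(u,x)=(ue^{-g_t(x)},\phi_t(x))$. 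A short computation yields $\Phi_t^*(u\alpha_0)=u\alpha_0$, and $\{\Phi_t\}$ is generated by the linear-in-$u$ Hamiltonian $H_t(u,x)=u\, h_t(x)$, where $h_t=\alpha_0(X_{\phi_t})$ is the contact Hamiltonian of $\phi_t$. Cutting $H_t$ off in $u$ near the core of $W$, in a region disjoint from $W^{\alpha_i}$ and $CW^{\alpha_i}$ for $i=1,2$, extends $\{\Phi_t\}$ to a compactly supported Hamiltonian isotopy of $\widehat{W}$ that still satisfies the formulas above on the relevant domains.

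The heart of the argument is the equivariance $\Phi_1(W^\beta)=W^{(\phi^{-1})^*\beta}$ for every $\beta=e^f\alpha_0\in O_\xi(\alpha_0)$, which I would verify by direct substitution into the definitions of $\Phi_1$ and $W^\beta$. Rewriting the left inequality $\tfrac{1}{C}\alpha_1\preceq \phi^*\alpha_2$ as $(\phi^{-1})^*\alpha_1\preceq C\alpha_2$ then yields $\Phi_1(W^{\alpha_1})\subseteq W^{C\alpha_2}=CW^{\alpha_2}$; the right inequality $\phi^*\alpha_2\preceq C\alpha_1$ yields $\Phi_1^{-1}(W^{\alpha_2})=W^{\phi^*\alpha_2}\subseteq W^{C\alpha_1}=CW^{\alpha_1}$. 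Thus $\Phi_1$ and $\Phi_1^{-1}$ are time-$1$ maps of Hamiltonian isotopies on $\widehat{W}$ that realize the two embeddings in the definition of $d_c$, so $d_c(W^{\alpha_1},W^{\alpha_2})\leq \ln C$.

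For the unknottedness condition needed by $d_{SBM}$, since $H_t$ is linear in $u$, conjugation by the Liouville flow $L^{\ln C}$ fixes the Hamiltonian, so $L^{\ln C}\circ \Phi_1^{-1}\circ L^{-\ln C}=\Phi_1^{-1}$ on the region where the cutoff has not yet engaged. The properly scaled composition of the two embeddings is therefore $\Phi_1^{-1}\circ \Phi_1=Id$, which is trivially isotopic to the inclusion $W^{\alpha_i}\hookrightarrow C^2 W^{\alpha_i}$ through Hamiltonian isotopies. This yields $d_{SBM}(W^{\alpha_1},W^{\alpha_2})\leq \ln C$, and sending $\epsilon\downarrow 0$ completes the proof. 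The main obstacle I foresee is not geometric but bookkeeping: verifying that the cutoff producing the global isotopy can be chosen to leave all the relevant domains and their $C$-rescalings invariant throughout the isotopy, and matching the precise sign and composition conventions of the unknottedness clause from \cite{2020arXiv200105094R}.
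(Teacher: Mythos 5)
The paper does not actually prove this theorem: it is quoted from Rosen--Zhang \cite{2020arXiv200105094R} (their Theorem 1.14), with the reader referred there for the proof, so there is no in-paper argument to compare against. Your proposal is essentially a correct reconstruction of that argument: the inequality $d_c\leq d_{SBM}$ is definitional, and for $d_{SBM}\leq\Delta_{CBM}$ you lift the contact isotopy realizing $\Delta_{CBM}$ to the homogeneous Hamiltonian isotopy $\Phi_t(u,x)=(ue^{-g_t(x)},\phi_t(x))$ of $SY$, which indeed preserves $u\alpha_0$; the equivariance $\Phi_1(W^\beta)=W^{(\phi^{-1})^*\beta}$ checks out, the two sides of $\tfrac{1}{C}\alpha_1\preceq\phi^*\alpha_2\preceq C\alpha_1$ translate exactly into $\Phi_1(W^{\alpha_1})\subseteq CW^{\alpha_2}$ and $\Phi_1^{-1}(W^{\alpha_2})\subseteq CW^{\alpha_1}$, and unknottedness comes essentially for free because the two embeddings are mutually inverse and $H_t$ is linear in $u$, hence commutes with the Liouville rescaling.

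Two points need repair before this is airtight. First, the cutoff cannot take place in a region \say{disjoint from $W^{\alpha_i}$ and $CW^{\alpha_i}$}: every $W^\beta$ is an open neighbourhood of the core, so any region near the core lies inside all of these domains. What you actually need is to cut off at a level $u\leq\epsilon$ chosen deep inside $\tfrac{1}{C}W^{\alpha_i}$ for both $i$, and then to check, using that the $u$-component of the Hamiltonian vector field of $\chi(u)\,u\,h_t(x)$ satisfies an estimate of the form $|\dot u|\leq \mathrm{const}\cdot u$, that points starting in the cutoff region stay in a slightly larger core neighbourhood still contained in all the target domains; this is exactly the bookkeeping you flagged, and it is where the actual work sits. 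Second, once the cutoff is made, the second embedding is no longer literally $\Phi_1^{-1}$, so the (rescaled) composition is the identity only away from a small core neighbourhood; the unknotting isotopy is then the compactly supported correction living in that neighbourhood, which is admissible because the neighbourhood sits inside $CU$, but this step, together with the precise Liouville-rescaled form of the unknottedness clause, should be spelled out following the conventions of \cite{2020arXiv200105094R} rather than the looser paraphrase given in this paper.
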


It is obvious from these definitions that the rescaling takes place in $SY$ using the Liouville vector field corresponding to $\alpha_0$. This is the major restriction when working with $\Delta_{CBM}$ as one has to make a choice, that is to select the reference form $\alpha_0$. This is equivalent to picking the Liouville vector field by which we rescale our contact manifolds. This appears to be very restrictive and all relevant information can be read off of the conformal factors of the forms in question, namely the function $f$ such that $\phi^*\beta=e^f\alpha_0$. By allowing cs-embeddings as we do in the definition in this article, the Liouville vector field is allowed to be modified within the compact support of the exact form $\eta$. So, we can realize that the distance defined in here is finer than the one in \cite{2020arXiv200105094R} or in other words $d_{CBM}\leq \Delta_{CBM}$. We provide a proof for this statement below.\\ \par

\begin{Th}\label{reltoJun}
Let $(Y,\xi)$ a closed contact manifold and $\alpha_1,\alpha_2$ two contact forms with $\xi=ker(\alpha_1)=ker(\alpha_2)$. Then we have $d_{CBM}(\alpha_1,\alpha_2) \leq \Delta_{CBM}(\alpha_1,\alpha_2)$.
\end{Th}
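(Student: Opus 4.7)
The plan is to show that every admissible $\ln C$ in the definition of $\Delta_{CBM}(\alpha_1,\alpha_2)$ is also admissible in the definition of $d_{CBM}(\alpha_1,\alpha_2)$, so that passing to the infimum yields the inequality. Two tools already in the paper do almost all of the work: Proposition \ref{comparisonineq}, which turns the pointwise order $\preceq$ into the cs-embedding order $\prec$ via the standard graph embedding, and the canonical cotangent lift $F_\phi(x,p)=(\phi(x),(\phi^{-1})^*p)$ associated to a diffeomorphism $\phi$ of $Y$, which is a symplectomorphism of $T^*Y$ satisfying $F_\phi^*\theta=\theta$ and intertwining $W(\phi^*\beta)$ with $W(\beta)$ (equivalently, $F_\phi(W(\phi^*\beta))=W(\beta)$, as computed in the discussion preceding Claim \ref{trans}).

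Fix $C>0$ and $\phi\in Cont_0(Y,\xi)$ with $\frac{1}{C}\alpha_1 \preceq \phi^*\alpha_2 \preceq C\alpha_1$. First I would produce a cs-embedding witnessing $\alpha_2 \prec C\alpha_1$. From the right inequality, Proposition \ref{comparisonineq} yields $\phi^*\alpha_2 \prec C\alpha_1$, realized concretely by the standard graph embedding $s_{\phi^*\alpha_2}:Y\to SY$, whose image lies in $W(C\alpha_1)$ and satisfies $s_{\phi^*\alpha_2}^*\theta=\phi^*\alpha_2$. Precomposing with $\phi^{-1}:Y\to Y$ gives $\Psi:=s_{\phi^*\alpha_2}\circ\phi^{-1}$, with $\Psi^*\theta=(\phi^{-1})^*(\phi^*\alpha_2)=\alpha_2$ and with image unchanged; this is a cs-embedding of $(Y,\alpha_2)$ (with $\eta=0$) into $W(C\alpha_1)$, so $\alpha_2 \prec C\alpha_1$. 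For the other direction, I would rescale the left inequality by $C$ to obtain $\alpha_1 \preceq \phi^*(C\alpha_2)$, so that Proposition \ref{comparisonineq} gives a standard cs-embedding $s_{\alpha_1}:(Y,\alpha_1)\to SY$ with image in $W(\phi^*(C\alpha_2))$. To convert this into the target $W(C\alpha_2)$, I would post-compose with $F_\phi$: the composite $\Phi:=F_\phi\circ s_{\alpha_1}$ satisfies $\Phi^*\theta=s_{\alpha_1}^*F_\phi^*\theta=s_{\alpha_1}^*\theta=\alpha_1$, and $F_\phi(W(\phi^*(C\alpha_2)))=W(C\alpha_2)$, so $\Phi$ is a cs-embedding of $(Y,\alpha_1)$ with image in $W(C\alpha_2)$. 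Thus $\alpha_1 \prec C\alpha_2$.

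The only point that requires genuine care, rather than bookkeeping, is the verification that the two auxiliary maps just introduced—precomposition by $\phi^{-1}$ on the source and post-composition by the cotangent lift $F_\phi$ on the target—really land us in the class of cs-embeddings with $\eta=0$, so that no perturbation one-form has to be introduced and the image inclusions in $W(C\alpha_1)$ and $W(C\alpha_2)$ are preserved. Both reduce to naturality of the tautological one-form under cotangent lifts and to the fact that $\phi$ is a diffeomorphism of $Y$, so each is a short computation. Combining the two steps shows that the same $\ln C$ is admissible for $d_{CBM}(\alpha_1,\alpha_2)$, and taking the infimum over such $C$ yields $d_{CBM}(\alpha_1,\alpha_2)\leq \Delta_{CBM}(\alpha_1,\alpha_2)$.
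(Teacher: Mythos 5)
Your proposal is correct and follows essentially the same route as the paper: the paper's proof (via the lemma preceding the theorem) uses exactly the same two witnesses, $\Psi=s_{\phi^*\alpha_2}\circ\phi^{-1}$ and $\Phi=F_\phi\circ s_{(\cdot)}$, verifies $\Psi^*\theta=\alpha_2$, $\Phi^*\theta=\alpha_1$ (up to the harmless rescaling by $C$) together with the inclusions into $W(C\alpha_1)$ and $W(C\alpha_2)$ using $F_\phi(W(\phi^*\beta))=W(\beta)$, and then passes to the infimum over admissible $C$.
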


For the proof of this theorem we will need the following lemma.

\begin{Lemma}
Let $(Y,\xi)$ a closed contact manifold, $\phi \in Cont_0(Y,\xi)$ and $\alpha_1,\alpha_2$ two contact forms supporting $\xi$. If $\frac{1}{C} \alpha_1 \leq \phi^*(\alpha_2)\leq C \alpha_1$, then $\frac{1}{C}\preceq \phi^*(\alpha_2)\preceq C \alpha_1$.
\end{Lemma}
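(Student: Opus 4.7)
My plan is to treat this lemma as an unpacking of the definition of $\preceq$ in terms of conformal factors with respect to a common reference form; the passage from the pointwise inequality of forms to the $\preceq$-order is then essentially a bookkeeping exercise.

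First, I would observe that $\phi\in Cont_0(Y,\xi)$ preserves the kernel $\xi$, so $\phi^*\alpha_2$ is still a co-orientation compatible contact form with $\ker(\phi^*\alpha_2)=\xi$ and therefore lies in the same orbit space $O_\xi(\alpha_0)$ as $\alpha_1$. Fixing $\alpha_0$ as the reference form and writing uniquely $\alpha_1 = e^{g_1}\alpha_0$ and $\phi^*\alpha_2 = e^{h}\alpha_0$, one reads off that the conformal factors (with respect to $\alpha_0$) of $\tfrac{1}{C}\alpha_1$, of $\phi^*\alpha_2$, and of $C\alpha_1$ are $g_1-\ln C$, $h$, and $g_1+\ln C$ respectively.

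Next I would rewrite the hypothesis in the same language. Because $\alpha_0$, $\alpha_1$, and $\phi^*\alpha_2$ all have kernel $\xi$ and are positive on the co-orientation, evaluating the inequality $\tfrac{1}{C}\alpha_1\leq \phi^*\alpha_2\leq C\alpha_1$ on any vector field positively transverse to $\xi$ (for instance the Reeb field of $\alpha_0$) turns it into the scalar inequality
\[
\tfrac{1}{C}e^{g_1}\leq e^{h}\leq C\,e^{g_1},
\]
equivalently $g_1-\ln C \leq h \leq g_1+\ln C$. Matching this chain of inequalities with the conformal factors listed above and invoking the definition of $\preceq$ produces $\tfrac{1}{C}\alpha_1\preceq \phi^*\alpha_2\preceq C\alpha_1$ immediately.

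I do not expect a genuine obstacle. The only point that deserves explicit justification is the passage from a pointwise inequality of $1$-forms to the pointwise inequality of their conformal factors, and this is the purely linear observation that any two co-orientation compatible contact forms on $Y$ with the same kernel differ by a nowhere-zero positive smooth function, so that inequalities of such forms against a common transverse vector coincide with inequalities of their defining scalar factors.
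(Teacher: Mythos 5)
There is a genuine gap, and it comes from how you read the conclusion. Taken literally with the Rosen--Zhang order $\preceq$ (inequality of conformal factors), the conclusion is word-for-word equivalent to the hypothesis $\frac{1}{C}\alpha_1\leq\phi^*\alpha_2\leq C\alpha_1$, so your argument is just a restatement and the lemma would carry no content. The symbol in the statement is a slip of notation: the paper's own proof, and the way the lemma is invoked immediately afterwards in Theorem \ref{reltoJun}, make clear that the intended conclusion is the cs-embedding relation $\prec$ of Definition \ref{dcbmdefn} --- one must actually produce embeddings $\Phi:(Y,\tfrac{1}{C}\alpha_1)\rightarrow SY$ and $\Psi:(Y,\alpha_2)\rightarrow SY$ with $\Phi^*\theta=\tfrac{1}{C}\alpha_1$, $\Psi^*\theta=\alpha_2$, $\Phi(Y)\subseteq W(\alpha_2)$ and $\Psi(Y)\subseteq W(C\alpha_1)$, since these are exactly the embeddings fed into the definition of $d_{CBM}$ to obtain $d_{CBM}(\alpha_1,\alpha_2)\leq\Delta_{CBM}(\alpha_1,\alpha_2)$. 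Your conformal-factor bookkeeping never produces such maps, and in particular it never deals with the contactomorphism $\phi$: the relation you end with only compares $\phi^*\alpha_2$ with $\alpha_1$, whereas $d_{CBM}(\alpha_1,\alpha_2)$ requires relations between $\alpha_1$ and $\alpha_2$ themselves.

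The missing step is precisely the construction the paper carries out: with $s_\beta:Y\rightarrow SY$, $y\mapsto(y,\beta_y)$, and $F_\phi\in Symp(SY)$ induced by $\phi$ via $F_\phi(x,p)=(\phi(x),(\phi^{-1})^*p)$, set $\Psi:=s_{\phi^*\alpha_2}\circ\phi^{-1}$ and $\Phi:=F_\phi\circ s_{\frac{1}{C}\alpha_1}$. Then $\Psi^*\theta=(\phi^{-1})^*(\phi^*\alpha_2)=\alpha_2$ and $\Phi^*\theta=s_{\frac{1}{C}\alpha_1}^*\theta=\tfrac{1}{C}\alpha_1$ (so both are cs-embeddings with $\eta=0$), and the pointwise hypothesis gives the containments $\Psi(Y)\subseteq W(C\alpha_1)$ and $\Phi(Y)\subseteq W(\alpha_2)$. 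Your observation that the form inequality is equivalent to the inequality of conformal factors is fine as far as it goes (it is essentially Proposition \ref{comparisonineq} in disguise), but without the maps $\Phi,\Psi$ built from $\phi$ the lemma cannot play its role in the comparison $d_{CBM}\leq\Delta_{CBM}$.
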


\begin{proof}
The assumption $\frac{1}{C} \alpha_1 \leq \phi^*(\alpha_2)\leq C \alpha_1$ implies that there are embeddings of $(Y,\frac{1}{C}\alpha_1),(Y,\phi^*(\alpha_2))$ and $(Y,C \alpha_1)$ in $SY$ as the graphs of $\frac{1}{C}\alpha_1, \phi^*(\alpha_2)$ and $C \alpha_1$ in $T^*Y$. Moreover, the setting $\frac{1}{C}\preceq \phi^*(\alpha_2)\preceq C \alpha_1$ requires the existence of embeddings $\Phi:(Y,\frac{1}{C}\alpha_1)\rightarrow SY$ and $\Psi:(Y,\alpha_2)\rightarrow SY$ with $\Phi^*(\theta+\eta_1)=\frac{1}{C}\alpha_1$, $\Psi^*(\theta+\eta_2)=\alpha_2$, for some compactly supported exact one forms $\eta_1,\eta_2$. For any one form $\beta$ on $Y$, we define the map $s_\beta : Y \rightarrow SY$ given by $y \mapsto (y,\beta_y)$. \par 

For $\phi \in Cont_0(Y,\xi)$ we also define $F_\phi \in Symp(SY)$ by $p \mapsto (\phi^{-1})^*p$.  We set $\Psi:=s_{\phi^*\alpha_2}\circ \phi^{-1}:(Y,\alpha_2) \rightarrow SY$ and $\Phi:=F_\phi \circ s_{\frac{1}{C}\alpha_1}: (Y,\frac{1}{C}\alpha_1) \rightarrow SY$ and we check that they have the desired properties. \par

First, $\Psi^*(\theta)=(s_{\phi^*\alpha_2}\circ (\phi^{-1}))^*\theta=(\phi^{-1})^*\circ (s_{\phi^*\alpha_2})^*\theta=(\phi^{-1})^*(\phi^*\alpha_2)=\alpha_2$ and also 
$\Phi^*(\theta)=(F_\phi \circ s_{\frac{1}{C}\alpha_1})^*=s_{\frac{1}{C}\alpha_1}^* \circ F_\phi^*\theta=s_{\frac{1}{C}\alpha_1}^*\theta=\frac{1}{C}\alpha_1$. It is also immediate that $\Psi(Y) \subseteq W(C\alpha_1)$ and $\Phi(Y) \subseteq W(\alpha_2)$. Thus, we showed $\frac{1}{C}\preceq \phi^*(\alpha_2)\preceq C \alpha_1$ as required.
\end{proof}
 
 We now prove theorem \ref{reltoJun}.
 
 \begin{proof}
 From the previous lemma we have that the setting $\frac{1}{C} \alpha_1 \leq \phi^*(\alpha_2)\leq C \alpha_1$ induces two embeddings $\Psi, \Phi$ required in the definition of $d_{CBM}$. This proves the result.
 \end{proof}

The setting we have talked about so far can be a bit more general by using $d_{CBM}/\Delta_{CBM}$ in defining a distance between hypersurfaces of restricted contact type inside Liouville manifolds $\widehat{W}$.

Pick a hypersurface of restricted contact type $Y_0$ in $(W,\theta)$ and define $\alpha_0:=\theta|_{Y_0}$. For every smooth $g:Y_0 \rightarrow \mathbb{R}$ we have a hypersurface of restricted contact type $Y_g:=\{L^{g(y)}(y) \mid y \in Y \}$, where we recall that $L^t$ denotes the Liouville flow for time $t$. We have a contactomorphism $\phi_g:Y_0 \rightarrow Y_g$ defined by the Liouville flow. More precisely, $\phi^*_g(\theta|_{Y_g})=e^g\alpha_0$. Obviously, there is a one to one correspondence

$$
\{g:Y_0 \rightarrow \mathbb{R}\} \Longleftrightarrow \Bigg\{\begin{aligned} Y_g \text{ hypersurface of restricted contact} \\ \text{ type diffeomorphic to } Y_0 \text{ under } L^t\end{aligned}\Bigg\}
$$

The definitions for the distances between hypersurfaces of such type is provided below.

\begin{Def}
 $d_{CBM}(Y_{g_1},Y_{g_2}):=d_{CBM}(e^{g_1}\alpha_0,e^{g_2}\alpha_0)$
\end{Def}

\begin{Def}
 $\Delta_{CBM}(Y_{g_1},Y_{g_2}):=\Delta_{CBM}(e^{g_1}\alpha_0,e^{g_2}\alpha_0)$
\end{Def}

Since we appeal to the definitions between contact forms, all the results we have proved so far hold also in this case. In particular, $d_{CBM}\leq \Delta_{CBM}$. The following example uses symplectic folding to create embeddings compatible with the definition of $d_{CBM}$, yet not allowed  in $\Delta_{CBM}$. It provides a smaller upper bound for $d_{CBM}$ than we have for $\Delta_{CBM}$. It is not clear though if this example shows that the inequality is strict as the best upper bound for $\Delta_{CBM}$ comes from inclusions and may not be optimal. \\ \par

We work in $\mathbb{C}^2$ with coordinates $\rho_j=\frac{1}{2}|z_j|^2$, $\theta_j \in \mathbb{Z}/2\pi \mathbb{Z}$, $j=1,2$. Let $\lambda=\rho_1d\theta_1+\rho_2d\theta_2$ be the canonical primitive for the standard symplectic structure.

\begin{Def}
 We define the ellipsoid $$E(a,b)=\Big\{(\rho_1,\theta_1,\rho_2,\theta_2)\in \C^2 \mid 2\pi\Big(\frac{\rho_1}{a}+\frac{\rho_2}{b}\Big) \leq 1, 0\leq \rho_1 \leq a, 0 \leq \rho_2 \leq b\Big\}$$
\end{Def}

\begin{Ex}\label{folding}
We normalize $S^3$ as the boundary of $E(\pi,\pi)$. Let $\alpha_0=\lambda|_{S^3=\partial E(\pi,\pi)}$. We can view the hypersurfaces of restricted contact type in $\mathbb{C}^2$, $\partial E(1,3)$ and $\partial B(\frac{1}{2})=\partial E(\frac{1}{2},\frac{1}{2})$, as copies of $S^3$ equipped with different contact forms as follows. There is a diffeomorphism $\Phi:S^3 \rightarrow \partial E(a,b)$ defined essentially by the Liouville flow. Its formula is $$\Phi(\rho_1,\theta_1,\rho_2,\theta_2)=\Big(\frac{\rho_1}{2\pi(\frac{\rho_1}{a}+\frac{\rho_2}{b})},\theta_1,\frac{\rho_2}{2\pi(\frac{\rho_1}{a}+\frac{\rho_2}{b})},\theta_2\Big)$$
Pulling back $\lambda|_{\partial E(a,b)}$ under $\Phi$ we get the form $\frac{1}{2\pi(\frac{\rho_1}{a}+\frac{\rho_2}{b})}\alpha_0$ on $S^3$. Thus, if $a=1$ and $b=3$ we get that $(S^3,\frac{1}{2\pi(\rho_1+\frac{\rho_2}{3})}\alpha_0)$ is the corresponding copy to $(\bd E(1,3),\lambda|_{\bd E(1,3)})$. Similarly, for $a=b=\frac{1}{2}$, $(S^3,\frac{1}{2\pi}\alpha_0)$ is the corresponding copy to $(\bd B(\frac{1}{2}),\lambda|_{\bd B(\frac{1}{2})})$. Now using proposition \ref{UB} we get the upper bound $\ln(6)$ for the distance between the hypersurfaces of restricted contact type $\bd E(1,3)$ and $\bd B(\frac{1}{2})$. \par
This essentially comes from the inclusions $$\frac{1}{6}\cdot B\Big(\frac{1}{2}\Big)\subseteq E(1,3) \subseteq 6\cdot B\Big(\frac{1}{2}\Big)$$

As we know, inclusion is not always optimal and the next theorem, which is a particular case of theorem 2 in \cite{Schlenk} helps us provide a smaller upper bound for $d_{CBM}$.

\begin{Th}
Assume $a_2>2a_1$. Then there exists a symplectic embedding of $E(a_1,a_2)$ into $B(a_2-\delta)$, $\forall \delta \in (0,\frac{a_2}{2}-a_1)$.
\end{Th}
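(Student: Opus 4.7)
The plan is to prove this by the symplectic folding construction whose refined form is due to Schlenk. The hypothesis $a_2>2a_1$ provides the extra $z_2$-room needed to perform a fold that flips the ``top half'' of the ellipsoid back over the ``bottom half,'' so that the resulting image has $z_2$-extent roughly $(a_2/2+a_1)/(2\pi)$ rather than $a_2/(2\pi)$, which is exactly what is needed to fit into a round ball.

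Concretely, I would first view $E(a_1,a_2)$ via its moment image: the right triangle in the $(\rho_1,\rho_2)$-plane with legs $a_1/(2\pi)$ and $a_2/(2\pi)$, fibered by Lagrangian tori. I would cut this triangle along the level $\rho_2=a_2/(4\pi)$ into a lower trapezoid $T_L$ and an upper triangle $T_U$; their preimages $E_L$ and $E_U$ partition $E(a_1,a_2)$ up to a null set, and because $a_2/2>a_1$ each of them has $\rho_1$-width at most $a_1/(2\pi)$, leaving the horizontal room required to stack one above the other after folding.

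Next I would construct the folding map in three steps via a Hamiltonian isotopy of an auxiliary open subset of $\mathbb{C}^2$: (i) \emph{Lift} $E_U$ far up along the $z_2$-axis via a Hamiltonian depending only on $\rho_2$, so that its image is disjoint from $E_L$; (ii) \emph{Turn} the lifted $E_U$ in the $(\rho_1,\rho_2)$-moment plane by $180^{\circ}$, implemented symplectically by a toric rotation of the Lagrangian torus fibers, so that what was the $\rho_2$-leg becomes the new $\rho_1$-leg and vice versa; (iii) \emph{Lower} the turned piece so that it sits directly above $E_L$ in the $z_2$-direction but with strictly smaller $|z_1|$, producing a ``zig-zag'' whose moment image is contained in a right triangle with $\rho_2$-leg $(a_2/2+a_1)/(2\pi)$ and $\rho_1$-leg $a_1/(2\pi)$.

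Finally I would compare this image with $B(a_2-\delta)$, whose moment image is the right triangle with both legs equal to $(a_2-\delta)/(2\pi)$. Containment holds precisely when $a_2/2+a_1<a_2-\delta$, i.e.\ when $\delta<a_2/2-a_1$, reproducing the stated range. The main obstacle is the \emph{turn} step: one must interpolate symplectically between the lifted $E_U$ and its rotated-and-lowered copy through Hamiltonian diffeomorphisms of controlled support, which forces one to smooth the corners of the piecewise-linear model of the fold and to keep track of the (small) area losses so that they can be absorbed by the margin $\delta>0$. This technical heart of the argument is carried out in detail in \cite{Schlenk}, and I would invoke it rather than reprove it.
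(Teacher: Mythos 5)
Your proposal is correct in essentially the same sense as the paper's own treatment: both reduce the statement to Schlenk's symplectic folding theorem (the paper routes the fold through the open trapezoid $T(a,b)$ via $E(a,b)\hookrightarrow T(a+\epsilon,b+\epsilon)$ and $T(a,a)\hookrightarrow B(a+\epsilon)$, while you sketch the fold directly on the ellipsoid), and your threshold computation $a_2/2+a_1<a_2-\delta$, i.e. $\delta<a_2/2-a_1$, matches the stated range. One caution about your heuristic steps (i)--(ii): folding is not a toric operation --- a Hamiltonian depending only on $\rho_2$ generates a rotation of the $z_2$-plane rather than a translation, and a \say{toric rotation of the Lagrangian torus fibers} fixes the moment image pointwise, so it cannot turn the upper piece in the $(\rho_1,\rho_2)$-plane (nor does any symplectomorphism realize such a turn near the axis $\rho_1=0$, where the torus fibration degenerates); the genuine mechanism folds the base disc over itself and stacks the two layers in the $z_1$-fiber direction, using that the fiber capacities of the two layers sum to $a_1$, and this is exactly the technical content of the cited result, which --- like the paper --- you ultimately invoke rather than reprove.
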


The proof amounts to the symplectic folding construction which is a composition of Hamiltonian diffeomorphisms. In order to be precise, $\forall \epsilon>0$ we have symplectic embeddings
$$E(a,b)\hookrightarrow T(a+\epsilon,b+\epsilon)\text{  and  } T(a,a)\hookrightarrow B(a+\epsilon)$$
where $T(a,b)$ is the open trapezoid and what is being folded is actually the trapezoid. We won't need to talk about the proof further in here so we avoid excess definitions. \\ \par

According to the previous theorem we have the embeddings
$$\frac{1}{6-\delta}B\Big(\frac{1}{2}\Big) \subseteq E(1,3)\xhookrightarrow{fold} (6-\delta)B\Big(\frac{1}{2}\Big), \quad \forall \delta \in \Big(0,\frac{1}{2}\Big)$$
Remark \ref{HamRem} ensures that this embedding restricted on $\bd E(1,3)$ provides a cs-embedding as defined earlier and thus compatible with the definition of $d_{CBM}$. This shows that an upper bound for $d_{CBM}$ is $\ln(6-\delta)$, with $\delta \in (0,1/2)$ clearly less than $\ln(6)$ which was obtained using proposition \ref{UB}. Moreover, the folding works in such a way that $B(\frac{1}{2})$ remains fixed under the folding embedding map. An upper bound for $\Delta_{CBM}$ is only $\ln(6)$ coming from the standard inclusions.
\end{Ex}

Theorem \ref{djineq} induces the next natural question. What is the relationship between $d_c/d_{SBM}$ and $d_{CBM}$?

\begin{Prop}\label{ineqdcdcbm}
For two starshaped domains $U,V$ inside a Liouville manifold $(\widehat{W},\omega,L)$ we have 
$d_c(U,V)\leq d_{CBM}(\bd U,\bd V)$.
\end{Prop}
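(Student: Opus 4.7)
Fix $\epsilon > 0$ and set $C := e^{d_{CBM}(\bd U,\bd V)+\epsilon}$, so that both $\alpha_U \prec C\alpha_V$ and $\alpha_V \prec C\alpha_U$, where $\alpha_U := \theta|_{\bd U}$ and $\alpha_V := \theta|_{\bd V}$. By symmetry it is enough to produce a compactly supported Hamiltonian isotopy $\Phi_t$ of $\widehat W$ with $\Phi_1(U) \subset CV$; combining it with the analogous isotopy built from $\alpha_V \prec C\alpha_U$ gives $d_c(U,V) \leq \ln C$, and letting $\epsilon \searrow 0$ yields the proposition.

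The relation $\alpha_U \prec C\alpha_V$ supplies a cs-embedding $\phi:(\bd U,\alpha_U) \hookrightarrow SY$ with $\phi(\bd U) \subset W(C\alpha_V)$ and $\phi^*(\theta+\eta) = \alpha_U$ for some exact, compactly supported $\eta = df$. Under the identification $SY \cong \widehat W \setminus Core(\widehat W)$, the region $W(C\alpha_V)$ corresponds to $CV \setminus Core(\widehat W)$. Applying the extension lemma preceding Claim \ref{trans} with $\beta = \alpha_U$ and $\psi = \phi$ yields an embedding $F:W(\alpha_U) \to SY$ that restricts to $\phi$ on the boundary, satisfies $F^*(\theta+\eta) = r\alpha_U$, and whose image is the backward $L_{\theta+\eta}$-flow of $\phi(\bd U)$, trapped inside $W(C\alpha_V)$. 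Since $\eta$ is compactly supported in $SY$, its support avoids a neighborhood of $Core(\widehat W)$, so the Liouville field of $\theta + \eta$ coincides with the standard Liouville field of $\theta$ near the core; thus $F$ extends canonically by the identity across the core to a symplectic embedding $\tilde\phi:U \to \widehat W$ with $\tilde\phi(U) \subset CV$.

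To realize $\tilde\phi$ as the time-$1$ map of a compactly supported Hamiltonian isotopy of $\widehat W$, I would interpolate the primitive: set $\theta_s := \theta + s\,df$ for $s \in [0,1]$, which equals $\theta$ outside $\mathrm{supp}\,f$. Repeating the extension construction with $\theta_s$ in place of $\theta + \eta$ produces a smooth family $\tilde\phi_s:U \to \widehat W$ of symplectic embeddings with $\tilde\phi_0 = \mathrm{id}_U$ and $\tilde\phi_1 = \tilde\phi$, and the whole deformation is supported in a compact subset of $\widehat W$. Extending the generating time-dependent vector field by zero yields a compactly supported symplectic isotopy of $\widehat W$, and the flux calculation of Remark \ref{HamRem}---any loop can be pushed outside the compact support---forces the flux to vanish, so the isotopy is Hamiltonian. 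Its time-$1$ map is the desired $\Phi_t$ with $\Phi_1(U) = \tilde\phi(U) \subset CV$.

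The principal obstacle is the third step: one must verify that the family $\{\tilde\phi_s\}$ glues into a genuinely smooth, globally compactly supported symplectic isotopy of $\widehat W$ across the core (using that $\eta$ has compact support disjoint from a neighborhood of $Core(\widehat W)$), so that the flux argument of Remark \ref{HamRem} applies and upgrades the symplectic isotopy to a Hamiltonian one.
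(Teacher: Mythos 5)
Your route is genuinely different from the paper's, and it breaks down at exactly the two places where the real difficulty sits. First, the interpolation step does not produce an isotopy starting at the identity. The extension lemma preceding Claim \ref{trans} builds $F$ from two pieces of data that must match: a boundary embedding and the primitive that it pulls back to $\alpha_U$. If you keep $\phi$ fixed and replace $\theta+\eta$ by $\theta_s=\theta+s\,df$, then for $s<1$ you no longer have $\phi^*\theta_s=\alpha_U$ (nor is $L_{\theta_s}$ known to be transverse to $\phi(\bd U)$), so the maps $\tilde\phi_s$ are not even defined; and at $s=0$ the map built from $\phi$ and the standard Liouville flow is the flow comparison between $U$ and the star-shaped domain bounded by $\phi(\bd U)$, which is \emph{not} $\mathrm{id}_U$ unless $\phi$ is already the standard embedding $s_{\alpha_U}$. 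Hence $\tilde\phi_0\neq\mathrm{id}_U$ and the flux argument of Remark \ref{HamRem} has no isotopy starting at the identity to act on; moreover, "extending the generating vector field by zero" is unjustified, since that field is only defined along the images of $U$ and nothing makes it vanish near the edge of its domain. What is missing is precisely the hypothesis the paper states before the proof and uses in its first line: that $\phi_1,\psi_1$ are isotopic \emph{through cs-embeddings} to the standard embeddings $s_\alpha,s_\beta$. That family, with its family $\eta_{1,s}$ of compactly supported exact forms, gives a homotopy of Liouville structures $(\widehat W,d(\theta+\eta_{1,s}))$; the paper then applies Proposition \ref{diffeotopy} to obtain a \emph{global} diffeotopy $h_s$ of $\widehat W$ with $h_s^*(\theta+\eta_{1,s})-\theta=df_s$ compactly supported, upgrades it to a Hamiltonian isotopy by Lemma \ref{McS}, and reads off $h_1(W^\alpha)\subset W^{C\beta}$, so no extension across the core and no ad hoc cutoff of a vector field is ever needed.

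Second, the passage across the core in your step two is not correct as stated. Even though $L_{\theta+\eta}=L_\theta$ near the core, $F$ is not the identity there: near the core it is the holonomy along $L_\theta$-trajectories comparing $U$ with the domain bounded by $\phi(\bd U)$, so "extending by the identity" is not even continuous, and the continuous extension (core to core) is in general not smooth --- already in $(\R^{2n},\tfrac12\sum(x\,dy-y\,dx))$ the extension is positively homogeneous of degree one and is differentiable at the origin only when it is linear. So the symplectic embedding $\tilde\phi:U\to\widehat W$ that your argument rests on is not established. This is exactly why the paper works with a diffeotopy of the completed manifold produced by the Liouville-homotopy machinery rather than with a fiberwise Liouville-flow extension of a single cs-embedding of the boundary.
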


\begin{Rem}\label{isot1}
If the isotopies produced in the proof satisfy the unknottedness condition, then we can also show that $d_{SBM}(U,V) \leq d_{CBM}(\bd U, \bd V)$. This condition will be explained after the proof of proposition \ref{ineqdcdcbm}, in remark \ref{isot1pf}.
\end{Rem}

The extra difficulty in proving this theorem arises due to the extra freedom we have when embedding $\bd U$ and $\bd V$ in the symplectization in order to calculate their distance. As explained before, the embeddings are only required to be transverse to some locally modified Liouville vector field associated to the form $\theta+\eta$. Thus, the proof amounts to be able to extend the embedding of the boundary $\bd U$ (or $\bd V$) to an embedding of the whole domain $U$ (or $V$). The first strategy one might think is to follow the flowlines of the Liouville vector field $L_{\theta+\eta}$, yet the vector field can now vanish disallowing the definition of a full symplectomorphism from $U$ (or $V$) to the domain bounded by $\phi(U)$ (or $\psi(V)$). The way to define such a symplectomorphism is using Liouville homotopies and a helpful proposition, proposition 11.8 from \cite{Cie-Eli}, which we now state.

\begin{Prop}\label{diffeotopy}
Let $(\widehat{W},\omega_s,L_s), s\in [0,1]$, be a homotopy of Liouville manifolds with Liouville forms $\lambda_s$. Then there exists a diffeotopy $h_s:\widehat{W} \rightarrow \widehat{W}$ with $h_0=Id$ such that $h^*_s\lambda_s-\lambda_0=df_s$ for all $s\in [0,1]$. If moreover $\overline{\bigcup_{s \in [0,1]}Core(\widehat{W},\omega_s,L_s)}$ is compact (e.g. for the completion of a homotopy of Liouville domains), then we can achieve $h^*_s\lambda_s-\lambda_0=0$ outside of a compact set.
\end{Prop}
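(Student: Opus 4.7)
The plan is a Moser-type trick adapted to the Liouville setting. I seek $h_s$ as the time-$s$ flow of a smooth time-dependent vector field $X_s$ on $\widehat{W}$ with $h_0 = \mathrm{Id}$. Differentiating the target identity $h_s^*\lambda_s = \lambda_0 + df_s$ in $s$ and applying Cartan's formula yields
$$\frac{d}{ds}(h_s^*\lambda_s) = h_s^*\bigl(\dot\lambda_s + i_{X_s}\omega_s + d(\lambda_s(X_s))\bigr).$$
To force the right-hand side to be exact for every $s$, it suffices to arrange $\dot\lambda_s + i_{X_s}\omega_s = 0$; since each $\omega_s$ is nondegenerate, this has a unique pointwise solution $X_s$. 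With this choice the expression reduces to $d\bigl(h_s^*(\lambda_s(X_s))\bigr)$, and integrating in $s$ yields the required primitive $f_s := \int_0^s h_\sigma^*\bigl(\lambda_\sigma(X_\sigma)\bigr)\,d\sigma$ together with $h_s^*\lambda_s - \lambda_0 = df_s$.

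To make the above rigorous one must verify that the ODE $\frac{d}{ds}h_s = X_s \circ h_s$ admits a global solution for $s \in [0,1]$. Outside a sufficiently large compact set the Liouville flow identifies the end of $\widehat{W}$ with a symplectization $(R,\infty)\times Y$, on which $\lambda_s = e^r \alpha_s$, $\omega_s = e^r(dr\wedge \alpha_s + d\alpha_s)$, and $\dot\lambda_s = e^r\dot\alpha_s$; solving $i_{X_s}\omega_s = -\dot\lambda_s$ then shows that the $e^r$ factors cancel and that $X_s$ has coefficients independent of the radial variable, so is bounded on the end. An exhaustion argument produces a global flow and proves the first assertion.

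For the compactly supported refinement, assume $C := \overline{\bigcup_{s \in [0,1]}\operatorname{Core}(\widehat{W},\omega_s,L_s)}$ is compact. I would proceed in two stages. First, I would construct a preliminary diffeotopy $k_s$ with $k_0 = \mathrm{Id}$ such that $k_s^*\lambda_s = \lambda_0$ holds outside a compact neighborhood of $C$: outside $C$ the $L_s$-flows all trivialize the end as a symplectization, and a family of contact isotopies provided by Gray stability applied to the induced contact structures on a large cross-section, combined with a rescaling along $L_s$ that uses the conformal identity $\mathcal{L}_{L_s}\lambda_s = \lambda_s$, lets one align every $\lambda_s$ with $\lambda_0$ on the end. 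Second, I would apply the Moser trick from the first paragraph to the new family $\mu_s := k_s^*\lambda_s$: since $\mu_s$ is $s$-independent on the end, $\dot\mu_s$ is compactly supported and therefore so is the corresponding Moser field, so the generated diffeotopy $\tilde h_s$ equals the identity on the end. The composite $h_s := k_s \circ \tilde h_s$ then satisfies the global identity $h_s^*\lambda_s = \lambda_0 + df_s$ and agrees with $k_s$ outside a compact set, yielding the exact equality $h_s^*\lambda_s = \lambda_0$ there. The main obstacle will be the construction of $k_s$ in the first stage, since one must upgrade the Gray-stability contactomorphisms on a single cross-section to a diffeotopy of the entire end that matches the full $1$-form $\lambda_s$ rather than only its restriction; this is essentially a parametric version of the uniqueness-of-completion theorem for Liouville domains, and it is where the hypothesis on the cores is used.
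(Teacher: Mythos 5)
First, note that the paper itself does not prove this proposition: it is quoted verbatim as Proposition 11.8 of \cite{Cie-Eli}, so your attempt can only be measured against the argument given there. Your Moser computation is correct as far as it goes, but the entire content of the first assertion is the global integrability of the Moser field $X_s$ defined by $i_{X_s}\omega_s=-\dot\lambda_s$, and your argument for that rests on an unjustified claim: that there is a \emph{single} identification of the end of $\widehat{W}$ with $(R,\infty)\times Y$ in which $\lambda_s=e^r\alpha_s$ with $\alpha_s$ independent of $r$ for \emph{every} $s$. For each fixed $s$ the flow of $L_s$ produces such coordinates on the complement of $\mathrm{Core}(\widehat{W},\omega_s,L_s)$, but the identification depends on $s$; in a fixed chart at infinity $\lambda_s$ is just some $1$-form, and in general no compact set contains all the cores as $s$ varies. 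Indeed, if your uniform cylindrical model existed, every core would lie in $\{r\le R\}$, so the hypothesis of the second sentence of the proposition would hold automatically --- the very distinction the statement is careful to draw shows that this cannot be assumed. Consequently the boundedness of $X_s$ on the end, hence the existence of the flow up to $s=1$, is not established in the general case. This is precisely the difficulty the Cieliebak--Eliashberg proof is designed to overcome: one only needs $\dot\lambda_s+i_{X_s}\omega_s$ to be \emph{exact}, and that freedom (replacing $-\dot\lambda_s$ by $-\dot\lambda_s+dg_s$, working along an exhaustion by Liouville domains transverse to the Liouville fields, and correcting in the Liouville direction) is what yields an integrable vector field; your proof never uses this freedom, so the gap is structural rather than cosmetic.

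For the compactly supported refinement, your two-stage plan (first a diffeotopy $k_s$ making $\lambda_s$ agree with $\lambda_0$ near infinity, using a hypersurface transverse to all $L_s$, Gray stability and a radial shift along the Liouville flow; then a Moser isotopy generated by the now compactly supported $\dot\mu_s$) is the right shape, and it correctly identifies where the precompactness of $\bigcup_s\mathrm{Core}(\widehat{W},\omega_s,L_s)$ enters. But, as you acknowledge yourself, the key step --- constructing $k_s$, i.e.\ upgrading the cross-section contactomorphisms to a smooth family of diffeomorphisms of all of $\widehat{W}$ that pull back the full Liouville forms near infinity --- is exactly the parametric uniqueness-of-completion statement and is not carried out. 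So the second part remains a plausible sketch rather than a proof. (It is worth remarking that in the only place this paper applies the proposition, the homotopy is $\lambda_s=\theta+\eta_{1,s}$ with $\eta_{1,s}$ compactly supported, where $\dot\lambda_s$ is compactly supported and your naive Moser argument does work; but the proposition as stated is the general one, and for it your proof has the gaps above.)
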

We will be interested in the case of Liouville domains and the completion of a homotopy between them.~\\

The following lemma is essentially exercise 10.2.6 in \cite{McDuff-Salamon} which is a generalization of their proposition 9.3.1 in the non-compact manifolds case. It is used to detect when a compactly supported symplectomorphism is a Hamiltonian one.

\begin{Lemma}\label{McS}
Let $(M,\omega=-d\lambda)$ be a non-compact symplectic manifold. $\phi \in Symp_c(M,\omega)$ belongs to $Ham_c(M,\omega) \iff \phi \in Symp_{c,0}$ and there exists a compactly supported smooth function $F:M \rightarrow \R$ such that $\phi^*\lambda-\lambda=dF$
\end{Lemma}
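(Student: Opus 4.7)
The plan is to prove the two implications separately, treating $\phi^*\lambda-\lambda$ as a closed, compactly supported $1$-form on $M$ and relating its class in $H^1_c(M;\R)$ to the compactly supported flux of any symplectic isotopy from $Id$ to $\phi$.

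For the forward direction, assume $\phi=\phi_1$ for a compactly supported Hamiltonian isotopy $\{\phi_t\}$ generated by $X_{H_t}$ with $\iota_{X_{H_t}}\omega=dH_t$ and $H_t$ of compact support; in particular $\phi\in Symp_{c,0}$. Combining Cartan's magic formula with $\omega=-d\lambda$ gives $\mathcal{L}_{X_{H_t}}\lambda = d\bigl(\iota_{X_{H_t}}\lambda - H_t\bigr)$, so
\[\phi^*\lambda-\lambda \;=\; \int_0^1 \phi_t^*\mathcal{L}_{X_{H_t}}\lambda\,dt \;=\; d\!\int_0^1 \phi_t^*\bigl(\iota_{X_{H_t}}\lambda - H_t\bigr)dt.\]
The primitive is compactly supported because $X_{H_t}$ and $H_t$ are supported in a fixed compact set and $\phi_t$ is the identity outside that set.

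For the reverse direction, assume $\phi\in Symp_{c,0}$ and $\phi^*\lambda-\lambda=dF$ for a compactly supported $F$, and pick any compactly supported symplectic isotopy $\{\phi_t\}$ from $Id$ to $\phi$, generated by $X_t$. Each $\iota_{X_t}\omega$ is closed and compactly supported, so the compactly supported flux
\[\mathrm{Flux}_c(\{\phi_t\}) \;:=\; \int_0^1 [\iota_{X_t}\omega]\,dt \;\in\; H^1_c(M;\R)\]
is well defined. Using $\mathcal{L}_{X_t}\lambda = d(\iota_{X_t}\lambda) - \iota_{X_t}\omega$ and integrating, one obtains $\phi^*\lambda-\lambda = d\!\int_0^1\phi_t^*\iota_{X_t}\lambda\,dt - \int_0^1\phi_t^*\iota_{X_t}\omega\,dt$. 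Applying the identity $\phi_t^*\beta_t-\beta_t=d\!\int_0^t\phi_s^*\iota_{X_s}\beta_t\,ds$ (valid for any closed family $\beta_t$) with $\beta_t=\iota_{X_t}\omega$, and integrating in $t$, shows that $\int_0^1 \phi_t^*\iota_{X_t}\omega\,dt$ and $\int_0^1\iota_{X_t}\omega\,dt$ differ by an exact compactly supported $1$-form. Hence $[\phi^*\lambda-\lambda] = -\mathrm{Flux}_c(\{\phi_t\})$ in $H^1_c(M;\R)$, and the hypothesis on $F$ forces $\mathrm{Flux}_c(\{\phi_t\})=0$.

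It remains to deduce $\phi\in Ham_c$ from vanishing compactly supported flux, which is the compactly supported analogue of Banyaga's theorem: a compactly supported symplectic isotopy whose total flux vanishes in $H^1_c(M;\R)$ can be deformed, with endpoints fixed, to a compactly supported Hamiltonian isotopy. Concretely, writing $\iota_{X_t}\omega=dK_t+\mu_t$ with $\int_0^1\mu_t\,dt$ genuinely exact and compactly supported allows one to absorb $\mu_t$ by a time reparametrization that preserves $\phi_1=\phi$ while upgrading the generators to compactly supported Hamiltonians. I expect this final step to be the main technical obstacle, since one must propagate compact support of the primitives through the reparametrization; this is exactly the point at which the hypothesis that $F$ be compactly supported, rather than merely smooth, becomes essential.
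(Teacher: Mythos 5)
The paper does not actually prove this lemma: it is quoted as ``essentially Exercise 10.2.6 in \cite{McDuff-Salamon}'' (the compactly supported, non-compact analogue of their Proposition 9.3.1), so there is no in-text argument to compare against. Your proposal is, in outline, the intended solution of that exercise. The forward direction is complete and correct: with $\omega=-d\lambda$ and $\iota_{X_{H_t}}\omega=dH_t$ one gets $\mathcal{L}_{X_{H_t}}\lambda=d(\iota_{X_{H_t}}\lambda-H_t)$, and the integrated primitive is compactly supported because everything lives in a fixed compact set. Your cohomological identity $[\phi^*\lambda-\lambda]=-\mathrm{Flux}_c(\{\phi_t\})$ in $H^1_c(M;\R)$ is also correct (note it is exactly the content of the paper's Lemma \ref{McS1}, up to sign conventions), and it has the pleasant by-product that on an exact symplectic manifold the compactly supported flux of an isotopy from $Id$ to $\phi$ depends only on $\phi$, so no flux-group ambiguity arises. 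Hence the hypothesis $\phi^*\lambda-\lambda=dF$ with $F$ compactly supported does force $\mathrm{Flux}_c(\{\phi_t\})=0$.

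The one soft spot is your final step. Deducing $\phi\in Ham_c$ from vanishing compactly supported flux is a genuine theorem (the compactly supported version of McDuff--Salamon Theorem 10.2.5, going back to Banyaga), and citing it is perfectly legitimate in this context --- the paper itself defers the whole lemma to a citation. But the mechanism you sketch for proving it, absorbing the non-exact part $\mu_t$ of $\iota_{X_t}\omega$ ``by a time reparametrization,'' would not work: reparametrizing in $t$ only rescales the generating vector fields by positive functions of time and cannot convert a non-exact closed form $\iota_{X_t}\omega$ into an exact one, nor does it change the partial fluxes. The standard proof instead deforms the isotopy rel endpoints by composing with flows of symplectic vector fields dual to closed, compactly supported $1$-forms, chosen so that the partial flux $\mathrm{Flux}_c(\{\phi_s\}_{s\le t})$ vanishes for every $t$; an isotopy with identically vanishing partial flux is then generated by exact, compactly supported forms, i.e.\ is Hamiltonian. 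If you either invoke that theorem as a black box or reproduce its proof in the compactly supported setting, your argument is complete; as written, the last paragraph should not be presented as a proof sketch.
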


If one prefers to use the flux homomorphism, the following holds.

\begin{Lemma}\label{McS1}
If $\omega=-d\lambda$ and $\psi_t:M\rightarrow M$ is a compactly supported symplectic isotopy, then $Flux(\{\psi_t\})=[\lambda-\psi_1^*\lambda]$.
\end{Lemma}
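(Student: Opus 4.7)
The plan is to differentiate $\psi_t^*\lambda$ along the isotopy using Cartan's formula, integrate in $t$, and match the resulting cohomology class to the integral formula for the flux given in Remark \ref{HamRem}. Letting $X_t$ denote the time-dependent vector field generating $\{\psi_t\}$ (so $\tfrac{d}{dt}\psi_t=X_t\circ\psi_t$), and using $d\lambda=-\omega$, Cartan's formula yields
\[
\frac{d}{dt}\psi_t^*\lambda \;=\; \psi_t^*\mathcal{L}_{X_t}\lambda \;=\; d\bigl(\psi_t^*\iota_{X_t}\lambda\bigr)\;-\;\psi_t^*\iota_{X_t}\omega.
\]
Integrating from $0$ to $1$ and rearranging produces
\[
\lambda-\psi_1^*\lambda \;=\; \int_0^1 \psi_t^*\iota_{X_t}\omega\,dt \;-\; d\!\int_0^1 \psi_t^*\iota_{X_t}\lambda\,dt .
\]
Since the isotopy is compactly supported, $X_t$ vanishes outside a fixed compact set, both integrands are supported there for all $t$, and $\int_0^1 \psi_t^*\iota_{X_t}\lambda\,dt$ is a compactly supported smooth function on $M$. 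Hence in $H^1(M;\R)$ one gets $[\lambda-\psi_1^*\lambda]=\bigl[\int_0^1\psi_t^*\iota_{X_t}\omega\,dt\bigr]$.

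The remaining step is to identify this class with $\mathrm{Flux}(\{\psi_t\})$. For each $t$ the one-form $\iota_{X_t}\omega$ is closed, because $X_t$ is symplectic and therefore $\mathcal{L}_{X_t}\omega=0$ forces $d\iota_{X_t}\omega=0$. Moreover $\psi_t$ is smoothly isotopic to the identity through the isotopy itself, so the induced map $\psi_t^*$ is the identity on $H^1(M;\R)$ and $[\psi_t^*\iota_{X_t}\omega]=[\iota_{X_t}\omega]$ for every $t$. Pushing the bracket through the integral therefore gives $\int_0^1[\iota_{X_t}\omega]\,dt$, and by pairing against loops one recovers exactly the double integral that appears in Remark \ref{HamRem}, so this cohomology class is $\mathrm{Flux}(\{\psi_t\})$.

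The point that I expect to require most care is the bookkeeping around compact supports: one must verify that every intermediate form is uniformly compactly supported, so that the equality of cohomology classes is genuinely taking place in the compactly supported de Rham cohomology group that is relevant to the non-compact setting of Lemma \ref{McS}. A secondary but routine step is the use of homotopy invariance of de Rham cohomology to commute $\psi_t^*$ past the class bracket, which I would cite directly rather than reprove.
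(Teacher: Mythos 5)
Your argument is correct and is the standard one: the paper states Lemma \ref{McS1} without proof, deferring to the flux discussion in \cite{McDuff-Salamon}, and your Cartan-formula computation $\frac{d}{dt}\psi_t^*\lambda=d(\psi_t^*\iota_{X_t}\lambda)-\psi_t^*\iota_{X_t}\omega$ followed by the identification $[\psi_t^*\iota_{X_t}\omega]=[\iota_{X_t}\omega]$ is exactly that argument. The one point you flag as needing a citation, homotopy invariance in the compactly supported setting, in fact follows from the same homotopy formula you already wrote: for each fixed $t$ the difference $\psi_t^*\iota_{X_t}\omega-\iota_{X_t}\omega$ equals $d$ of the compactly supported function $\int_0^t\psi_s^*\iota_{X_s}\iota_{X_t}\omega\,ds$, so every intermediate class indeed lives in compactly supported cohomology and no external reference is required.
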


The following lemma, lemma 2.10 from \cite{2018arXiv181100734U}, will be useful in order to control the support of Hamiltonian diffeomorphisms.

\begin{Lemma}\label{usher}
Let X be a manifold without boundary equipped with a smooth family of 1-forms $\lambda_t, 0\leq t \leq 1$ such that $d\lambda_t$ is symplectic and independent of $t$.  Assume furthermore that the Liouville vector fields $L_{\lambda_t}$ of $\lambda_t$ are each complete. Let W be a compact codimension-zero submanifold of X with boundary $\bd W$, having the properties that each $L_{\lambda_t}$ is positively transverse to $\bd W$ and that every point of $X$ lies on a flowline of $L_{\lambda_t}$ that intersect W. Then, there exists a smooth family of symplectomorphisms $F_t:X\rightarrow X$ such that $F_0=Id$ and the support of $F_t$ is contained in the interior of $W$ for all $t$.
\end{Lemma}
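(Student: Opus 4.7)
The plan is to prove Lemma \ref{usher} by a Moser-type deformation argument, with the twist that the generating time-dependent vector field must be supported in $\interior{W}$. Since $d\lambda_t = \omega$ is independent of $t$, the derivative $\dot\lambda_t := \partial_t \lambda_t$ is automatically closed; indeed, differentiating the defining identity $i_{L_{\lambda_t}}\omega = \lambda_t$ in $t$ gives $\dot\lambda_t = i_{Z_t}\omega$ with $Z_t := \partial_t L_{\lambda_t}$. The naive Moser vector field $X_t = -Z_t$ integrates to an isotopy $F_t$ with $F_t^*\lambda_t - \lambda_0$ exact, but $Z_t$ is in general not supported in $W$. The task is therefore to modify $-Z_t$ by an exact form (which preserves the Moser property up to a Hamiltonian correction) so that what remains is supported in $\interior{W}$.

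The crucial geometric input is that the hypotheses give an explicit model for $\lambda_t$ outside $W$. Using positive transversality of each $L_{\lambda_t}$ to $\bd W$, completeness of the flows, and the assumption that every $L_{\lambda_t}$-orbit meets $W$, the map $(s,y) \mapsto L_{\lambda_t}^s(y)$ identifies $X \setminus W$ with $(0,\infty) \times \bd W$, and under this identification one has $\lambda_t = e^s \alpha_t$ with $\alpha_t := \lambda_t|_{\bd W}$, by the conformal scaling $(L_{\lambda_t}^s)^*\lambda_t = e^s\lambda_t$. Consequently, on the exterior cone, $\dot\lambda_t = e^s \dot\alpha_t + (\cdots)\, ds$ has an explicit primitive in the $s$-direction, and one can produce a smooth function $H_t$ on $X$ such that $\dot\lambda_t - dH_t$ is supported in $\interior{W}$. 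Taking $X_t$ to be the $\omega$-dual of $-(\dot\lambda_t - dH_t)$, the vector field $X_t$ is supported in $\interior{W}$ and still satisfies the Moser equation $i_{X_t}\omega + \dot\lambda_t = dH_t$, so by Cartan's formula $\tfrac{d}{dt}(F_t^*\lambda_t) = F_t^*(d(H_t - i_{X_t}\lambda_t))$ is exact.

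Integrating $X_t$ over $t \in [0,1]$ produces the family $F_t$ of symplectomorphisms with $F_0 = \mathrm{Id}$ and support in $\interior{W}$; compactness of $W$ and completeness of the $L_{\lambda_t}$ ensure that $X_t$ is complete and the flow is defined for all $t \in [0,1]$. The main obstacle will be the simultaneous demand that $X_t$ solve the Moser equation and remain compactly supported in $\interior{W}$: the naive Moser solution lives on all of $X$, and it is precisely the flowline hypothesis together with the explicit model $\lambda_t = e^s\alpha_t$ on the Liouville cone over $\bd W$ that allows one to cancel the exterior portion of $\dot\lambda_t$ by an exact differential $dH_t$, leaving an interior obstruction handled by a vector field supported in $\interior{W}$. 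Making this cancellation smooth across $\bd W$, via a cutoff along Liouville orbits matched to the exterior primitive, is the technical heart of the argument.
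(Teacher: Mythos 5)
The paper itself does not prove this lemma -- it is quoted (as Lemma 2.10) from \cite{2018arXiv181100734U} -- so there is no internal argument to compare yours against, and I can only assess your proposal on its own terms. Your reading of how the hypotheses should be used is the natural one: transversality, completeness and the flowline condition do identify $X\setminus \mathrm{int}\,W$ with the half-cone $[0,\infty)\times\partial W$ via the Liouville flow, with $\lambda_t$ pulled back to $e^s(\lambda_t|_{T\partial W})$, and a Moser scheme whose vector field is the $\omega$-dual of $dH_t-\dot\lambda_t$ is the right way to force support in the interior of $W$. (You also sensibly supply the condition that the quoted statement omits -- as literally written, $F_t\equiv\mathrm{Id}$ satisfies it -- namely that $F_t^*\lambda_t-\lambda_0$ be exact.)

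The genuine gap is the step ``one can produce a smooth function $H_t$ on $X$ such that $\dot\lambda_t-dH_t$ is supported in $\mathrm{int}\,W$.'' This requires $\dot\lambda_t$ to be exact on the exterior region, i.e.\ that its periods over loops in $\partial W$ vanish, and neither the cone model $\lambda_t=e^s\alpha_t$ nor the stated hypotheses give this, since $\alpha_t$ moves with $t$ and $\dot\alpha_t$ can have nonzero periods. Concretely, take $X=T^*S^1$ with coordinates $(q,p)$, $\omega=dp\wedge dq$, $\lambda_t=\bigl(p-\tfrac{t}{2}\bigr)dq$ and $W=S^1\times[-1,1]$: the Liouville fields $\bigl(p-\tfrac{t}{2}\bigr)\partial_p$ are complete, positively transverse to $\partial W$, and every flowline meets $W$, yet $\dot\lambda_t=-\tfrac{1}{2}dq$ has a nonzero period, so no such $H_t$ exists; worse, no symplectomorphism supported in a compact set can make $F_t^*\lambda_t-\lambda_0$ exact, because $[F_t^*\lambda_t-\lambda_t]=0$ for compactly supported $F_t$ while $[\lambda_t-\lambda_0]=-\tfrac{t}{2}[dq]\neq 0$. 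So the Moser-type conclusion you are aiming at is actually false under the hypotheses as quoted; one needs in addition that $\lambda_t-\lambda_0$ is exact for each $t$ (equivalently that $\dot\lambda_t$ is exact), which does hold in the application in this paper, where $\lambda_t=(\Psi_t\circ\Phi_t)^*\theta$ for Hamiltonian isotopies, and is presumably present in the precise statement of the source lemma. With that hypothesis added, your outline goes through: choose primitives of $\dot\lambda_t$ smoothly in $t$, cut them off along the Liouville collar so that $dH_t=\dot\lambda_t$ on a neighborhood of $X\setminus\mathrm{int}\,W$, and integrate the resulting compactly supported Moser field. A smaller point: the cone coordinates depend on $t$ (they use the flow of $L_{\lambda_t}$), so your formula for $\dot\lambda_t$ on the cone needs the extra terms coming from differentiating the identification, though that is repairable.
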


It is worth noting that the definition of $d_c/d_{SBM}$ dictates that if we want to make any meaningful comparison between these distances and $d_{CBM}$, we have better to restrict to the case where the cs-embeddings of the contact manifolds $\bd U$ and $\bd V$ (equipped with the contact form induced  by the transverse Liouville vector field) in the symplectization are isotopic through contact hypersurfaces to the canonical embeddings as the boundaries of $U$ and $V$ via the restriction of a Hamiltonian isotopy. This is because the maps  $\phi$ and $\psi$ interleaving $U$ and $V$ in the definition of $d_c$ are Hamiltonian isotopic to the identity and the goal is the cs-embeddings to be the restrictions of these Hamiltonian isotopies to the respective boundaries. With this in mind we now prove proposition \ref{ineqdcdcbm}.

\begin{proof}
We denote by $(Y,\alpha)$ and by $(Y,\beta)$ the hypersurfaces of restricted contact type $(\bd U,\theta|_{\bd U})$ and $(\bd V,\theta|_{\bd V})$. Also, let $U=W^\alpha$ and $V=W^\beta$ their fillings.
The goal is to show that the cs-embeddings required to define $d_{CBM}(\alpha,\beta)$ induce a Hamiltonian isotopy defined on $\widehat{W}$ which is used to calculate $d_c(W^\alpha,W^\beta)$. This will yield the desired inequality $d_c(U,V)=d_c(W^\alpha,W^\beta)\leq d_{CBM}(\alpha,\beta)=d_{CBM}(\bd U,\bd V)$ since when calculating $d_c$, we calculate the infimum over a possibly larger set of functions than $d_{CBM}$.\par

Let $C\geq 1$. Let $\phi_1:(Y,\alpha)\rightarrow(SY,d\theta)$ and $\psi_1:(Y,\beta)\rightarrow (SY,d\theta)$ be cs-embeddings, i.e. $\phi_1^*(\theta+\eta_1)=\alpha$ and $\psi_1^*(\theta+\eta_2)=\beta$, for two compactly supported exact one forms $\eta_1,\eta_2$, with $\phi_1(Y)\subset W^{C\beta}$ and $\psi_1(Y)\subset W^{C\alpha}$.\par

Since we make the assumption that $\phi_1$ and $\psi_1$ are isotopic to the standard embeddings $s_\alpha$ and $s_\beta$ through cs-embeddings, there are families $\phi_s:(Y,\alpha)\rightarrow (SY,d\theta)$ and $\phi_s:(Y,\alpha)\rightarrow (SY,d\theta)$, $s\in[0,1]$, of cs-embeddings with $\phi_0=s_\alpha$ and $\psi_0=s_\beta$ respectively. This means that $\phi_s^*(\theta+\eta_{1,s})=\alpha$ and $\psi_s^*(\theta+\eta_{2,s})=\beta$ for two families of compactly supported exact 1-forms $\eta_{1,s}$ and $\eta_{2,s}$ with $\eta_{1,0}=\eta_{2,0}=0$ and $\eta_{1,1}=\eta_1,\eta_{2,1}=\eta_2$. It is enough to extend these families to families of Hamiltonian isotopies of $\widehat{W}$ denoted by $\Phi_s$ and $\Psi_s$ that are compactly supported in neighbourhoods of $s_\alpha(Y)$ and $s_\beta(Y)$. The idea is that we want $\Phi_s|_{s_\alpha(Y)}=\phi_s$ and $\Psi_s|_{s_\beta(Y)}=\psi_s$ This is where we we need to use proposition \ref{diffeotopy}.\\ \par

For $s\in[0,1]$, we have a homotopy of Liouville manifolds $(\widehat{W},d(\theta+\eta_{1,s}),L_s)$ with Liouville forms $\theta+\eta_{1,s}$. Then according to proposition \ref{diffeotopy} there is a diffeotopy $h_s:\widehat{W} \rightarrow\widehat{W}$, $h_0=Id$, $h_s^*(\theta+\eta_{1,s})-\theta=df_s$ where $f_s$ compactly supported in a neighbourhood of $s_\alpha$. Proposition \ref{McS} yields that $h_s$ is indeed a Hamiltonian isotopy. In this case, $\lambda=-\theta$, $\phi=h_1$ and $F=-f_1+g_1$, where $dg_1=\eta_1$. Alternatively, one can use lemma \ref{McS1} and observe that $\theta-h_1^*\theta$ is exact. Moreover, we have $h_1(W^\alpha)\subset W^{C\beta}$. So, this yields a map $U\xhookrightarrow{\Phi_1}CV$ as required in the definition of $d_c$.
Running the same argument for the homotopy of Liouville manifolds $(\widehat{W},d(\theta+\eta_{2,s}),L'_s)$ gives the second map $V\xhookrightarrow{\Psi_1}CU$ required in the definition of $d_c$.\par

Hence, what we achieved is that given any two cs-embeddings $\phi_1,\psi_1$ used to calculate $d_{CBM}$, we get maps $\Phi_1,\Psi_1$ used to calculate $d_c$. This yields the desired inequality $d_c(U,V)\leq d_{CBM}(\bd U,\bd V)$.
\end{proof}

We now explain the condition under which we can show the inequality $d_{SBM}(U,V)\leq d_{CBM}(\bd U,\bd V)$ mentioned in remark \ref{isot1}.

\begin{Rem}\label{isot1pf}
If $\Psi_1 \circ \Phi_1:\frac{1}{C}U\rightarrow CU$ is Hamiltonian isotopic to the identity map of $\widehat{W}$ within $CU$, then we say that the isotopies satisfy the unknottedness condition. In such favorable case, the proof can be extended to show that also $d_{SBM}(U,V)\leq d_{CBM}(\bd U,\bd V)$. Lemma \ref{usher} helps with controlling the support of the relevant Hamiltonian diffeomorphisms. In particular, if $X=\widehat{CU}$ and $\lambda_t$ is the pullback of $\theta$ under $\Psi_t \circ \Phi_t$, then we see that the unknottedness condition is satisfied and we effectively showed that $d_{SBM}(U,V)\leq d_{CBM}(\bd U,\bd V)$.
\end{Rem}

The next question is to what extent the inequality in proposition \ref{ineqdcdcbm} is strict. If one wants to show the reverse inequality, namely $d_c\geq d_{CBM}$ the only problem that they encounter is whether the Hamiltonian isotopies used in the definition of $d_c$ are collapsing parts of the boundaries $\bd U$ and $\bd V$ into the core of $\widehat{W}$ as such types of Hamiltonians do not yield cs-embeddings in the symplectization which are needed to calculate $d_{CBM}$. Let us explain this more thoroughly in the next remark.

\begin{Rem}\label{assum}
Recall that $d_{CBM}$ is defined only using the symplectization part and not part of the core as it is designed to measure the distance even between non-fillable contact manifolds. This means that we can relate $d_{CBM}$ and $d_{SBM}$ only if we assume that the Hamiltonian isotopies in the definition of $d_c$, by which the infimum in the definition is achieved, are compactly supported outside of a neighbourhood of the core we can show that equality holds.
\end{Rem}

We have the following proposition.

\begin{Prop}
Under the assumption discussed in remark \ref{assum}, we have $d_c(U,V)=d_{CBM}(\bd U, \bd V)$.
\end{Prop}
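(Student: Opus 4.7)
The inequality $d_c(U,V) \leq d_{CBM}(\partial U, \partial V)$ is already given by Proposition \ref{ineqdcdcbm}, so the plan is to establish the reverse inequality $d_{CBM}(\partial U, \partial V) \leq d_c(U,V)$ under the standing hypothesis of Remark \ref{assum}. Given $\epsilon > 0$, I would select $C$ with $\ln C < d_c(U,V) + \epsilon$ together with Hamiltonian isotopies $\{\Phi_t\}, \{\Psi_t\}$ of $\widehat{W}$ -- compactly supported outside a neighbourhood $\mathcal{N}$ of the core, by the standing assumption -- satisfying $\Phi_1(U)\subset CV$ and $\Psi_1(V) \subset CU$. The goal is then to promote the restrictions $\Phi_1|_{\partial U}$ and $\Psi_1|_{\partial V}$ to cs-embeddings witnessing $\alpha \prec C\beta$ and $\beta \prec C\alpha$, where $\alpha = \theta|_{\partial U}$ and $\beta = \theta|_{\partial V}$.

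The heart of the argument is to extract, from the Hamiltonian datum $\Phi_1$, a compactly supported exact one-form $\eta_1$ on $SY$ of the kind required by Definition \ref{contactembedding}. By Lemma \ref{McS}, there is a compactly supported smooth $F \colon \widehat{W} \to \mathbb{R}$ with $\Phi_1^*\theta - \theta = dF$. Because $\Phi_1$ is the identity on $\mathcal{N}$, $dF$ vanishes there, and after subtracting a locally constant function we may assume $F|_{\mathcal{N}} \equiv 0$; in particular, $\mathrm{supp}(F) \subset SY$. I would then set $\eta_1 := -d(F \circ \Phi_1^{-1})$, an exact 1-form with $\mathrm{supp}(\eta_1) \subset \Phi_1(\mathrm{supp}(F)) \subset SY$. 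A direct check using $\Phi_1^*(F\circ \Phi_1^{-1}) = F$ gives $\Phi_1^*(\theta + \eta_1) = \Phi_1^*\theta - dF = \theta$, and restricting to $\partial U = Y_\alpha$ (where $\theta|_{Y_\alpha} = \alpha$) shows that $\phi := \Phi_1|_{\partial U}$ is a cs-embedding. Since $\Phi_1$ fixes $\mathcal{N}$ and $\Phi_1(U) \subset CV$, the image satisfies $\phi(\partial U) \subset \overline{CV}\cap SY = W(C\beta)$, so $\alpha \prec C\beta$. The symmetric construction with $\Psi_1$ produces $\beta \prec C\alpha$, yielding $d_{CBM}(\alpha,\beta) \leq \ln C < d_c(U,V) + \epsilon$, and letting $\epsilon \to 0$ gives the desired inequality.

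The main obstacle -- and the reason the hypothesis of Remark \ref{assum} is indispensable -- is bookkeeping the support. Lemma \ref{McS} only guarantees that $F$ is compactly supported in $\widehat{W}$, and without the hypothesis that $\Phi_1$ fixes a neighbourhood of the core there is no reason that $\mathrm{supp}(F)$ should avoid it. In that case $\eta_1$ would fail to be compactly supported in $SY$, violating Definition \ref{contactembedding}; equivalently, parts of $\partial U$ could be collapsed towards the core by $\Phi_1$, producing a \say{cs-embedding} that is not even well-defined as a map into the symplectization. Controlling this phenomenon is precisely what the assumption rules out, after which the construction above runs without further difficulty.
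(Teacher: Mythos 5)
Your proposal is correct and follows essentially the same route as the paper: both arguments restrict the Hamiltonian isotopies to the boundaries, use the exactness of compactly supported Hamiltonian symplectomorphisms ($\Phi_1^*\theta-\theta=dF$ with $F$ supported away from the core by the assumption of Remark \ref{assum}), push the primitive forward to obtain the compactly supported exact one-form $\eta_1=-(\Phi_1^{-1})^*dF$, and verify the image condition to get the cs-embeddings needed for $d_{CBM}\leq d_c$. Your treatment of the sign and of the support bookkeeping is in fact slightly more explicit than the paper's, but the underlying idea is identical.
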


\begin{proof}
By proposition \ref{ineqdcdcbm}, it is enough to show $d_{CBM}\leq d_c$. Hence, if $U,V$ are starshaped domains, it is enough to prove that Hamiltonian isotopies of $\widehat{W}$ achieving $U\xhookrightarrow{\Phi}CV$ and $V\xhookrightarrow{\Psi}CU$ induce cs-embeddings $\phi$ and $\psi$ which can be used to calculate $d_{CBM}(\bd U,\bd V)$. Pick a hypersurface $Y_0$ as before and fix the contact form $\alpha_0=\theta|_{Y_0}$. Then $\bd U=Y_{g_1}$ and $\bd V=Y_{g_2}$, for two smooth functions $g_1,g_2:Y \rightarrow \R$. Then the corresponding forms are $\alpha=e^{g_1}\alpha_0$ and $\beta=e^{g_2}\alpha_0$. Under this formulation the goal is to show $d_{CBM}(Y_{g_1},Y_{g_2})\leq d_c(U,V)=d_c(W^\alpha,W^\beta)$.\par

Having cs-embeddings as allowed in the definition of $d_{CBM}$ means that there exist $\phi:(Y,\alpha)\rightarrow (SY,d\theta)$ and $\psi:(Y,\beta)\rightarrow (SY,d\theta)$ with $\phi^*(\theta+\eta_1)=\alpha$ and $\psi^*(\theta+\eta_2)=\beta$ for some exact compactly supported 1-forms $\eta_1,\eta_2$ and moreover $\phi(Y)\subset W(Ce^{g_2}\alpha_0)$ and $\psi(Y)\subset W(Ce^{g_1}\alpha_0)$. Given the Hamiltonian isotopies $\Phi=\Phi_t$ and $\Psi=\Psi_t$, we will show that $\Phi_1|_{s_\alpha(Y)}$ and $\Psi_1|_{s_\beta(Y)}$ are such embeddings. Hence the infimum is calculated using a possibly larger set of functions and the inequality $d_{CBM}\leq d_c$ is true.\par

The only thing left to check is that indeed $\phi=\Phi_1\circ s_\alpha$ and $\psi=\Psi_1\circ s_\beta$ have the required properties. First we need to check that $(\Phi_1)^*((\theta+\eta_1)|_{\Phi_1(s_\alpha(Y))})=\theta|_{s_\alpha(Y)}$ and $(\Psi_1)^*((\theta+\eta_2)|_{\Psi_1(s_\beta(Y))})=\theta|_{s_\beta(Y)}$ for two exact compactly supported 1-forms $\eta_1,\eta_2$.  Then it is straightforward to observe that $\phi^*\theta=\alpha$ and $\psi^*\theta=\beta$ as $s_\alpha^*\theta=\alpha$ and $s_\beta^*\theta=\beta$. We actually show this by showing something stronger, namely that this holds for the families $\Phi_t,\Psi_t$ and the families $\eta_{1,t},\eta_{2,t}$ of exact compactly supported 1-forms.\par

We know that Hamiltonian flows are exact symplectomorphisms, i.e. in this case $\Phi_t^*\theta-\theta=df_{1,t}$ and $\Psi_t^*\theta-\theta=df_{2,t}$. Furthermore, by assumption, the support of the functions $f_{1,t}$ and $f_{2,t}$ is compact as the Hamiltonian isotopy is assumed to be compactly supported outside of a neighbourhood of the core. Then we have $\Phi_t^*(\theta-\eta_{1,t})=\theta$ and $\Psi_t^*(\theta-\eta_{2,t})=\theta$ for the exact compactly supported 1-forms $\eta_{1,t}=-(\Phi_1^*)^{-1}(df_{1,t})$ and $\eta_{2,t}=-(\Psi_1^*)^{-1}(df_{2,t})$.\par

The last thing to check is that $\phi(Y)\subset W(C\beta)$ and $\psi(Y)\subset W(C\alpha)$. This is straightforward as this holds for both $\Phi_1$ and $\Psi_1$.
\end{proof}

\begin{Rem}
 If the isotopies satisfy the unknottedness condition, namely $\Psi_1 \circ \Phi_1$ in the proof is isotopic to the identity map of $\widehat{W}$ through Hamiltonian isotopies supported inside $CU$, then $d_{CBM}(\bd U,\bd V)=d_{SBM}(U,V)$. 
\end{Rem}

What we showed is that in the restricted case where the Hamiltonians are compactly supported outside of a neighbourhood of the core, $d_{CBM}$ calculates exactly $d_c$ (or $d_{SBM}$ under the unknottedness condition). So, one can view $d_{CBM}$ as the generalization of $d_c/d_{SBM}$ in the non-fillable case.\vspace{15pt}

\section{Review of contact homology} \label{Contact homology}
\text{} \\ \par

The first degree of freedom in theorem \ref{quasiisometricembedding}, namely one of the two directions of $\mathbb{R}^2$ will be the volume of the contact manifold $(Y,\lambda)$. The second degree of freedom will come from the persistence module of filtered contact homology. It is going to be the filtration level for which the unit of the algebra becomes exact. Recall that for overtwisted contact structures the unit is always exact as contact homology vanishes. A reference for this is \cite{2004math.....11014Y}. Equivalently, it is always the case that the bar corresponding to the empty word of orbits is finite, or in other words the empty word is an exact chain. The class of the empty word in various homology theories is known as the contact invariant, yet this terminology is not standard for the contact homology algebra. It will be helpful for the reader to briefly recollect the notions of contact homology here. We mostly follow \cite{Bo} as this is a concise treatment of the subject.\\ \par

Contact homology mimics the idea of Morse homology working with the action functional $\mathcal{A}: C^{\infty}(S^1,Y) \rightarrow \mathbb{R}$ defined as $$\mathcal{A}(\gamma):=\int_\gamma \alpha$$

\begin{Def}
 Let $\alpha$ be a contact form on Y. The unique vector field characterized by $d\alpha(R_\alpha,\cdot)=0$ and $\alpha(R_\alpha)=1$ is called the Reeb vector field of $\alpha$ 
\end{Def}

\begin{Def}
 A map $\gamma : \mathbb{R}/T\mathbb{Z} \rightarrow Y$ is called a closed Reeb orbit of $\alpha$ if $\gamma'(t)=R_\alpha(\gamma(t))$.
\end{Def}

The following lemma, whose proof can be found in \cite{Bo}, shows that the critical points of the functional $\mathcal{A}$ are precisely the closed Reeb orbits of $\alpha$.

\begin{Lemma}
$\gamma \in Crit(\mathcal{A})$ iff $\gamma$ closed Reeb orbit of $\alpha$ with period $\int_\gamma \alpha$
\end{Lemma}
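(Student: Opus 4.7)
The plan is to carry out the classical first-variation calculation for $\mathcal{A}$ and read off the Euler--Lagrange equation, which should be precisely the Reeb equation. First I would take a smooth one-parameter family $\gamma_s$ of loops in $C^\infty(S^1,Y)$ with $\gamma_0=\gamma$, assemble it into a map $F\colon(-\epsilon,\epsilon)\times S^1\to Y$ given by $F(s,t)=\gamma_s(t)$, and decompose the pullback as $F^*\alpha=a(s,t)\,ds+b(s,t)\,dt$, so that $\mathcal{A}(\gamma_s)=\int_0^1 b(s,t)\,dt$. Evaluating the identity $d(F^*\alpha)=F^*d\alpha$ on the pair $(\partial_s,\partial_t)$ gives $\partial_s b-\partial_t a=(F^*d\alpha)(\partial_s,\partial_t)=d\alpha_{F(s,t)}(\partial_s\gamma_s(t),\gamma_s'(t))$.

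Next I would differentiate under the integral sign and integrate the $\partial_t a$ term, which contributes nothing because $S^1$ has no boundary. Writing $V(t):=\partial_s\gamma_s(t)|_{s=0}$, this yields
$$\left.\tfrac{d}{ds}\right|_{s=0}\mathcal{A}(\gamma_s)=\int_0^1 d\alpha_{\gamma(t)}\bigl(V(t),\gamma'(t)\bigr)\,dt.$$
Hence $\gamma$ is a critical point if and only if the 1-form $d\alpha_{\gamma(t)}(\cdot,\gamma'(t))$ vanishes on $T_{\gamma(t)}Y$ for every $t$, since $V$ ranges over all smooth sections of $\gamma^*TY$.

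The contact condition now does the rest: the kernel of $d\alpha_y$ on $T_yY$ is one-dimensional and spanned by $R_\alpha(y)$. Therefore the vanishing of $d\alpha(\cdot,\gamma'(t))$ forces $\gamma'(t)=f(t)R_\alpha(\gamma(t))$ for a smooth function $f$, i.e. $\gamma$ is, up to a reparametrization, a closed orbit of $R_\alpha$ whose total period is $T=\int_0^1 f(t)\,dt$. Pairing $\alpha$ with $\gamma'$ along the critical loop gives $\mathcal{A}(\gamma)=\int_0^1\alpha(f(t)R_\alpha)\,dt=\int_0^1 f(t)\,dt=T$, as required. Conversely, any closed Reeb orbit satisfies $\gamma'\in\ker d\alpha$ and hence is manifestly a critical point with the same action computation.

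The only genuinely delicate point to watch is the reparametrization ambiguity: the functional $\mathcal{A}$ on $C^\infty(S^1,Y)$ is invariant under orientation-preserving reparametrizations of the domain, so critical loops arise as full reparametrization-equivalence classes of geometric Reeb orbits. All representatives of such a class yield the same value $\int_\gamma\alpha$, matching the intrinsic period of the underlying orbit. Aside from this, the argument is a direct variational computation and I do not foresee a substantive obstacle.
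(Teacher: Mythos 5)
Your argument is correct and is essentially the paper's proof: the paper does not prove the lemma itself but defers to the cited lecture notes of Bourgeois, whose proof is exactly this first-variation computation (pull back $\alpha$ under the variation, use $d(F^*\alpha)=F^*d\alpha$, integrate away the exact term, and invoke the contact condition to identify $\ker d\alpha$ with the Reeb direction). Your closing caveat about reparametrization (and, implicitly, loops with $\gamma'=fR_\alpha$ for non-positive $f$, such as constant loops) is the same informality present in the standard statement, which the paper itself only addresses later with the phrase ``modulo reparametrization.''
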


In a complete analogy with Morse theory, critical points have to be non-degenerate. The non vanishing condition for the Hessian in Morse theory translates in contact homology to the fact that the linearized Reeb flow over a periodic orbit does not have an eigenvalue equal to 1.

\begin{Def}
 Let $\gamma$ be a closed Reeb orbit and $\gamma(0)=p$. $d\phi_t\mid_\gamma:=\Phi_\gamma : \xi_p \rightarrow \xi_p$ is called the linearized return map or Poincar\'e return map of $\gamma$.
\end{Def}

Note that the contact condition on $\alpha$ says that $d\alpha$ is non-degenerate, i.e. $(\xi,d\alpha)$ is a symplectic vector bundle. Since the deRham differential commutes with the Lie derivative $\mathcal{L}_{R_\alpha}$ we have that the map $\Phi_\gamma: \xi_p \rightarrow \xi_p$ is symplectic.

\begin{Def}
 The closed Reeb orbit $\gamma$ is non-degenerate if the map $\Phi_\gamma: \xi_p \rightarrow \xi_p$ has no eigenvalue equal to 1.
\end{Def}

Note that $\gamma$ is non-degenerate if and only if $\gamma$ is a non-degenerate critical point of $\mathcal{A}$, modulo reparametrization. Thanks to Bourgeois \cite{Bourgeois}, we have the following lemma.

\begin{Lemma}
Fix a period threshold $T>0$. The space of contact forms $\mathcal{C}_T$ supporting $\xi$ with all orbits of period $\leq T$ being non-degenerate is open and dense in the space of contact forms supporting $\xi$.
\end{Lemma}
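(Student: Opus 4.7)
The lemma splits naturally into openness and density, so I would prove each separately.

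For openness, fix $\alpha \in \mathcal{C}_T$. Non-degenerate Reeb orbits are isolated critical points of the action functional $\mathcal{A}$ (modulo reparametrization), and the set of closed Reeb orbits of $\alpha$ of period $\leq T$ is precompact in $C^\infty(S^1,Y)$ by an Arzel\`a--Ascoli argument, since $R_\alpha$ is bounded on the compact manifold $Y$. Hence there are only finitely many such orbits $\gamma_1,\dots,\gamma_N$. For each $\gamma_i$ the Poincar\'e return map $\Phi_{\gamma_i}$ has no eigenvalue equal to $1$, so the implicit function theorem applied to the first-return map on a small transverse slice shows that for every contact form $\alpha'$ sufficiently $C^2$-close to $\alpha$ there is a unique nearby closed Reeb orbit $\gamma_i'$, which is again non-degenerate. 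A standard compactness/contradiction argument rules out the appearance of any extra orbit of period $\leq T$: a sequence of such hypothetical orbits would subconverge to a closed Reeb orbit of $\alpha$ of period $\leq T$ staying away from all $\gamma_i$, contradicting the list being complete. This produces an open neighborhood of $\alpha$ inside $\mathcal{C}_T$.

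For density, I would run a Sard--Smale transversality scheme. Fix $\alpha_0$ and introduce a separable Banach space $\mathcal{H}$ of perturbations $h:Y\to\R$ (a Floer-type space with suitably weighted $C^k$-norms, dense in $C^\infty(Y,\R)$), and consider the family $\alpha_h=e^h\alpha_0$. Form the universal moduli space
\[
\mathcal{M}_T=\bigl\{(\gamma,\tau,h)\in C^\infty(\R/\Z,Y)\times(0,T]\times\mathcal{H}\ \big|\ \dot\gamma=\tau\, R_{\alpha_h}\!\circ\gamma\bigr\},
\]
completed in the appropriate Sobolev setting. The key analytic step is to show that the linearization of the defining equation at $(\gamma,\tau,h)$ becomes surjective once variations of $h$ are allowed; the cokernel can be killed by a unique-continuation-plus-bump-function argument, using that an arbitrary perturbation of $h$ in a small neighborhood of a point of $\gamma$ translates into an arbitrary perturbation of the Reeb vector field transverse to $\xi$ there. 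Once $\mathcal{M}_T$ is a Banach manifold, the projection $\pi:\mathcal{M}_T\to\mathcal{H}$ is Fredholm of index zero, and Sard--Smale yields a residual set of $h$ for which $\pi$ is a submersion; such $h$ are precisely those for which every closed orbit of $\alpha_h$ of period $\leq T$ is non-degenerate. Density in $C^\infty$ then follows from the standard approximation result that residual subsets of $\mathcal{H}$ intersect $C^\infty(Y,\R)$ densely.

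The principal obstacle is the transversality step, especially in the presence of multiply covered orbits: an iterate of a non-degenerate simple orbit can fail to be non-degenerate, and the perturbation space must be rich enough to address this. The standard remedy is to exhaust $(0,T]$ by a sequence $T_n\nearrow T$, note that a countable intersection of residual sets is residual, and at each step localize the perturbation near the finitely many candidate simple orbits so that variations of $h$ induce independent contributions to the linearization along distinct iterates. This is exactly the argument carried out in Bourgeois's thesis, and importing it gives both openness and density, completing the proof.
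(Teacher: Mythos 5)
The paper does not actually prove this lemma: it is quoted directly from Bourgeois's thesis \cite{Bourgeois} and used as a black box, so there is no in-paper argument to compare against line by line. Your openness argument is the standard one and is essentially correct: nondegeneracy plus an Arzel\`a--Ascoli argument (together with the uniform positive lower bound on periods coming from a flow-box argument) gives finitely many orbits of period $\leq T$, the implicit function theorem applied to the return maps gives persistence and nondegeneracy of each of them, and the limiting argument excludes extra orbits --- provided you remember that your finite list must include all multiple covers of period $\leq T$, which it does under the hypothesis $\alpha\in\mathcal{C}_T$.

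The genuine gap is in the density step, precisely at the point you flag as the principal obstacle. Your proposed remedy --- localizing the perturbation near the simple orbits ``so that variations of $h$ induce independent contributions to the linearization along distinct iterates'' --- cannot work as stated: a perturbation $\alpha_h=e^h\alpha_0$ is pulled back identically along every strand of a $k$-fold cover of a simple orbit, so its effect on the linearized equation at the iterate is constrained (it acts ``diagonally''), and one cannot hit an arbitrary cokernel element of the iterated problem this way. This is exactly the same multiple-cover obstruction familiar from holomorphic curve theory, and it is why the plain Sard--Smale universal-moduli argument proves genericity of nondegeneracy only for simple orbits. The standard repair, which is what Bourgeois-type (and, classically, Robinson-type) arguments actually do, is spectral rather than a surjectivity statement at the iterates: one shows that perturbations of the contact form supported in a neighborhood of a simple orbit of period $\tau$ realize arbitrary small symplectic perturbations of its linearized return map, and then demands the generic condition that this return map have no eigenvalue which is a root of unity of order $\leq \lfloor T/\tau\rfloor$; this single condition on the simple orbit makes all of its iterates of period $\leq T$ nondegenerate simultaneously. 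With that substitution (and combining the resulting dense condition with your openness argument, Baire-style, over an exhaustion of the period range), the proof goes through; since you ultimately defer to Bourgeois's thesis the overall plan ends in the right place, but the mechanism you describe for the crucial step is not the one that works.
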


Using Baire's theorem we have in particular $ \displaystyle\bigcap_{T \geq 0} \mathcal{C}_T \neq \emptyset $ which yields the following lemma.

\begin{Lemma}\label{degeneracy}
For any contact structure $\xi$ on Y, there exists a contact form $\alpha$ for $\xi$ such that all closed orbits of $R_\alpha$ are non-degenerate.
\end{Lemma}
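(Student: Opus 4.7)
The plan is a direct application of the Baire category theorem to a countable cofinal subfamily of $\{\mathcal{C}_T\}_{T>0}$. The main points to set up are (i) that the ambient space of contact forms for $\xi$ admits a Baire topology, and (ii) that the uncountable intersection $\bigcap_{T>0}\mathcal{C}_T$ can be replaced by a countable one.

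First, I would equip the space $\mathcal{C}^{Y,\xi}$ of smooth contact forms supporting $\xi$ with the $C^\infty$ topology. Since $Y$ is closed, the space of smooth 1-forms $\Omega^1(Y)$ is a Fr\'echet space in this topology, and $\mathcal{C}^{Y,\xi}$ is an open subset of the closed subspace $\{\alpha\in\Omega^1(Y)\mid \ker\alpha=\xi,\ \alpha\text{ positive w.r.t. the co-orientation}\}$. A Fr\'echet space is completely metrizable, hence a Baire space, and so is any open subset of a closed subspace of it. Thus $\mathcal{C}^{Y,\xi}$ is Baire.

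Next, I would observe the monotonicity $T_1\le T_2\Rightarrow \mathcal{C}_{T_2}\subseteq \mathcal{C}_{T_1}$: non-degeneracy of every orbit of period $\le T_2$ implies non-degeneracy of every orbit of period $\le T_1$. Consequently
\[
\bigcap_{T>0}\mathcal{C}_T \;=\; \bigcap_{n\in\mathbb{N}}\mathcal{C}_n.
\]
By the previous lemma each $\mathcal{C}_n$ is open and dense in $\mathcal{C}^{Y,\xi}$, so Baire's theorem gives that this countable intersection is dense, and in particular non-empty. Pick any $\alpha$ in it.

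Finally, I would verify the conclusion: any closed Reeb orbit $\gamma$ of $R_\alpha$ has some finite period $T_0=\int_\gamma\alpha<\infty$, and choosing $n\in\mathbb{N}$ with $n\ge T_0$ gives $\alpha\in\mathcal{C}_n$, so $\gamma$ is non-degenerate. The only subtle step is the initial Baire-space assertion; once one is willing to work in the $C^\infty$-Fr\'echet topology this is standard, and the monotonicity remark is what makes the cited appeal to Baire legitimate (otherwise one would need an uncountable intersection of dense opens, for which Baire says nothing).
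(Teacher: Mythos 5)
Your proposal is correct and follows essentially the same route as the paper, which simply invokes Bourgeois's open-and-dense statement for each $\mathcal{C}_T$ and Baire's theorem to conclude $\bigcap_{T}\mathcal{C}_T\neq\emptyset$. Your additional remarks — that $\mathcal{C}^{Y,\xi}$ is a Baire space in the $C^\infty$ topology and that monotonicity $\mathcal{C}_{T_2}\subseteq\mathcal{C}_{T_1}$ for $T_1\le T_2$ reduces the uncountable intersection to the countable one $\bigcap_n\mathcal{C}_n$ — are exactly the details the paper leaves implicit, and they are the right ones.
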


The next natural question is what is the grading of each orbit $\gamma$. Contact homology has a relative grading by $\mathbb{Z}/2c_1(\xi)\cdot H_2(Y)$ which is absolute on null-homologous orbits. We now introduce the grading which will also help us select the generators of the algebra. The grading is highly dependent on the notion of the Conley-Zehnder index. More details about its definition and properties can be found in \cite{Bo}.  \\ \par

The Conley-Zehnder index depends on the chosen symplectic trivialization $T$ for $\xi$. If $\gamma$ is a closed Reeb orbit in $Y^{2n-1}$ its grading is given by $$|\gamma|:=CZ_T(\gamma)+n-3$$
where $T$ is a trivialization of $\xi$ along $\gamma$ and $CZ_T(\gamma)$ the Conley-Zehnder index of $\gamma$ with respect to $T$. \par

This dependence becomes more explicit when $\gamma$ is null-homologous and we restrict to trivializations which extend over the corresponding spanning surfaces. Let $\Sigma_\gamma$ be a spanning surface for $\gamma$ and $\tau$ a trivialization for $\xi$ over $\Sigma_\gamma$ which agrees with $T$ over the Reeb orbit $\gamma$. It is easy to see that if $A \in H_2(Y)$ then we can connect sum an already chosen spanning surface $\Sigma_\gamma$ for $\gamma$ with A and this will alter the Conley-Zehnder index as follows. If $\tau'$ is a trivialization for $\xi$ over $\Sigma_\gamma \# A$ which agrees with $T$ over $\gamma$ then
$$CZ_{\tau'}(\gamma,\Sigma_\gamma \# A)=CZ_{\tau}(\gamma,\Sigma_\gamma)+2 \langle c_1(\xi),A \rangle$$

Thus it becomes helpful to also introduce a grading on $H_2(M,\mathbb{Z})$. The grading of $A \in H_2(M,\mathbb{Z})$ is $|A|=-2 \langle c_1(\xi),A \rangle $.\\ \par

$CH(Y,\xi)$ is a DG-algebra which is generated by closed, non-degenerate Reeb orbits which are good. The reason for excluding the collection of bad orbits is orientation issues when gluing pseudoholomorphic curves. Let $\gamma=\gamma_1$ be a simple Reeb orbit and for $k \in \mathbb{N}$, let $\gamma_k$ be the k-fold cover of $\gamma$, namely if $f_k:S^1 \rightarrow S^1$ is given by $f_k(e^{2\pi i t})=e^{2\pi i k t}$, then $\gamma_k$ is the map $\gamma_k:= \gamma \circ f_k : S^1 \rightarrow Y$. We partition the set of closed Reeb orbits into two subsets according to their grading behavior when multiply covered. The grading of $\gamma$ can behave in two distinct ways. Either the parity of $|\gamma_k|$ is equal to the parity of $|\gamma|$ for all $k$ or the parity of $|\gamma_{2k}|$ is not equal to the parity of $|\gamma_{2k-1}|$ for all $k$.

\begin{Def}
 Orbits in the first category are called good and orbits in the second category are called bad.
\end{Def}

The reason for this is the behavior of the Conley-Zehnder index under iterations. The index can also be seen as an integer valued winding number which counts the winding of any push-off of $\gamma$ using the chosen trivialization for $\xi$ or equivalently the rotation of the eigenspaces of the linearized flow over $\gamma$.
\\ \par

The grading in monomials is defined as in any DG-algebra by $|\gamma_1\cdot \cdot \cdot \gamma_k|=|\gamma_1|+\cdot \cdot \cdot +|\gamma_k|$.

\par
In the case that the contact form is degenerate, the proof of lemma \ref{degeneracy} suggests that we have to perturb the contact form by first choosing an action threshold and after the perturbation all orbits of action $\leq T$ are non-degenerate. The formula for the grading changes according to 
$$|\gamma_p|=CZ(S_T)+\frac{1}{2}\dim(S_T)+\text{index}_p(f_T)+n-3$$
where $f_T$ the perturbing function to make $\alpha$ less symmetric. Also, $S_T:=\frac{\{q \in M | \phi^T (q) = q\}}{\sim}$, where $\sim$ the equivalence relation under the circle action induced by the Reeb flow $\phi^t$ and $p$ the critical point corresponding to one of the Reeb orbits $\gamma_p$ created after perturbation by $f_T$ corresponding to p. ~\\

The last step in defining the chain complex is to define the differential $$\partial:CC_*(Y,\xi) \rightarrow CC_{*-1}(Y,\xi)$$.

As in the Morse case, we have to count certain \say{trajectories} between our critical points which in this case are Reeb orbits. As it is the case with all field theories, these trajectories will be special surfaces in the symplectization asymptotic to our orbits. In symplectic field theory, these trajectories are pseudoholomorphic curves of genus zero with one positive end and finitely many negative ends. A Stokes' theorem argument shows that $\partial$ reduces the action, so we get a restriction on how many negative ends can exist and how large their periods can be.\\~

The difficulty that arises here is to define a proper count of pseudoholomorphic curves and thus obtain coefficients for the differential. For this to work, we need the associated moduli space to be cut out transversely, which means that the relevant linearized operator is everywhere surjective, i.e. we have the regularity property. We will describe the situation when we work under this rare favorable situation, but we refer to \cite{MR3981989} for the full picture. In this work, the only count we need to make is for the lowest action orbit which needs to be bounding a unique pseudoholomorphic plane. We will explain why we can obtain a meaningful count in this special case, using the main theorem from \cite{MR3981989} when we describe how to obtain the pseudoholomorphic plane. ~\\ 

Contact homology turns out to be invariant under different choices of the supporting contact form $\alpha$ on $Y$ and of the chosen compatible almost complex structure on $SY$. Thus it is both permitted and easier for us to implicitly make a choice of form in order to describe the theory concretely. In words, we have picked a contact form $\alpha$ on $Y$ which induces a splitting $(SY=\mathbb{R}_+\times Y,d\theta=d(r\alpha))$.\par

We consider an almost complex structure on $(SY,d(r\alpha))$ cylindrical in the following sense.
\begin{itemize}
    \item $J$ is $\mathbb{R}$-invariant.
    \item $J(\partial_r)=R_\alpha$.
    \item $J(\xi)=\xi$ is compatible with $\alpha$, i.e. $g(\cdot,\cdot)=d\alpha(\cdot,J\cdot)$ is an inner product.
\end{itemize}

\begin{Def}
 A map $u:(S^2,j) \rightarrow (SY,J)$ is called pseudoholomorphic if $du\circ j=J\circ du$, i.e. $du$ is complex linear.
\end{Def}

Note that up to isomorphism $S^2$ admits a unique complex structure $j$. In other words, any complex surface of genus zero is biholomorphic to the Riemann sphere $\mathbb{C}P^1$. In the following, we consider punctured holomorphic spheres inside the symplectization $SY$. By this we mean that we consider curves $\Sigma=S^2-\{x,y_1,...,y_k\}$ such that if $(r,\theta)$ are polar coordinates centered at each puncture and $u(r,\theta)=(a(r,\theta),f(r,\theta))\in SY=\mathbb{R}\times Y$ we have
$$\lim_{r \rightarrow 0}a(r,\theta)=\begin{cases}
+\infty, \text{          for the puncture $x$  }\\
-\infty, \text{       for the punctures   } y_i
\end{cases}$$

$$\lim_{r \rightarrow 0}f(r,\theta)=\begin{cases}
\gamma(-\frac{T}{2\pi}\theta), \text{          for the puncture $x$  }\\
\gamma_i(\frac{T_i}{2\pi}\theta), \text{       for the punctures   } y_i
\end{cases}$$ \\~\\ \vspace{5pt}

Despite the fact that pseudoholomorphic curves were an effective tool when studying closed symplectic manifolds (Gromov-Witten theory), for quite a while it was not understood how pseudoholomorphic curves behave in non compact symplectic manifolds. Hofer in \cite{Ho} in a successful effort to prove the Weinstein conjecture for $S^3$, showed that bounds on the energy of such object, yield and interesting behavior and force such curves to be asymptotic to Reeb orbits $\gamma$, $\gamma_i$ and this asymptotic behavior is the requirement we ask for the curves we will be counting. Their periods will be denoted $T$ for the orbit corresponding to the positive puncture $x$ and $T_i$ for the rest of the punctures corresponding to $y_i$. \\~

Recall that in Morse homology we have to count trajectories from critical points of index $s$ to critical points of index $s-1$. This is also what we do here. The fact that we work with orbits and not points gives rise to an extra difficulty. Orbits can be multiply covered, so that raises new issues when a complex curve is asymptotic to them. This is of course not an issue in Morse homology. We denote by $m(\gamma_j)$ the multiplicity of $\gamma_j$ over its underlying simple Reeb orbit. The coefficients involved in the differential are related to the moduli spaces of such punctured pseudoholomorphic curves.\\ \par

Let $\widehat{\mathcal{M}}(\gamma;\gamma_1,...,\gamma_k)$ be the set pseudoholomorphic curves satisfying the asymptotic conditions above. We define an equivalence relation $\sim$ on this set as follows.

\begin{Def}
The maps $u : (S^2-\{x, y_1, . . . , y_k\}, j) \rightarrow (SY, J)$ and $u': (S^2 -\{x', y_1', . . . , y_k'\}, j') \rightarrow (SY, J)$ are equivalent if
and only if there exists a biholomorphism $h : (S^2, j) \rightarrow (S^2, j')$ so that $h(x) = x', h(y_i) = y_i'
\text{  for  }
i = 1, . . . , k$, and $u = u' \circ h$.
\end{Def}

We denote  $\mathcal{M}(\gamma;\gamma_1,...,\gamma_k)=\widehat{\mathcal{M}}(\gamma;\gamma_1,...,\gamma_k)/\sim$. The set of equivalence classes $\mathcal{M}(\gamma;\gamma_1,...,\gamma_k)$ has a natural $\mathbb{R}$-action induced by the translation in the $\mathbb{R}$-coordinate. We will define the differential by counting certain generalized flowlines with appropriate associated weights. In order for this count to be finite, the fact that $\mathcal{M}(\gamma;\gamma_1,...,\gamma_k)/\mathbb{R}$ has a nice topological structure, i.e. that it is a compact 0-dimensional manifold is required. In general, this is not the case. A way to overcome this is the main theorem from \cite{MR3981989}. As stated previously though, in this presentation we assume that the favorable assumption of transversality holds. We have the following.

\begin{Lemma}
$\mathcal{M}(\gamma;\gamma_1,...,\gamma_k)/\mathbb{R}$ is a union of compact manifolds with corners along a codimension 1 branching locus. Each such manifold has a rational weight, so that near each branching point, the sum of all entering weights
equals the sum of all exiting weights. Moreover, each manifold with corner in this union has dimension
$$(n-3)(k-1)+CZ(\gamma)+\sum_{i=1}^k CZ(\gamma_i) +2c_1^{rel}(\xi,\Sigma)-1$$
where $c_1^{rel}(\xi,\Sigma)$ is the first Chern class of $\xi$ on $\Sigma$, relative to the fixed trivializations of $\xi$ along the closed
Reeb orbits at the punctures.
\end{Lemma}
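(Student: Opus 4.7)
The plan is to split the statement into two independent assertions: the explicit dimension formula, which is a Fredholm index calculation of Riemann--Roch type, and the structural claim that $\mathcal{M}(\gamma;\gamma_1,\ldots,\gamma_k)/\mathbb{R}$ inherits a compact, weighted branched manifold-with-corners structure along a codimension-one branching locus, which is the output of the virtual fundamental cycle machinery of \cite{MR3981989}.

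First I would realize $\widehat{\mathcal{M}}(\gamma;\gamma_1,\ldots,\gamma_k)$ as the zero set of the nonlinear Cauchy--Riemann operator on a Banach manifold of maps $S^2\setminus\{x,y_1,\ldots,y_k\}\to SY$ with the prescribed cylindrical asymptotics on $(\gamma;\gamma_1,\ldots,\gamma_k)$. At a smooth solution the linearization is a Fredholm operator of cylindrical-end type, and its index is given by the Riemann--Roch formula for punctured surfaces with asymptotic operators (Lockhart--McOwen, Schwarz, Bourgeois) in terms of $\chi(\Sigma)=1-k$, the relative first Chern class $c_1^{\mathrm{rel}}(\xi,\Sigma)$, and the Conley--Zehnder indices of the asymptotic operators at the $k+1$ punctures determined by the linearized Reeb flow. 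Feeding these quantities in, applying the paper's sign conventions for positive versus negative ends, and subtracting one for the free $\mathbb{R}$-translation action on $SY$ produces the advertised dimension $(n-3)(k-1)+CZ(\gamma)+\sum_{i}CZ(\gamma_i)+2c_1^{\mathrm{rel}}(\xi,\Sigma)-1$. For the low-puncture cases ($k+1<3$) I would additionally subtract the dimension of the relevant automorphism group of the domain.

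Next I would invoke the SFT compactness theorem of Bourgeois--Eliashberg--Hofer--Wysocki--Zehnder to compactify $\mathcal{M}(\gamma;\gamma_1,\ldots,\gamma_k)/\mathbb{R}$ by pseudoholomorphic buildings. The resulting compactification is stratified by the level structure of the buildings, and its codimension-one stratum consists precisely of two-level buildings; this is the natural candidate for the branching locus in the statement. At this stage one has only a stratified topological space with the expected combinatorics, not a smooth structure.

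The main obstacle is promoting this stratified compactification to a weighted branched smooth manifold with corners, because transversality for $\bar\partial_J$ is generically unachievable at multiply covered curves and cannot be repaired by perturbing $J$ alone within the class of $\mathbb{R}$-invariant cylindrical almost complex structures compatible with the symplectization. To overcome this I would import the implicit atlas and virtual fundamental cycle technology of Pardon \cite{MR3981989}: one constructs a coherent system of implicit atlases on all strata (including boundary strata), extracts from it a $\mathbb{Q}$-valued virtual fundamental class, and observes that in the top virtual dimension this class is represented by a weighted branched manifold with corners whose rational weights are the local multiplicities coming from the atlas charts. The balancing of incoming and outgoing weights at each codimension-one branching point is a direct consequence of the gluing axioms combined with the compatibility of the atlases across the codimension-one strata, and packaging this compatibility is the technical heart of \cite{MR3981989}, and hence of the lemma.
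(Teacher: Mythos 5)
The paper itself does not prove this lemma: it appears in the review section on contact homology, is recalled from the literature (essentially \cite{Bo}) under the paper's standing assumption that transversality holds, with the rational weights given by $1/|\mathrm{Aut}(u)|$, and with all foundational issues deferred to \cite{MR3981989}. Your outline --- realizing the moduli space as the zero set of a Cauchy--Riemann operator, computing the Fredholm index by Riemann--Roch for punctured spheres with cylindrical ends, compactifying by SFT compactness, and then invoking abstract perturbation theory for the weighted branched structure --- is precisely the standard route in the sources the paper points to, so in spirit you are reconstructing the proof the paper cites rather than taking a different path.

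Two substantive caveats. First, attributing the weighted-branched-manifold-with-corners structure to \cite{MR3981989} is not accurate: Pardon's implicit-atlas machinery produces $\mathbb{Q}$-valued virtual fundamental cycles and algebraic counts, and deliberately avoids constructing geometric representatives, so it does not output a branched manifold with corners whose weights balance along a codimension-one locus. That geometric picture comes either from assuming transversality (as this paper does, in which case the weights are reciprocals of automorphism-group orders and the balancing is the usual statement about compactified one-dimensional moduli spaces, in the spirit of the original EGH/Bourgeois formulation) or from polyfold-style multivalued perturbations in the sense of Hofer--Wysocki--Zehnder; your final paragraph conflates these with Pardon's construction. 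Second, your index computation, carried out with the usual conventions, gives $(n-3)(1-k)+CZ(\gamma)-\sum_i CZ(\gamma_i)+2c_1^{rel}(\xi,\Sigma)-1$ after the $\mathbb{R}$-quotient, which is consistent with the formula the paper uses later, namely $\dim\mathcal{M}^A(\gamma;\gamma_1,\ldots,\gamma_k)=|\gamma|-\sum_i|\gamma_i|+2\langle c_1(\xi),A\rangle$ with $|\gamma|=CZ(\gamma)+n-3$, but it does not literally match the signs printed in the lemma ($+(n-3)(k-1)$ and $+\sum_i CZ(\gamma_i)$). Asserting that ``the paper's sign conventions'' produce the advertised expression glosses over this; you should either state the convention for negative punctures that flips those signs or flag the discrepancy explicitly rather than absorb it silently.
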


The rational weights take into account the group of automorphisms of the holomorphic curves. If $u$ is a rigid element in $\mathcal{M}(\gamma;\gamma_1,...,\gamma_k)/\mathbb{R}$ of dimension 0, then the weight of $u$ is $\frac{1}{k}$, where $k$ is the order of the
automorphism group of $u$.\\ \par

In order to be able to show that the operator $\partial$ is indeed a differential, we need to understand the boundary of $\mathcal{M}(\gamma;\gamma_1,...,\gamma_k)$, i.e. possible degenerations of such pseudoholomorphic curves.

\begin{Def}
 A broken pseudoholomorphic curve is a set $C_1,...,C_N$ of finite collections of punctured pseudoholomorphic curves $C_i=\{u_{1i},...,u_{li}\}$ such that the negative punctures/orbits of $C_i$ coincide with the positive punctures/orbits of $C_{i-1}$. Moreover, the only positive puncture of $C_1$ corresponds to $\gamma$ and the negative orbits of $C_N$ correspond to $\gamma_1,...,\gamma_k$.
\end{Def}

The boundary of $\mathcal{M}(\gamma;\gamma_1,...,\gamma_k)$ is made up of broken pseudoholomorphic curves. Energy bounds on punctured pseudoholomorphic curves help to control the degeneration behavior and show that $\mathcal{M}(\gamma;\gamma_1,...,\gamma_k)$ together with its boundary broken pseudoholomorphic curves is a compact manifold. See \cite{2003math......8183B} section 10.~\\

A class $A\in H_2(Y,\mathbb{Z})$ is associated to each curve in $\mathcal{M}(\gamma;\gamma_1,...,\gamma_k)$. This has the effect that we can decompose $\mathcal{M}(\gamma;\gamma_1,...,\gamma_k)$ into the connected components corresponding to $A$. Then, we denote by $\mathcal{M}^A(\gamma;\gamma_1,...,\gamma_k)$ the corresponding connected component of the moduli space. This association is essential in order to define the differential of the chain complex, yet the only case we explain here is the case of null homologous orbits. This is important for this work as the most essential orbit, i.e. the one providing the $l$-invariant, is contractible. We do not need to explain all cases here as there is no direct reference to it in this text. A thorough explanation of this can be found in \cite{Bo}, lecture 2. \par

Assuming that the orbit $\gamma$ is null-homologous, we pick a spanning surface $\Sigma_\gamma$ and we use it to trivialize $\xi$ over $\gamma$. Now, to a pseudoholomorphic curve in $\mathcal{M}(\gamma;\gamma_1,...,\gamma_k)$, we glue the surfaces we obtained for each of the orbits in $\{\gamma,\gamma_1,...,\gamma_k\}$ and we obtain a closed surface. We let $A\in H_2(Y,\mathbb{Z})$ be its homology class.\\ 

Using the trivializations discussed before, the formula for the dimension of the corresponding connected component of $\mathcal{M}(\gamma;\gamma_1,...,\gamma_k)$, which we denote $\mathcal{M}^A(\gamma;\gamma_1,...,\gamma_k)$, is
$$\dim\mathcal{M}^A(\gamma;\gamma_1,...,\gamma_k)=|\gamma|-\sum_{i=1}^k|\gamma_i|+2 \langle c_1(\xi),A \rangle$$

We define the coefficients of the differential. Let $\Gamma:=\gamma_1\gamma_2...\gamma_k$. First, define the numbers
$$n_{\gamma,\Gamma}^A=\begin{cases}
& 0,\hspace{142pt} \text{  if           } \dim\mathcal{M}^A(\gamma;\gamma_1,...,\gamma_k) \neq 1    \\
& \displaystyle\sum_{c \in \mathcal{M}^A(\gamma;\gamma_1,...,\gamma_k)/\mathbb{R}}\frac{1}{|Aut(c)|}, \hspace{25pt} \text{  if           } \dim\mathcal{M}^A(\gamma;\gamma_1,...,\gamma_k) = 1
\end{cases}$$
where $|Aut(c)|$ is the order of the automorphism group of the rigid element $c$ of the moduli space. These numbers count rigid pseudoholomorphic curves positively asymptotic to $\gamma$ and negatively asymptotic to $\Gamma$ in the homology class $A \in H_2(M,\mathbb{Z})$. These numbers are finite and nonzero for finitely many classes $A$ due to the fact that the moduli spaces involved are compact. ~\\ 

The coefficients of the differential are now defined to be 

$$n_{\gamma,\Gamma}=\displaystyle\sum_{A \in H_2(M,\mathbb{Z})} n_{\gamma,\Gamma}^Ae^{\pi(A)}\in \mathbb{Q}[H_2(M,\mathbb{Z})/\mathcal{R}]$$
where $\mathcal{R}$ a submodule of $H_2(M,\mathbb{Z})$ with zero grading and $\pi: H_2(M,\mathbb{Z}) \rightarrow H_2(M,\mathbb{Z})/\mathcal{R}$ the natural projection. The usual choice is $\mathcal{R}=H_2(M;\mathbb{Z})$ which gives coefficients in $\mathbb{Q}$.

~\\
The differential is defined by $$\partial \gamma:=m(\gamma)\sum_{\Gamma=(\underbrace{\gamma_1,...,\gamma_1}_{i_1},...,\underbrace{\gamma_k,...,\gamma_k}_{i_k})} n_{\gamma,\Gamma}\underbrace{\gamma_1\cdot \cdot \gamma_1}_{i_1} \cdot \cdot \cdot \underbrace{\gamma_k\cdot \cdot \gamma_k}_{i_k}$$
where we recall that $m(\gamma)$ is the multiplicity of the orbit at $+\infty$, $\gamma$. ~\\

We extend the differential to monomials using the graded Leibniz's rule and to any element of the DG-algebra by linearity. The unit of this DG-algebra is $\Gamma$ for which $k=0$. Contact homology is the homology of this complex.\\ \par 

Contact homology is a functor from the category with objects contact manifolds and morphisms deformation classes exact symplectic cobordisms to the category with objects supercommutative $\mathbb{Z}/2$-graded unital $\mathbb{Q}$-algebras and morphisms graded unital $\mathbb{Q}$-algebra homomorphisms. In the proof of lemma \ref{linvineq}, we will be interested in a refinement which keeps track of the action of orbits. Since the differential decreases action, we are allowed to consider the subcomplex $CC_*(Y,\alpha)^{\leq t} \subseteq CC_*(Y,\alpha)$ consisting of all orbits of action $\leq t$. Notice that this refinement depends on the chosen contact form. Whenever $\frac{\alpha}{t} \geq \frac{\alpha'}{t'}$ pointwise, we get an induced functorial map $CH_*(\alpha)^{\leq t} \rightarrow CH_*(\alpha')^{\leq t'}$, see for example \cite{MR3981989} section 1.7. We will be mostly interested in the case $t=t'$.

\section{Proof of the bi-Lipschitz embedding} \label{pf}

The way to prove theorem \ref{quasiisometricembedding} in 3 dimensions will be to construct a 2-parameter family of overtwisted contact forms on $Y$ modifying the Lutz twist construction as found in \cite{Wendl}. The first parameter will be related to the volume of $Y$ and the second one to the $l$-invariant whose definition is immediately provided as it will be of often use in the rest of this section. Let $\alpha$ be a contact form supporting an overtwisted contact structure on $Y$. Since $CH(Y,\xi)$ vanishes, there is some filtration level for which a primitive $x$ for the unit element appears. This filtration is basically the action whose definition we now recall. Note that by definition, talking about the action always assumes a choice of a contact form.\\ \par 

An element of $CH(Y,\xi)$ has the form $y=\gamma_1^{a_1}\cdot \cdot \cdot \gamma_k^{a_k}+\cdot \cdot \cdot+\gamma_m^{a_m}\cdot \cdot \cdot \gamma_n^{a_n}$, i.e. it is a polynomial in good orbits. In the beginning of section \ref{Contact homology}, we only defined the action of an orbit and not the action of an element of $CH(Y,\xi)$. This definition is provided below.

\begin{Def}
The action of an element $y \in CH(Y,\xi)$ is defined as follows. If $y$ is a monomial in good orbits, i.e $y=\gamma_1^{a_1}\cdot \cdot \cdot \gamma_k^{a_k}$, then $$\mathcal{A}(y):=\displaystyle\sum_{i=1}^{k} a_i\mathcal{A}(\gamma_i)$$
If $y=\Gamma_1+\cdot \cdot \cdot +\Gamma_n$, where $\Gamma_i=\gamma_{i_1}^{i_{a_1}}\cdot \cdot \cdot \gamma_{i_m}^{i_{a_m}}$, i.e $y$ is a polynomial in good orbits, then $$\mathcal{A}(y):=\max_{j}\big\{\mathcal{A}(\Gamma_j)\big\}$$

\end{Def}

We are now ready to provide the definition of the $l$-invariant of a contact form $\alpha$.

\begin{Def}\label{linvt}
We call the lowest action $\mathcal{A}(x)$ of such a primitive $x$ the $l$-invariant.  
\end{Def}

\begin{Rem}
 The $l$-invariant is an invariant of a contact form $\alpha$ supporting a contact structure $\xi$. In the cases that we need to emphasize this, we will use the notation $l(\alpha)$.
\end{Rem}

The section is organized as follows. In subsection \ref{Lutztwist} we recall the notion of a Lutz twist in 3 dimensions. In \ref{Wendlwork} we recall some of Wendl's work and adapt results to our case. In \ref{2family} we construct the 2-parameter family needed in the proof and finally in \ref{pfofqiemb} we prove the main theorem, i.e. theorem \ref{quasiisometricembedding}, in the 3-dimensional case. The higher dimensional case is discussed in section \ref{Higherdim}.

\begin{Rem}
 We provide a brief list of the parameters involved in the constructions of this section and brief explanations. This will not make sense unless the reader arrives to the part of this work where they are needed. We only mention them here in order to state explicitly that the constructions below will only make sense for sufficiently small choice of these parameters.
 \begin{itemize}
     \item $\delta$ : Used in the perturbation of the horizontal Morse-Bott torus. Defined in page \hyperlink{delta}{29}.
     \item $\delta'$ : Used in the perturbation in order to set up the contact homology chain complex up to a certain action threshold $\mathcal{A}_0$. Defined in page \hyperlink{delta'}{30}.
     \item $A$ : Lowest action of Reeb orbit before the Lutz twist. Discussed in page \hyperlink{B}{35}.
     \item $B$ : Action of the family of orbits in the neighbourhood of a transverse knot $K_1$. Discussed in page \hyperlink{B}{35}.
      \item $\epsilon$ : It is $\min\{\ln(A),\ln(B)\}$ and will define the parameter domain of the 2-parameter family of 1-forms to be defined.
     \item $\varepsilon$ : Radius of specific Lutz tube we use. See section \ref{Lutztwist}.
     \item $\varepsilon_0$ : First time when behavior of $h_1(r),h_2(r)$ changes. Defined in subsection \ref{rineq}.
     \item $\delta_0$ : Defines smoothing interval. Defined in subsection \ref{rineq}.
     \item $\delta_1$ : Chosen so as the path of the specific Lutz tube to be continuous (related to $h_1(r)$). See subsection \ref{rineq}
     \item $\delta_2$ : Chosen so as the path of the specific Lutz tube to be continuous (related to $h_2(r)$). See subsection \ref{rineq}.
 \end{itemize}
\end{Rem}

\subsection{The Lutz twist} \label{Lutztwist}\text{} \\ \par

The first step towards the construction of the 2-parameter family is to perform a full Lutz twist. Although by now these constructions are standard and \cite{geiges_2008},\cite{Wendl} are excellent references, we include it here for completeness.\\ \par

Let $Y$ be a co-oriented contact 3-manifold and $P\subset Y$ an embedded $S^1$ positively transverse to $\xi$. Let $S^1 \times D^2$ be a tubular neighbourhood of $P$ in $Y$ (i.e. $P=S^1 \times \{0\}$) such that $\xi=ker(d\theta+r^2d\phi)$ where $\partial_\theta$ agrees with the positive orientation on $P$. This is possible to consider due to the fact that in a neighborhood of a transverse knot, the local model for $\xi$ looks like the one described. See for instance \cite{geiges_2008}, example 2.5.16. The radius of the disk factor is assumed to be $\varepsilon$ which will be sufficiently small. Performing a Lutz twist along $P$ means that we replace the contact structure $\xi=ker(d\theta+r^2d\phi)$ inside $S^1 \times D^2$ by the structure $\xi'=ker(h_1(r)d\theta+h_2(r)d\phi)$. The functions $h_1,h_2$ are only required to satisfy the following 3 properties  \vspace{5pt}
\label{properties}
\begin{itemize}\label{conditions}
    \item $h_1(r)=\pm 1$ and $h_2(r)=\pm r^2$ near $r=0$. 
    \item $h_1(r)=1$ and $h_2(r)=r^2$ near $r=\varepsilon$, where the radius of $D^2$ is $\varepsilon$.
    \item $(h_1(r),h_2(r))$ is never parallel to $(h_1'(r),h_2'(r))$ for $r \neq 0$.
\end{itemize} \vspace{5pt} \par

The \say{$+$} version is called a full Lutz twist, whereas the \say{$-$} version is called a half Lutz twist. The result of a Lutz twist is always an overtwisted contact structure as it is straightforward to see that we create an overtwisted disk inside the tubular neighbourhood of $P$ where for the smaller of the two radii, $r_0$ such that $h_2(r_0)=0$ the Legendrian/meridian $\{(0,r_0,\phi) \mid \phi \in [0,2\pi)\}$ is the boundary of an overtwisted disk.\\ \par

\begin{figure}[h!]
    \centering
    \includegraphics[scale=0.8]{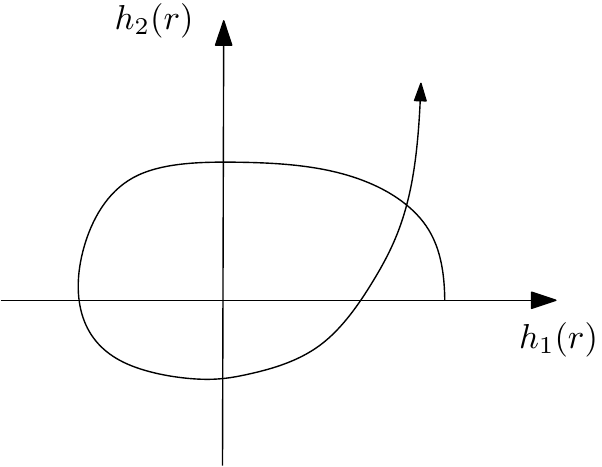}
    \caption{Description of a general full Lutz twist}
    \label{fig:Lutz0}
\end{figure}

\begin{Rem}
 A Lutz twist is primarily a contact topological alteration of the structure. In this work we would like to view it as a dynamical one. To this end, although it is not always true that in a local neighborhood of a transverse knot the contact form is precisely the standard $d\theta+r^2d\phi$, since it is true that two contact forms with the same kernel differ by multiplication by a smooth positive function, we can can make the form locally to look like the standard one by multiplying by an appropriate positive smooth function. In this work, using the above justification, anytime we are interested in working with a transverse knot, we assume that in a neighborhood of it the form looks like the standard one.
\end{Rem}

The advantage of a full twist is that it does not alter the homotopy class of $\xi$ as a 2-plane field thus giving us, according to Eliashberg \cite{Eli}, the unique up to isotopy overtwisted contact representative of the original plane distribution. There is a generalization of the full Lutz twist in higher dimensions according to \cite{EP1},\cite{EP2} and as it is shown there, the contact structure obtained after the generalized Lutz twist is homotopic to the original one through almost contact structures, i.e. the two contact structures are formally homotopic. This, combined with corollary 1.3 from \cite{MR3455235} implies that after the twist, we still get up to isotopy, the unique overtwisted representative of the homotopy class of $\xi$ that we started with. A half Lutz twist generalization is also discussed, yet it seems that this is not a natural way to define a half Lutz twist in higher dimensions as it not only changes the contact structure, but the original smooth manifold itself. Although using this approach of thinking about Lutz twists in higher dimensions seems very natural, negatively stabilized open books help us more. There is a very helpful pseudoholomorphic curves analysis in \cite{MR2646902} which we use to generalize the result to higher dimensions.

\subsection{Recollection of Wendl's work and adaptation to our case} \label{Wendlwork} \text{}\\ \par

Our goal is to control the volume and the $l$-invariant of contact forms supporting the overtwisted contact structure $\xi$ on $Y$. These are the two degrees of freedom of $\mathbb{R}^2$. Although the first is easy to do just by multiplying the contact form, for the second one we have to work more. We have to control the $l$-invariant as defined in definition \ref{linvt}. In order to do this, we have to adapt the Lutz twist construction in a way that allows us to control the action of the lowest action orbit which bounds a unique pseudoholomorphic plane. ~\\

In this we mainly follow \cite{Wendl}, subsection 4.2. Our first goal will be to understand the Reeb dynamics within the Lutz tube and then the foliation by pseudoholomorphic curves of the part of the symplectization of our overtwisted contact manifold $Y$ that corresponds to the Lutz tube $S^1\times D^2$. In what follows, we work in $S^1 \times int(D^2)$ assuming that the radius of $D^2$ is equal to $\varepsilon$. \\ \par

Let's let $X=S^1 \times int(D^2)$ equipped with a contact form described as above, $h_1(r)d\theta+h_2(r)d\phi$. We pick a suitable basis $\{v_1,v_2\}$ for $\xi$ on the complement of the core circle $P=S^1$, where $v_1=\partial_r$, $v_2=\frac{1}{D(r)}(-h_2(r)\partial_\theta+h_1(r)\partial_\phi)$ and $D(r)=h_1(r)h_2'(r)-h_1'(r)h_2(r)$. \\ \par

We define a complex structure $J:\xi \rightarrow \xi$, given by $v_1 \mapsto \beta(r)v_2$ and $v_2 \mapsto -\frac{1}{\beta(r)}v_1$, where $\beta$ a smooth function chosen so as to ensure smoothness of $J$ near the core $P$. So we can assume that is different than 1 only sufficiently close to $P$.\par

Naturally, we let $\Tilde{J}$ be the unique $\mathbb{R}$-invariant compatible almost complex structure on $\mathbb{R}\times X$ determined by $J$ and $\alpha=h_1(r)d\theta+h_2(r)d\phi$. \\ \par

The following proposition describes the local Reeb dynamics within $S^1\times int(D^2)$.

\begin{Prop}\label{dynamics}
Let $r_0>0$ and $\frac{h_1'(r_0)}{2\pi h_2'(r_0)}=\frac{p}{q} \in \mathbb{Q}\cup \{\infty\}$, where $p,q\in \mathbb{Z}$ relatively prime, $ \sign(p)=\sign(h_1'(r_0))$ and $ \sign(q)=\sign(h_2'(r_0))$. Then the torus $$L_{r_0}:=\{r=r_0\}$$ is foliated by orbits of the form 

$$x(t)=\Big(\theta_0+\frac{h_2'(r_0)}{D(r_0)}t,r_0,\phi_0\frac{h_1'(r_0)}{D(r_0)}t\Big)=\Big(\theta_0 +\frac{q}{T}t,r_0,\phi_0-\frac{2\pi p}{T}t\Big)$$ 

all having minimal period 

$$T=q\frac{D(r_0)}{h_2'(r_0)}=2\pi p \frac{D(r_0)}{h_1'(r_0)}$$

If $h_1'(r_0)=p=0$ or $h_2'(r_0)=q=0$ pick whichever formula makes sense. The torus $L_{r_0}$ is Morse-Bott iff $\frac{h_1'(r)}{h_2'(r)}$ (or its reciprocal if needed) has non vanishing derivative at ${r=r_0}$. Moreover, $P:=S^1\times \{0\}$ is a closed orbit f minimal period $T=|h_1(0)|$. For $k\in\mathbb{Z}$ its k-fold cover $P^k$ is degenerate iff $\frac{k h_1''(0)}{2\pi h_2''(0)}\in \mathbb{Z}$ and otherwise it has $$CZ_{\Phi_0}(P^k)=2\left \lfloor{-\frac{k h_1''(0)}{2\pi h_2''(0)}} \right \rfloor +1$$
where $\Phi_0$ the natural symplectic trivialization of $\xi$ along $P$ provided by the coordinates.
\end{Prop}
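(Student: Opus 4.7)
The plan is to compute everything explicitly in the given coordinates. First I would solve for the Reeb vector field $R$ in the form $R = a\,\partial_\theta + b\,\partial_r + c\,\partial_\phi$ using the defining equations $\alpha(R) = 1$ and $d\alpha(R,\cdot) = 0$. The equations $d\alpha(R,\partial_\theta) = -h_1'(r)\,b = 0$ and $d\alpha(R,\partial_\phi) = -h_2'(r)\,b = 0$ force $b = 0$ away from the core, and the remaining linear system for $a, c$ has determinant $D(r) = h_1 h_2' - h_1' h_2$, nonzero by the contact condition; Cramer's rule then gives the explicit formula $R = \tfrac{h_2'(r)}{D(r)}\,\partial_\theta - \tfrac{h_1'(r)}{D(r)}\,\partial_\phi$. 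In particular $R$ is tangent to every torus $L_{r_0}$ and the flow on $L_{r_0}$ is linear in $(\theta,\phi)$. Requiring a flow line to close translates (with $\theta \in \mathbb{R}/\mathbb{Z}$) into the rationality condition $h_1'(r_0)/(2\pi h_2'(r_0)) = p/q$ in lowest terms, and matching winding numbers $q$ in $\theta$ and $p$ in $\phi$ yields the two equivalent formulas for the minimal period $T$.

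For the Morse--Bott property, I would linearize the Poincar\'e return map of a closed orbit $x(t) \subset L_{r_0}$. The translation vectors $\partial_\theta, \partial_\phi$ generate the two-dimensional family of closed orbits within $L_{r_0}$, so they sit inside the generalized $1$-eigenspace of the linearized return map. Morse--Bott-ness is then equivalent to the transverse direction $\partial_r$ not contributing an extra $1$-eigenvector, which after a change of basis amounts to the slope function $r \mapsto h_1'(r)/h_2'(r)$ (or its reciprocal, if $h_2'(r_0)$ vanishes) having nonzero derivative at $r_0$: when the slope varies to first order in $r$, nearby tori carry orbits of different rotation number and no transverse infinitesimal deformation closes up in time $T$.

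For the core $P = S^1 \times \{0\}$, the polar formula for $R$ degenerates, so I would pass to Cartesian coordinates $(x,y) = (r\cos\phi, r\sin\phi)$, in which $h_2(r)\,d\phi = \tfrac{h_2(r)}{r^2}(x\,dy - y\,dx)$ extends smoothly since $h_2(r)/r^2$ does. Direct substitution gives $\alpha|_P = h_1(0)\,d\theta$ and $R|_P = \tfrac{1}{h_1(0)}\,\partial_\theta$, so $P$ is a closed Reeb orbit of minimal period $|h_1(0)|$. The key calculation is the linearization of $R$ on $\xi|_P = \mathrm{span}(\partial_x, \partial_y)$: Taylor expanding $h_1(r) = h_1(0) + \tfrac{1}{2} h_1''(0) r^2 + O(r^4)$ and $h_2(r) = \tfrac{1}{2} h_2''(0) r^2 + O(r^4)$, then solving $\alpha(R)=1$ and $d\alpha(R,\cdot)=0$ to first order in $(x,y)$, one finds that the linearized flow rotates $(x,y)$ at constant angular speed $\omega = h_1''(0)/\bigl(h_1(0)\,h_2''(0)\bigr)$. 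Hence after time $k|h_1(0)|$ the linearized return map is the rotation by angle $\omega\, k\, |h_1(0)| = k\,h_1''(0)/h_2''(0)$, and $P^k$ is degenerate precisely when this rotation fixes a nonzero vector, i.e.\ when $k\,h_1''(0)/(2\pi h_2''(0)) \in \mathbb{Z}$. Using the standard convention that the Conley--Zehnder index of the rotation path $t \mapsto R_{2\pi \nu t}$ with $\nu \notin \mathbb{Z}$ is $2\lfloor \nu \rfloor + 1$, and accounting for the orientation of $\Phi_0 = (\partial_x,\partial_y)$ relative to the sign of $h_1(0)\,h_2''(0)$, the stated formula $CZ_{\Phi_0}(P^k) = 2\lfloor -k\,h_1''(0)/(2\pi h_2''(0)) \rfloor + 1$ drops out.

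The main source of bookkeeping headaches will be the Conley--Zehnder computation: one has to carry through the Taylor expansion of $\alpha$ carefully in Cartesian coordinates, identify the correct linearization of $R$ on the normal bundle to $P$, and then match the resulting rotation with the sign conventions for $CZ$ and for the trivialization $\Phi_0$. Once the sign is settled, both the degeneracy criterion and the index formula follow from the standard calculation for a symplectic path of rotations; everything else in the proposition is a direct consequence of explicit solution of the Reeb ODE on $L_{r_0}$.
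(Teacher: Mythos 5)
The paper itself states this proposition without proof, importing it from Wendl's work (the cited thesis and paper), and your direct computation---solving for the Reeb field via the linear system coming from $\alpha(R)=1$, $\iota_R d\alpha=0$, reading off the linear flow on each torus $L_{r_0}$ to get the closing condition and minimal period, identifying the Morse--Bott condition with the non-vanishing derivative of the rotation number $h_1'/h_2'$, and linearizing along the core in Cartesian coordinates to obtain the constant-speed rotation that yields the degeneracy criterion and the Conley--Zehnder formula---is essentially the standard argument given in that source. Your proposal is correct; the only step needing the care you already flag is the orientation/sign bookkeeping in the $CZ$ computation (the sign of $h_1(0)$ and the $d\alpha$-orientation of the frame $(\partial_x,\partial_y)$ must be tracked together so that the extra $\operatorname{sign}(h_1(0))$ in the rotation angle is absorbed by the trivialization $\Phi_0$), which indeed works out to the stated formula.
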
 ~\\

Observe that a formula for the action of the orbits in terms of the coordinate $r$ is given by either $\mathcal{A}(r)=2\pi p \frac{D(r)}{h_1'(r)}$ or $\mathcal{A}(r)=q\frac{D(r)}{h_2'(r)}$ depending on where the derivative of either $h_1(r)$ or $h_2(r)$ vanishes. A quick analysis checking the signs of the derivative of $\mathcal{A}(r)$ with respect to $r$ and the possible values for $p,q$ yields two minima for the action precisely when $r$ is such that $h_1(r)=0$. Figure \ref{fig:signs} may help the reader with this argument. According to figure \ref{fig:Lutz0} there are two such values for $r$, which we denote by $r_+$ and $r_+'$. The first of the two will be used to control the $l$-invariant while for the second one, when we pick the specific $h_2(r)$, we will require that $h_2(r_+)<h_2(r_+')$ so as for the orbits corresponding to $r_+$ to have the least action among all orbits. This is explained in claim \ref{claction}. ~\\

\begin{figure}[h!]
    \centering
    \includegraphics[scale=1]{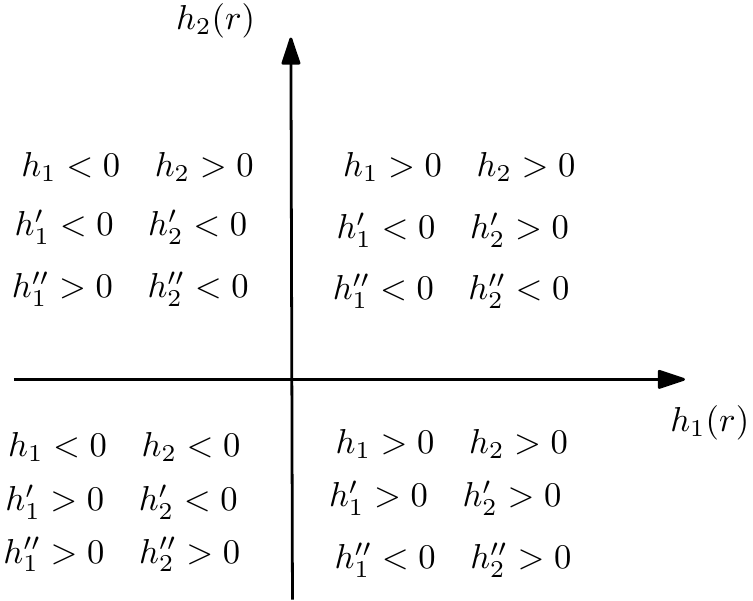}
    \caption{Analysis for the signs of $\mathcal{A}'(r)$}
    \label{fig:signs}
\end{figure}

Let us know proceed with understanding the foliation. Let $r_-,r_+ \in (0,\varepsilon)$. In his thesis, \cite{Wend} section 3.1, Wendl proves (by solving a system of ODEs using a reasonable geometric ansatz) that the region in the symplectization 
$$\{(a,\theta,r,\phi)\in \mathbb{R}\times X \mid r\in(r_-,r_+)\subset(0,\varepsilon)\}$$ is foliated by finite energy $\Tilde{J}$-holomorphic cylinders asymptotic to $\{r=r_-\}$ and $\{r=r_+\}$. \\ 

Next, also within section 3.1, he studies the case when $r_- \rightarrow 0$. There are two cases to consider, namely $q=0$ and $q\neq 0$. In this work we are interested only in the setting where our function $h_2(r)$ so that $h_2'(r_+)=0$, i.e. $q=0$. This ensures that the positive ends of the pseudoholomorphic planes foliating the solid tube do not wind around the $\theta$ direction.\\

Then, using Gromov's removable singularity theorem, he shows that the previous foliation can be extended smoothly to a finite energy foliation of the region $\{r<r_+\}$ by $\Tilde{J}$-holomorphic disks each positively asymptotic to some simply covered Reeb orbit on the torus $\{r=r_+\}$ and transverse to the core P. As explained previously, these simply covered Reeb orbits have period $T=2\pi\cdot h_2(r_+)$. These Reeb orbits, which form a 2-torus, can be arranged, by suitable choices to be explained, to be the lowest action orbits bounding a holomorphic plane. As stated before, the proof of claim \ref{claction} provides more insight. Hence, we end up having at this point that $2\pi\cdot h_2(r_+)$ is the $l$-invariant. Strictly speaking, we need a perturbation so as to have degenerate orbits, yet the action is not far from $2\pi\cdot h_2(r_+)$ as will be explained below.\\ \par

The upshot is that modification of $h_2(r)$ provides the second degree of freedom in the construction of the 2-parameter family. This is because, for any choice of $h_2(r)$ providing a Lutz twist, Wendl's construction is feasible, thus we get holomorphic planes bounded by orbits of certain chosen action. The key idea is that this modification is Lipschitz. This will be proved in subsection \ref{pfofqiemb} where we will get bounds using Gray's stability theorem. Although not relevant to our work, for the reader interested in contact structures, we also remark that the pseudoholomorphic  disk is \say{smaller} than the overtwisted disk as the overtwisted disk has its Legendrian boundary at the specific $r$ between $r_+$ and $r_+'$ for which $h_2(r)=0$.  \\ \par

What was not mentioned explicitly above, is that actually the asymptotes of the pseudoholomorphic curves forming the stable finite energy foliation are foliating in turn the tori $\{r=r_+\}$ and $\{r=r_-\}$ (recall that in our case $r_-=0$ and $h_2'(r_+)=0$ so there is only one interesting torus corresponding to $r_+$). This turns out to be helpful in the following manner. If we need to set up the contact homology chain complex we have to perturb our form so that all Reeb orbits are isolated. This can be done as in \cite{Wend} following \cite{Bourgeois}. We have to be extra careful about the orbits bounding pseudoholomorphic planes. Our perturbation will be performed in two steps.\\ \par

The first which agrees with \cite{Wend} section 3.3 guarantees that the resulting holomorphic planes generically are positively asymptotic to the elliptic and hyperbolic orbits created after perturbation of the form in a small neighbourhood of the torus $\{r=r_+\}$. Recall that our goal is of course to exhibit existence and uniqueness of the pseudoholomorphic plane bounded by an orbit of least action $l$ and that means that the contact homology class of the identity vanishes precisely at action level $l$.\\ \par

The perturbation is performed as follows. We first choose a smooth cut off function $b(r)$ supported in a neighbourhood of the torus $L_{+}=\{r=r_+\}$ and is equal to 1 near $L_+$. Moreover, choose a small number \hypertarget{delta}{$\delta$} and a Morse function $\mu:S^1 \rightarrow \mathbb{R}$ with two critical points at $\theta=\theta_+$ and $\theta=\theta_-$. The perturbed contact form is then 

\begin{gather*}
    \alpha^\delta:= \begin{cases}
(1+\delta b(r)\mu(\theta))\alpha,& r\in supp(b)\\
\alpha,& otherwise
\end{cases}
\end{gather*}

Then one can see that the corresponding Reeb vector field in the solid tube in coordinates $(\theta,r,\phi)$ is given by $$R_\delta=\frac{(h_2'+\delta \mu (b'h_2+bh_2'),-\delta \mu' b h_2,h_1'+\delta \mu (b'h_1+bh_1'))}{D(1+\delta b\mu)^2}$$ has two orbits at $r=r_+$ corresponding to the critical points of $\mu$ (since $h_2'(r_+)=0$, $b'(r_+)=0$ and $b(r_+)=1$). One of them is the elliptic $O_e$ and the other one is the hyperbolic $O_h$.\\ \par 

What has been achieved so far is that the orbits on the positive torus $L_{r_+}$ are non-degenerate, thus we have two non-degenerate, isolated ones $O_h,O_e$. Moreover, those are simply covered so they are not bad. Since $\delta$ can be chosen arbitrarily small, their action is arbitrarily close to the action of the orbits of the original Morse-Bott torus. The foliation consists of a family of pseudoholomorphic planes. One rigid plane positively asymptotic to $O_h$ and a family of planes parametrized by the open interval $(0,1)$ positively asymptotic to $O_e$. For more details the interested reader can consult \cite{Wend}, section 3.3. \\ \par

The next step in the perturbation process is the one described by Bourgeois in \cite{Bourgeois}. This allows all the remaining orbits (not lying on $L_+$ which is already perturbed) to become non-degenerate, thus isolated so as to be able to set up the CH complex. The initial worry is whether this perturbation will affect the foliation and possibly make the rigid plane we are interested in disappear. Thanks to \cite{Wend} section 4.5, the necessary Fredholm analysis shows that for sufficiently small deformation parameter $\delta'$, the foliation is stable under deformations of the form $\alpha^\delta$ and of the almost complex structure $J$. ~\\

 The perturbed contact form is $\alpha^{\delta,\delta'}:=(1+\delta' f_T)\alpha^\delta$, where $f_T$ a smooth Morse function with support a small neighbourhood of the set consisting of points on non-isolated orbits of action $\leq T$. The new Reeb vector field is given by $$R_{\alpha^{\delta,\delta'}}=R_{\alpha^\delta}+X$$
 where $X$ is the vector field with the properties
 $$i(X)d\alpha^\delta=\delta' \frac{df_T}{(1+\delta' f_T)^2}$$
 $$\alpha^\delta (X)=-\delta' \frac{f_T}{1+\delta' f_T}$$ \par

Due to the fact that $\delta'$ is chosen sufficiently small, this new perturbation will create no new orbits below a sufficiently large action threshold $\mathcal{A}_0$. \\ \par

So, we are thus now able to set up the contact homology chain complex. The rest of this section will be a discussion on how to control the $l$-invariant. We remark that after the perturbations above, we get $\mathcal{A}(O_h)=2\pi h_2(r_+)(1+\delta\mu(\theta_-))$. This will be the precise $l$-invariant.

\begin{Claim}\label{cldegree}
$O_h$ has degree 1.
\end{Claim}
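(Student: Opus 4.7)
My plan is to reduce the claim to a Conley--Zehnder index computation and then carry out that computation by linearizing the perturbed Reeb flow along $O_h$. Since $Y$ has dimension $2n-1 = 3$, the grading formula $|\gamma| = CZ_T(\gamma)+n-3$ recalled in Section \ref{Contact homology} specializes to $|\gamma| = CZ(\gamma)-1$, so it suffices to show $CZ_{\Phi_0}(O_h)=2$ with respect to the natural symplectic trivialization $\Phi_0$ of $\xi$ along $O_h$ given by the coordinate frame $\{v_1,v_2\} = \{\partial_r,\,\tfrac{1}{D(r)}(-h_2\partial_\theta+h_1\partial_\phi)\}$.

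To set up the linearization, I would observe that at the critical point $O_h = \{\theta=\theta_-,\,r=r_+\}$ the conditions $\mu'(\theta_-)=0$, $b(r_+)=1$, $b'(r_+)=0$, and $h_2'(r_+)=0$ force $R_{\alpha^\delta}$ to be a multiple of $\partial_\phi$. Expanding $R_{\alpha^\delta}$ to first order in the transverse coordinates $r-r_+$ and $\theta-\theta_-$ then produces a linear autonomous ODE on $\xi$ whose generator $A\in\mathfrak{sp}(2,\mathbb{R})$ has off-diagonal entries controlled by $h_2''(r_+)$ (the Morse--Bott shear inherited from the unperturbed torus) and by $\delta\,h_2(r_+)\,\mu''(\theta_-)$ (the Morse perturbation along the family).

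Next I need to pin down signs. With the Lutz-twist profile of Figure \ref{fig:Lutz0} we have $h_2''(r_+)<0$, $h_2(r_+)>0$, and $D(r_+)>0$; moreover, since $\mathcal{A}(O_h) = 2\pi h_2(r_+)(1+\delta\mu(\theta_-))$ is arranged to be the smaller of the two perturbed actions, $\theta_-$ must be a local minimum of $\mu$, giving $\mu''(\theta_-)>0$. The resulting sign of $\det A$ is positive, so $A$ has real eigenvalues of opposite sign and $O_h$ is positive hyperbolic. The Conley--Zehnder index can then be read off in either of two equivalent ways: by integrating the symplectic path $\Phi(t) = \exp(tA)$ for $t\in[0,T]$ and applying the standard CZ recipe for paths in $\mathrm{Sp}(2,\mathbb{R})$, or by invoking a Morse--Bott perturbation formula of Bourgeois,
$$CZ(\gamma_p) \,=\, \mu_{RS}(\gamma_{MB})+\mathrm{ind}_p(\mu)-\tfrac{1}{2}\dim N,$$
with $\dim N = 1$ (the circle of Morse--Bott orbits on $L_+$) and $\mathrm{ind}_{\theta_-}(\mu)=0$, and then computing $\mu_{RS}(\gamma_{MB})$ from the shear linearization together with the winding of the frame $\{v_1,v_2\}$ along the orbit with respect to $\Phi_0$. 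Both routes yield $CZ_{\Phi_0}(O_h) = 2$.

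The hard part will be the trivialization and sign bookkeeping: one must verify that, with the chosen profile functions $h_1,h_2$, the chosen Morse function $\mu$, and the conventions used for the Conley--Zehnder and Robbin--Salamon indices, the numerical answer lands on the even integer $2$ rather than $0$ or $-2$. This is essentially the computation carried out in Wendl's thesis \cite{Wend}, Section 3.3, and the construction above is arranged precisely so that his argument transfers directly to our setting.
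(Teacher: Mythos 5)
Your overall strategy (reduce Claim \ref{cldegree} to a Conley--Zehnder computation via Bourgeois' Morse--Bott perturbation formula and a shear linearization) is the right one, but there is a genuine gap in the trivialization bookkeeping, and it changes the answer. The formula $|\gamma|=CZ_T(\gamma)+n-3$ gives a well-defined absolute degree for a null-homologous orbit only when the trivialization $T$ extends over a spanning surface; otherwise one must add the correction $\langle 2c_1(\xi,\tau),A\rangle$ recalled in Section \ref{Contact homology}. The coordinate frame $\{v_1,v_2\}$, with $v_2\propto -h_2(r)\partial_\theta+h_1(r)\partial_\phi$, does \emph{not} extend over the disk that $O_h$ bounds: extended over a null-homology $A$ it vanishes exactly once, positively, at the center, so $\langle 2c_1(\xi,\tau),A\rangle=2$. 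In that coordinate frame the linearized flow is precisely the shear you describe, and the Morse--Bott formula with $\mu(S_{T'})=\tfrac12$, $\dim S_{T'}=1$, $\mathrm{index}_{\theta_-}=0$ gives $CZ_{\Phi_0}(O_h)=0$, not $2$; a pure shear path has Robbin--Salamon index $\pm\tfrac12$ and cannot yield $CZ=2$ without a full winding of the frame, which is exactly the Chern-class contribution you have dropped (your own remark that the relative winding of $\{v_1,v_2\}$ with respect to $\Phi_0$ enters is vacuous, since you chose $\Phi_0$ to \emph{be} that frame). So running your plan as stated --- compute $CZ$ in the coordinate frame and plug into $|\gamma|=CZ-1$ --- lands at degree $-1$, not $1$.

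The paper's proof supplies exactly the missing piece: it computes $CZ_\tau(O_h)=0$ in the coordinate trivialization via the symplectic shear axiom (your linearization, with the correct outcome), and then separately computes the relative Chern term by extending the section $h_2(r)\partial_\theta-h_1(r)\partial_\phi$ over $A$ and counting its single positive zero, so that $|O_h|=0+(n-3)+2=1$ with $n=2$. To repair your argument, either pass to a trivialization of $\xi$ along $O_h$ that extends over the spanning disk (in which $CZ=2$ does hold and your formula $|\gamma|=CZ-1$ applies), or keep the coordinate frame and add the $2\langle c_1(\xi),A\rangle$ correction explicitly; either way the degree is $1$. Your remaining sign analysis (hyperbolicity of $O_h$, the index-$0$ critical point $\theta_-$ of $\mu$) is consistent with the paper.
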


\begin{proof}
The grading of any null-homologous orbit $\gamma$ in contact homology algebra is given by the formula $$|\gamma|=CZ_\tau(\gamma)+n-3+ \langle 2c_1(\xi,\tau),A\rangle \in \mathbb{Z}/c_1(\xi) \cdot H_2(Y)$$
for any trivialization $\tau$ and any null-homology A of $\gamma$. In our case, $n=2$ and as we show $CZ_\tau(O_h)=0$ and $\langle 2c_1(\xi,\tau),A\rangle =2$. \par
Let's start with the calculation of $CZ_\tau(O_h)$. Since initially our orbits are degenerate, the formula in order to calculate Conley-Zehnder indices of simply covered orbits is, according to lemma 2.4 in \cite{Bourgeois},
$$CZ(\gamma^p_{T'})=\mu(S_{T'})-\frac{1}{2}dim(S_{T'})+index_p(f_{T'})$$
where to recall things, $\phi_T$ is the Reeb flow for time $T$, $N_T=\{p\in Y\mid \phi_T(p)=p \}$ and $S_T$ the quotient of $N_T$ under the Reeb flow. $\mu(S_{T'})$ is the generalized Conley-Zehnder (see \cite{2013arXiv1307.7239G}). $T'$ is an action level less or equal to $T$. Recall that perturbation is made by fixing an action level $T$ and then the result of this perturbation process is that all orbits of action $\leq T$ become non-degenerate. In particular, here $N_{T'}$ is the torus of radius $r=r_+$ foliated by degenerate horizontal orbits, $S_{T'}=S^1$, $f_{T'}$ a Morse function on $S_{T'}$ and $p$ a critical point of $f_{T'}$ corresponding to some degenerate orbit. \par
We have $dim(S_{T'})=1$, $index_p(f_{T'}) \in \{0,1\}$ and as we now show $\mu(S_{T'})=\frac{1}{2}$. Thus, the elliptic orbit corresponding to the critical point of index 1 will have $CZ(O_e)=1$ and the hyperbolic orbit $CZ(O_h)=0$. \par
In coordinates $(\theta,r,\phi)$, the flow in general is given by
$$\phi_t(\theta,r,\phi)=(\theta+\frac{h_2'(r)}{D(r)} t,r,\phi-\frac{h_1'(r)}{D(r)} t)$$
So we get,
$$d\phi_t=\begin{pmatrix} 
1 & \Big(\frac{h_2'(r)}{D(r)}\Big)' t & 0 \\
0 & 1 & 0 \\
0 & \Big(\frac{-h_1'(r)}{D(r)}\Big)' t & 1
\end{pmatrix}$$
We will perform a symplectic change of basis so as to calculate the index using some helpful axiom. It is also helpful to work in this basis for the relative Chern class term. The new basis for the linearized flow will be $\langle \partial_\theta,\partial_r,-h_2(r)\partial_\theta+h_1(r)\partial_\phi \rangle$.

So now the matrix is given by
$$d\phi_t=\begin{pmatrix} 
 1 & 0 & 0 \\
 0 & 1 & 0 \\
 0 & f(r) t & 1
 
\end{pmatrix}
$$

where $f(r)=-\frac{h_1''(r)h_2'(r)-h_1'(r)h_2''(r)}{D(r)^2}$. We restrict the linearized flow on $\xi$ and we look at $t=1$. This restriction looks like

$$d\phi_t=\begin{pmatrix} 
  1 & 0 \\
  f(r)t & 1
 
\end{pmatrix}
$$

We aim to use the symplectic shear axiom. In our case, $h_2'(r_+)=0$ hence $sgn(f)>0$. Making the obvious last symplectic change of basis, the matrix for $d\phi_t$ when restricted to $\xi$ looks like 
$$
\begin{pmatrix}
1 & -f(r)t\\
0 & 1
\end{pmatrix}
$$
with $sgn(-f)<0$. Thus, by the symplectic shear axiom $\mu(d\phi_1)=-\frac{sgn(-f)}{2}=\frac{1}{2}$. Hence, as expected $\mu(S_{T'})=\frac{1}{2}$.\\
Let's now focus on $\langle 2c_1(\xi,\tau),A\rangle=2$. We have to pick a section of $\xi$, constant along $O_h$ with respect to our given trivialization for $\xi$. This is $\langle \partial_r,h_2(r)\partial_\theta-h_1(r)\partial_\phi \rangle$. We extend it over A and count its zeroes. We choose the second basis vector here. This only vanishes at the origin of the disk $A$ positively once. Thus, as it is expected $\langle 2c_1(\xi,\tau),A\rangle=2$.
\end{proof}

This means that $O_h$ has degree one more than the empty word.

\begin{Claim} \label{claction}
Any other orbit bounding a holomorphic plane must have action more than that of $O_h$.
\end{Claim}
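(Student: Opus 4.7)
My plan is to enumerate, modulo the two perturbations that produced $\alpha^{\delta,\delta'}$, all candidate Reeb orbits, bound their actions from below, and verify that every orbit other than $O_h$ that could bound a holomorphic plane has action strictly greater than $\mathcal{A}(O_h) = 2\pi h_2(r_+)(1+\delta\mu(\theta_-))$.

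First I would partition the orbits according to whether they lie inside or outside the Lutz tube $S^1 \times \mathrm{int}(D^2)$. Outside the tube, the form agrees with the pre-twist contact form up to a $\delta'$-small Bourgeois perturbation supported in small neighbourhoods of non-isolated orbits, so every orbit there has action bounded below by $A$ up to an error that is $o(\delta')$, where $A$ is the minimal action of a Reeb orbit of the pre-twist form on $Y \setminus (S^1 \times D^2)$. Using the freedom in choosing $h_2(r_+)$ (the parameter denoted $B$ in the list preceding Subsection \ref{Lutztwist}), I would force $2\pi h_2(r_+) < A$, which together with the smallness of $\delta$ and $\delta'$ handles every orbit outside the tube.

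For orbits inside the tube I would apply Proposition \ref{dynamics}. The core $P$ and its $k$-fold covers $P^k$ have action $k|h_1(0)|$, which I will arrange to exceed $\mathcal{A}(O_h)$ by choosing the twist profile so that $2\pi h_2(r_+) < |h_1(0)|$. The remaining candidates come from Morse--Bott tori $L_r$ with $r \in (0,\varepsilon)$; their action is the function $\mathcal{A}(r)$ described after Proposition \ref{dynamics}. The sign analysis summarised in Figure \ref{fig:signs} shows that $\mathcal{A}$ has exactly two local minima on $(0,\varepsilon)$, namely at the zeros $r_+$ and $r_+'$ of $h_1$. By building the twist with $h_2(r_+) < h_2(r_+')$, I make $r_+$ the unique global minimum, so that any orbit living on $L_r$ with $r \ne r_+$ has action strictly greater than $2\pi h_2(r_+)$. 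After the first perturbation the torus $L_{r_+}$ collapses to the two simple orbits $O_e, O_h$ with actions $2\pi h_2(r_+)(1+\delta\mu(\theta_\pm))$ differing by the critical values of $\mu$; the choice of $\theta_-$ as the minimum of $\mu$ yields $\mathcal{A}(O_h) < \mathcal{A}(O_e)$. The subsequent Bourgeois perturbation is $\delta'$-small and supported away from $L_{r_+}$, so orbits created in a neighbourhood of a torus $L_r$ with $r$ bounded away from $r_+$ retain action close to $\mathcal{A}(r)$ and the strict inequality $\mathcal{A}(\gamma) > \mathcal{A}(O_h)$ is preserved.

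The obstacle I anticipate is not conceptual but a quantitative bookkeeping one: the inequalities need to be compatible, with $2\pi h_2(r_+) < \min\{A, |h_1(0)|\}$, and with $\delta, \delta'$ chosen small enough in the correct order that neither perturbation disturbs the ordering of actions dictated by the Morse--Bott model. This is arranged by first fixing the twist profile $(h_1,h_2)$ as in Subsection \ref{Lutztwist}, then shrinking $\delta$, and finally $\delta'$ below the action threshold $\mathcal{A}_0$ used to set up the contact homology complex. Finally, restricting attention to orbits that bound a holomorphic plane (rather than all Reeb orbits) only shrinks the pool of candidates, so the bounds obtained above apply a fortiori and establish the claim.
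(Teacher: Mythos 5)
Your proposal is correct and follows essentially the same route as the paper: the paper's proof likewise makes $O_h$ the global action minimizer by imposing $2\pi h_2(r_+)<A$ (with $A$ the relevant minimal pre-twist action) together with $|h_2(r_+)(1+\delta\mu(\theta_-))|<|h_2(r_+')|$, relying on the sign analysis of $\mathcal{A}(r)$ and the smallness of $\delta,\delta'$; your version just spells out the bookkeeping (core orbit $P^k$, ordering of parameter choices) more explicitly. One tiny slip: the quantity $2\pi h_2(r_+)$ is not the parameter $B$ of the paper ($B$ is the action of the orbit family near the compensating knot $K_1$), but this mislabel does not affect the argument.
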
 

As mentioned earlier, this depends on the Lutz twist modification parameters and especially on the function $h_2(r)$ which can be chosen accordingly in order to ensure this. We now provide a rigorous proof of this statement.
 
\begin{proof}
Let $A$ be the action of the lowest action orbit bounding a unique pseudoholomorphic plane before the Lutz twist. If there is no such orbit we let $A=+\infty$. We need to choose $h_1(r),h_2(r)$ satisfying the conditions described in section \ref{conditions} in order to be able to perform the Lutz twist. We recall that $r_+<r_+'$ are the two values of $r$ for which the function $h_1(r)$ vanishes. There, the tori $L_{r_+}$ and $L_{r_+'}$ are foliated by horizontal Reeb orbits and the minima for the action occur. We have to perturb the Morse-Bott torus $L_{r_+}=\{r=r_+\}$ in order to get an isolated orbit of least action $O_h$ which bounds a unique pseudoholomorphic plane. Recall that then $\mathcal{A}(O_h)=2\pi h_2(r_+)(1+\delta\mu(\theta_-))$. \par
We have to impose some additional requirements in order to know what the $l$-invariant after the Lutz twist is. The first one is $2\pi h_2(r_+)<A$ or equivalently $h_2(r_+)<\frac{A}{2\pi}$ and the second one is $|h_2(r_+)(1+\delta\mu(\theta_-)| < |h_2(r_+')|$. The combination of both guarantees first that the hyperbolic orbit at $r=r_+$ is the one with the least possible action among all Reeb orbits of the contact manifold.
\end{proof}

 We denote the empty word by $1$. In order to obtain the needed count and thus get the coefficient $\langle \bd O_h,1 \rangle$ (which we need to it be equal to 1), one needs to ensure transversality at the holomorphic plane $u_0$, or in other words to ensure that the linearized Cauchy-Riemann operator at $u_0$ is surjective. Of course, if one wants to use techniques from \cite{MR3981989} and thicken the moduli spaces in order to obtain a proper count of curves, i.e. coefficients for the differential and thus a well defined contact homology algebra, they have to make sure that this thickening process does not alter the coefficient $\langle \bd O_h,1 \rangle$, which geometrically at least was calculated to be equal to 1. \\ \par 
 
 Problems that arise when compatifying the moduli space are multiply covered curves or breaking along Reeb orbits. Intuitively, since $O_h$ is embedded, we do not have to worry about multiple covers and since $O_h$ has the lowest action, no breaking of the pseudoholomorphic plane into buildings can occur as it would have to break along an orbit of lower action than that of $O_h$ and there are no such orbits. This intuition is backed up by part $(iv)$ of Theorem 1.1 in \cite{MR3981989}. In words, it states that when the moduli space is of dimension 0 and regular, then the algebraic count we obtain by thickening agrees with the geometric count we already have. Concretely, in our case, the geometric count is 1 and the formal algebraic count we get after setting up $CH(Y,\lambda)$ is also 1. Thus, the coefficient $\langle \bd O_h,1 \rangle$ is indeed 1 as needed.\\ \par
 
 As mentioned above, $u_0$ is a leaf of a stable finite energy foliation so transversality holds and moreover all neighboring
finite energy surfaces obtained by the implicit function theorem are also leaves
of the foliation. For more on this, the interested reader should consult \cite{Wend}, section 4.5.\par 
\begin{Claim}\label{clposend}
$O_h$ is not the positive end of any pseudoholomorphic cylinder.
\end{Claim}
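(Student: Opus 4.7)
The plan is to argue by contradiction using the fact that $O_h$ was specifically engineered to be the orbit of minimal action, combined with the standard action/energy inequality for pseudoholomorphic curves in symplectizations. Suppose a $\widetilde J$-holomorphic cylinder $u:\R\times S^1 \to SY$ exists with $O_h$ as its positive asymptote and some Reeb orbit $\gamma$ as its negative asymptote.

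Applying Stokes' theorem to $d(e^t\alpha^{\delta,\delta'})$ over the image of $u$ (or, equivalently, noting that the Hofer energy $E(u)\ge 0$ and is zero only on trivial cylinders), one obtains the standard inequality
\[
\mathcal{A}(O_h) \;\ge\; \mathcal{A}(\gamma),
\]
with equality precisely when $u$ is a trivial cylinder over $O_h$. Since the contact homology differential quotients by the $\R$-action, trivial cylinders contribute nothing; so we may assume $u$ is non-trivial and the inequality is strict, giving $\mathcal{A}(\gamma)<\mathcal{A}(O_h)$.

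Next I would invoke the construction of the perturbed form to conclude that no orbit of smaller action than $O_h$ exists. By Proposition \ref{dynamics} and the choice of $h_1,h_2$ imposed in the proof of Claim \ref{claction}, the unperturbed action function $\mathcal{A}(r)$ on the Lutz tube attains its two minima on the tori $L_{r_+}$ and $L_{r_+'}$, with $2\pi h_2(r_+) < 2\pi|h_2(r_+')|$ and $2\pi h_2(r_+) < A$, where $A$ is the minimal action outside the tube. After the Morse–Bott perturbation, the two isolated orbits produced on $L_{r_+}$ are $O_e$ and $O_h$ with action $2\pi h_2(r_+)(1+\delta\mu(\theta_\pm))$, and $O_h$ is the one with the smaller value. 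Because $\delta$ and $\delta'$ are chosen arbitrarily small, all remaining perturbed orbits below the threshold $\mathcal{A}_0$ have action arbitrarily close to their unperturbed values, so $\mathcal{A}(O_h)$ remains strictly smaller than the action of every other Reeb orbit of $\alpha^{\delta,\delta'}$. Hence the existence of $\gamma$ with $\mathcal{A}(\gamma)<\mathcal{A}(O_h)$ is impossible, giving the desired contradiction.

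The only step I expect to require genuine care is the verification that the two-stage perturbation does not produce any spurious low-action orbit beneath $\mathcal{A}(O_h)$; this follows from the discussion in subsection \ref{Wendlwork} (namely that the Bourgeois-style second perturbation with parameter $\delta'$ creates no new orbits below $\mathcal{A}_0$ for $\delta'$ small), but it is worth stating explicitly in the write-up so that the minimality of $\mathcal{A}(O_h)$ is unambiguous. Once that is in hand, the argument reduces to the one-line action inequality above.
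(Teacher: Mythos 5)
Your argument is correct and is essentially the paper's own proof, which simply notes that the differential (equivalently, any non-trivial pseudoholomorphic cylinder) decreases action while $O_h$ is constructed to have the least action among all Reeb orbits; you merely spell out the Stokes/energy inequality and the check that the $\delta,\delta'$ perturbations create no lower-action orbits, both of which are already implicit in subsection \ref{Wendlwork} and Claim \ref{claction}.
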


\begin{proof}
The differential decreases action and $O_h$ is designed to be the orbit of least action among all Reeb orbits.
\end{proof}
The fact that this is the unique holomorphic plane bounded by $O_h$ comes from an argument regarding positivity of intersections of pseudoholomorphic curves in 4 dimensions as presented by Bourgeois and Van Koert in \cite{MR2646902} adapted to our case.
\begin{Claim}\label{clunique}
$u_0$ is the unique holomorphic plane bounded by $O_h$, so $\partial O_h=1$.
\end{Claim}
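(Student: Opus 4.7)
The plan is to argue by contradiction using positivity of intersections for pseudoholomorphic curves in the $4$-dimensional symplectization $\mathbb{R}\times Y$, together with the finite energy foliation structure established in Subsection \ref{Wendlwork} following \cite{Wend}. Suppose there existed a finite energy $\widetilde{J}$-holomorphic plane $u_1$ positively asymptotic to $O_h$ that is not an $\mathbb{R}$-translate of $u_0$. I would then extract a contradiction by comparing $u_1$ with the family of leaves of the stable finite energy foliation obtained from $u_0$ and its $\mathbb{R}$-translates.

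First I would analyze the asymptotic behavior of $u_1$ at its positive end. Because $O_h$ is non-degenerate hyperbolic after the perturbation $\alpha^\delta$, standard asymptotic analysis (Hofer--Wysocki--Zehnder) says that $u_1$ approaches $O_h$ along an eigenfunction of the asymptotic operator whose asymptotic winding number with respect to the trivialization $\tau$ from Claim \ref{cldegree} is bounded in terms of the Conley--Zehnder index $CZ_\tau(O_h)=0$. The same bound applies to $u_0$, so $u_0$ and $u_1$ share the same asymptotic winding at $O_h$.

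Next I would compute the Siefring generalized intersection number $u_0 * u_1$, decomposing it into an algebraic interior intersection (which is nonnegative by Micallef--White positivity of intersections) plus the asymptotic intersection contribution at the common asymptote $O_h$. Since the $\mathbb{R}$-translates of $u_0$ are pairwise disjoint leaves of a stable finite energy foliation, one has $u_0 * (u_0 \cdot c)=0$ for every $c\in\mathbb{R}$, where $u_0\cdot c$ denotes the translate; using the winding number equality from the previous step, this pins down the asymptotic contribution precisely and forces $u_0 * u_1 = 0$. By Siefring's criterion, $u_0*u_1=0$ together with the matching asymptotic data implies that $u_1$ is disjoint from every $\mathbb{R}$-translate of $u_0$, and hence is itself a leaf of the foliation. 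But the foliation contains a unique leaf asymptotic to $O_h$ (all the other planes in the family are asymptotic to the elliptic orbit $O_e$, cf.\ Subsection \ref{Wendlwork}), so $u_1$ must coincide with $u_0$ up to an $\mathbb{R}$-translation, contradiction. Combined with Claim \ref{clposend}, which rules out cylindrical breakings, this shows that the only rigid curve with positive puncture $O_h$ and no negative punctures is $u_0$, so using the transversality already established for $u_0$ and part $(iv)$ of Theorem 1.1 of \cite{MR3981989} to match the algebraic and geometric counts, we conclude $\partial O_h=1$.

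The main obstacle is the bookkeeping in the Siefring intersection formula: one must verify that the winding bounds at $O_h$, together with the relative first Chern number $\langle c_1(\xi,\tau),A\rangle=1$ computed in Claim \ref{cldegree}, conspire to make the asymptotic correction term vanish exactly. This is essentially the pseudoholomorphic curve analysis carried out by Bourgeois and Van Koert in \cite{MR2646902} in the negatively stabilized open book setting, and the present setup is modelled so that their computation transports directly: the explicit coordinate trivialization $\langle \partial_r, h_2(r)\partial_\theta-h_1(r)\partial_\phi\rangle$ of $\xi$ used in Claim \ref{cldegree} is precisely the one in which the asymptotic operator at $O_h$ takes the standard form needed for their winding-number computation.
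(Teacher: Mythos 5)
Your overall strategy (positivity of intersections plus the finite energy foliation) is in the right spirit, but two pivotal steps are not justified and at least one fails as written. First, $u_0 * u_1=0$ is never actually established: the Siefring number is a homotopy invariant only within a fixed relative homology class, and nothing in your argument places $u_1$ in the same relative class as $u_0$; moreover, even for the $\mathbb{R}$-translates, disjointness of $u_0$ and $u_0\cdot c$ does not give $u_0 * (u_0\cdot c)=0$, because the generalized intersection number contains asymptotic contributions at the shared asymptote $O_h$ which can be strictly positive for disjoint curves --- recording exactly that hidden intersection is the point of Siefring's definition. Your claim that $u_0$ and $u_1$ \emph{share} the same asymptotic winding is also a non sequitur: the Hofer--Wysocki--Zehnder analysis gives the same upper bound for both windings, not equality. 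Second, and more seriously, the inference ``disjoint from every $\mathbb{R}$-translate of $u_0$, hence itself a leaf of the foliation'' does not follow: being disjoint from the translates of one leaf does not force a curve into the foliation, and since the foliation only fills the region $\{r<r_+\}$ of the Lutz tube, a competing plane asymptotic to $O_h$ could a priori leave that region and meet no leaf at all, in which case your argument is silent about it. Upgrading ``disjoint from the leaves'' to ``is a leaf'' is essentially the hard part of Wendl's uniqueness results and cannot be waved through.

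The paper's proof, adapted from Bourgeois--Van Koert, avoids this machinery by exploiting the exact rotational symmetry of the local model $h_1(r)d\theta+h_2(r)d\phi$. Writing a competing plane as $u=(u_a,u_\theta,u_r,u_\phi)$, it splits into two cases. If $u_\theta$ is constant, a direct computation with the explicit $J$ in the frame $v_1,v_2$ shows that $u$ crosses a regular level $\{r=r_0\}$ with constant $a$ and $\theta$, so a suitable $\mathbb{R}$-translate of $u$ meets $u_0$ along a whole circle, contradicting positivity of intersections unless $u$ is equivalent to $u_0$. If $u_\theta$ is not constant, the $\theta$-translates $u_c$ are again $\Tilde{J}$-holomorphic and asymptotic to orbits $\gamma_c$ with $lk(\gamma_0,\gamma_c)=0$, yet some $u_c$ must intersect $u_0$, which by positivity of intersections forces $lk(\gamma_0,\gamma_c)>0$, a contradiction. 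If you wish to keep a Siefring-theoretic route you would need to (i) control the relative homology class of $u_1$, (ii) compute the asymptotic contributions at the even hyperbolic orbit $O_h$ (with $CZ_\tau(O_h)=0$) honestly rather than by appeal to disjointness, and (iii) replace ``hence a leaf'' by a genuine confinement or intersection argument; your final step invoking Claim \ref{clposend} and part $(iv)$ of Theorem 1.1 of \cite{MR3981989} matches the paper, but only after uniqueness is actually proved.
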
 

\begin{proof}
A general plane in the symplectization looks like
\begin{equation}\label{gen}
u(s,t)=(u_a(s,t),u_\theta(s,t),u_r(s,t),u_\phi(s,t))
\end{equation}
We split the proof into two cases.~\\

\underline{Case 1:} If $u_\theta$ is constant, then $u$ is equivalent to $u_0$. \par
Take $r_0$ a regular value of $u_r(s,t)$ and a circle $\gamma$ in the preimage $u_r^{-1}(r_0)$. Consider $\partial_{\tilde{\psi}}, \tilde{\psi} \in [0,2\pi)$ the tangent vector to $\gamma$ on the Riemann surface $\Sigma$. We note that $\partial_{\tilde{\psi}}u$ has components only in the $\phi$ direction. We prove that in the end of the proof for case 1. Now, $u$ passes through $\{r=r_0\}$ with constant $a$ and $\theta$ directions, hence translating $u$ in the $a$ direction, $u$ turns out to intersect $u_0$ in at least the circle at $\{r=r_0\}$. This yields a contradiction to positivity of intersections and thus any other such $u$ is equivalent to $u_0$.\\
Let us now see why $\partial_{\tilde{\psi}}u$ has components only in the $\phi$ direction. We decompose the tangent space of the symplectization as $\langle v_1,v_2,\partial_a,R_\alpha \rangle$ where $v_1,v_2$ the trivialization of the contact structure as defined in the beginning of the section. We write

\begin{equation*}
\begin{split}
\partial_{\tilde{\psi}}u & =A(s,t)v_1+B(s,t)v_2+\Gamma(s,t)\partial_a+\Delta(s,t)R_\alpha \\
& =A(s,t)\partial_r+\frac{-B(s,t)h_2(r)+\Delta(s,t)h_2'(r)}{D(r)}\partial_\theta+\Gamma(s,t)\partial_a+\frac{B(s,t)h_1(r)-\Delta(s,t)h_1'(r)}{D(r)}\partial_\phi
\end{split}
\end{equation*}

Since we are restricted at a regular value $r=r_0$ we get $A=0$. Moreover, $u_\theta$ is constant so $-Bh_2+\Delta h_2'=0$. Now, applying $J$ to $\partial_{\tilde{\psi}}u$ we get

\begin{equation*}
\begin{split}
J\partial_{\tilde{\psi}}u & =A(s,t)\beta(r)v_2-\frac{B(s,t)}{\beta(r)}v_1+\Gamma(s,t)R_\alpha-\Delta(s,t)\partial_a \\
& =A(s,t)\beta(r)(\frac{-h_2(r)}{D(r)}\partial_\theta+h_1(r)\partial_\phi)+\frac{-B(s,t)}{\beta(r)}\partial_r+\Gamma(s,t)\frac{h_2'(r)\partial_\theta-h_1'(r)\partial_\phi}{D(r)}-\Delta(s,t)\partial_a
\end{split}
\end{equation*}
Since $u$ is $J$-holomorphic, $J$ has to preserve the tangent space and the $\theta$-component is constant. Thus the coefficient of $\partial_\theta$ is equal to 0, i.e.
$$\frac{-A\beta(r)h_2(r)+\Gamma h_2'(r)}{D(r)}=0$$
We already have $A=0$, thus $\Gamma=0$. So the plane $u$ passes through $\{r=r_0\}$ with constant $a$ and $\theta$ coordinates.

\begin{flushleft}
\underline{Case 2:} If $u_\theta$ is not constant, then positivity of intersections for pseudoholomorphic curves is contradicted.
\end{flushleft}
If $u$ is a solution to the Cauchy-Riemann equations of the general form \ref{gen}, then $$u_c(s,t)=(u_a(s,t),u_\theta(s,t)+c,u_r(s,t),u_\phi(s,t))$$
is also one. This is asymptotic to some other orbit $\gamma_c$. Concretely $\gamma_0(t)=(\theta_0,r_0,-\frac{h_1'(r)}{D(r)}t)$ and $\gamma_c(t)=(\theta_0+c,r_0,-\frac{h_1'(r)}{D(r)}t)$. They are obviously not linked to each other so $lk(\gamma_0,\gamma_c)=0$. Calculating the linking number in another way we get a contradiction. Since $u_\theta$ is not constant, there is a constant $c$ s.t. $u_c\cap u_0 \neq \emptyset$. By positivity of intersections, $lk(\gamma_0,\gamma_c)>0$, which yields the desired contradiction.
\end{proof}
~\\
What we showed is that the contact element becomes exact at most at filtration level $\mathcal{A}(O_h)$, thus its class vanishes for action less or equal than $\mathcal{A}(O_h)$. This means that the $l$-invariant is less or equal than action of $O_h$ which is $2\pi h_2(r_+)\cdot (1+\delta\mu(\theta_{-}))$. The fact that this is actually the $l$-invariant comes from the fact that any other orbit bounding a holomorphic plane must have action more than that of $O_h$, i.e. claim \ref{claction}. 
\par

Summarizing what has been achieved, we have shown that using Wendl's construction, we are in the position to know precisely what the $l$-invariant is, i.e. what is the action level for which the identity of contact homology algebra becomes exact or equivalently what is the right endpoint of the bar corresponding to $1$ in the barcode of the persistence module $CH^{\leq t}(Y,\lambda)$. More concisely,

\begin{Prop}\label{linvariant}
The $l$-invariant of the contact form $\alpha^{\delta,\delta'}$ which is an appropriate perturbation of $h_1(r)d\theta+h_2(r)d\phi$ in the Lutz tube is $l(\alpha)=\mathcal{A}(O_h)=2\pi h_2(r_+)\cdot (1+\delta\mu(\theta_{-}))$
\end{Prop}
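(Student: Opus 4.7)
The plan is to combine the dynamical and moduli-theoretic analysis already carried out in Claims \ref{cldegree}--\ref{clunique} with a filtered-action bookkeeping argument to obtain two-sided bounds $l(\alpha^{\delta,\delta'}) \leq \mathcal{A}(O_h)$ and $l(\alpha^{\delta,\delta'}) \geq \mathcal{A}(O_h)$, and then to compute $\mathcal{A}(O_h)$ directly. For the action computation, note that $b(r_+)=1$ so along the torus $L_{r_+}$ the perturbed form is $\alpha^\delta=(1+\delta\mu(\theta))\alpha$; the perturbed orbit $O_h$ sits at $\theta=\theta_-$, and since $h_2'(r_+)=0$ forces the Reeb direction to be purely $\partial_\phi$ there (Proposition \ref{dynamics} with $q=0$), direct integration gives
\[
\mathcal{A}(O_h)=(1+\delta\mu(\theta_-))\int_{O_h}\alpha=2\pi h_2(r_+)(1+\delta\mu(\theta_-)),
\]
with the subsequent Bourgeois perturbation by $\delta'$ supported away from $O_h$ so that its action is preserved.

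For the upper bound I would assemble Claims \ref{cldegree}--\ref{clunique} into the identity $\partial O_h = 1$. Claim \ref{cldegree} gives $|O_h|=1$, so $\partial O_h$ has degree zero and is a $\mathbb{Q}$-linear combination of monomials in orbits of total degree zero. The action-filtration property of $\partial$ forces every monomial $\gamma_{i_1}\cdots\gamma_{i_k}$ appearing in $\partial O_h$ to satisfy $\sum_j \mathcal{A}(\gamma_{i_j})\leq\mathcal{A}(O_h)$; since $O_h$ realizes the minimum Reeb action on $Y$ (see the proof of Claim \ref{claction}), this forces $k\leq 1$. The case $k=1$ corresponds to a cylinder asymptotic to $O_h$ at its positive end, which is excluded by Claim \ref{clposend}, so only the empty word survives. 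Its coefficient is $1$ by Claim \ref{clunique} together with part (iv) of Theorem 1.1 of \cite{MR3981989}, which guarantees that in a regular $0$-dimensional moduli space the algebraic count agrees with the geometric one. Consequently $O_h$ is a primitive for $1$ at action level $\mathcal{A}(O_h)$, giving $l(\alpha^{\delta,\delta'})\leq\mathcal{A}(O_h)$.

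For the lower bound, suppose for contradiction that some $x\in CC^{\leq t}(Y,\alpha^{\delta,\delta'})$ with $t<\mathcal{A}(O_h)$ satisfies $\partial x = 1$. Every monomial appearing in $x$ has action strictly less than $\mathcal{A}(O_h)$, so in particular every linear term in $x$ is an orbit $\gamma$ with $\mathcal{A}(\gamma)<\mathcal{A}(O_h)$. Expanding $\partial x$ by the graded Leibniz rule, one observes that no product of $k\geq 2$ generators can contribute to the empty-word coefficient of $\partial x$ (after applying $\partial$ to one factor, at least $k-1\geq 1$ orbit generators survive as factors, and these cannot collapse to a scalar in the free polynomial algebra). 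Hence $\langle\partial x,1\rangle$ is a linear combination of plane counts $\langle\partial\gamma,1\rangle$ over orbits $\gamma$ appearing linearly in $x$. But by Claim \ref{claction} any orbit $\gamma$ bounding a pseudoholomorphic plane satisfies $\mathcal{A}(\gamma)\geq\mathcal{A}(O_h)>t$, so no such $\gamma$ can appear linearly in $x$. Thus $\langle\partial x,1\rangle=0\neq 1$, a contradiction.

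The delicate step is the identification of the algebraic count $\langle\partial O_h,1\rangle$ with the geometric count $1$ coming from Claim \ref{clunique}: the contact homology differential is defined via a thickening of moduli spaces, and without transversality one must invoke the regularity theorem from \cite{MR3981989} in order to conclude that the geometric and algebraic counts coincide. The resolution is that the stable finite-energy foliation of Wendl automatically provides regularity at the leaf $u_0$, so the comparison theorem applies. All other ingredients are bookkeeping with the action filtration and the behavior of $\partial$ under the Leibniz rule.
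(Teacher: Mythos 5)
Your proposal is correct and takes essentially the same route as the paper: the direct action computation for $O_h$ after the two perturbations, the assembly of Claims \ref{cldegree}--\ref{clunique} together with Theorem 1.1(iv) of \cite{MR3981989} to conclude $\partial O_h=1$ (giving $l(\alpha^{\delta,\delta'})\leq\mathcal{A}(O_h)$), and the minimality of $\mathcal{A}(O_h)$ among orbits bounding pseudoholomorphic planes for the reverse bound. Your explicit Leibniz-rule bookkeeping in the lower bound merely spells out what the paper compresses into its one-sentence appeal to Claim \ref{claction}.
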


\vspace{10pt}

\subsection{Construction of the 2-parameter family} \label{2family} \text{}\\ \par

The parameter domain of this family is $$H_\epsilon=\{(\ln(\sqrt{x}),\ln(y))\in\mathbb{R}^2\mid \ln(y)<\epsilon, \ x,y>0\}$$ It is depicted in figure \ref{fig:H_epsilon}.

\begin{figure}[h!]
    \centering
    \includegraphics[scale=0.9]{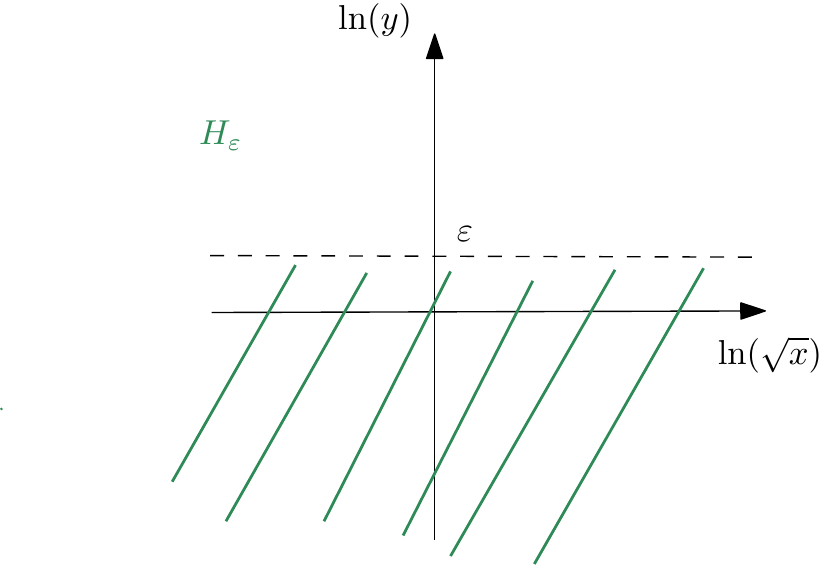}
    \caption{The source $H_\epsilon$}
    \label{fig:H_epsilon}
\end{figure}

The $\epsilon$ involved in the definition of $H_\epsilon$ depends on the contact form $\alpha$. It is less than the minimum of two quantities. The first one is the lowest action of a Reeb orbit of $(Y,\xi)$ before  we perform the Lutz twist. The second one is the action of a special Reeb orbit which helps us control volume. These are more thoroughly explained below.\\ \par

For $\varepsilon>0$ and for a knot K, we denote by $nbd(K,\varepsilon)$ a tubular neighbourhood of the knot $K$ of radius $\varepsilon$. Fix two transverse knots $K_1,K_2$ to $\xi$ in $Y$. Fix $\varepsilon>0$ sufficiently small such that $nbd(K_1,\varepsilon)\cap nbd(K_2,\varepsilon)=\emptyset$ and a contact form $\lambda$ on Y with $\ker(\lambda)=\xi$ which in tubular neighborhoods of $K_1,K_2$ looks like the standard form $d\theta+r^2d\phi$.
We perform a Lutz twist along $K_2$ and obtain an overtwisted contact form on Y. We work with the full twist here as we need to preserve the homotopy type of the plane field $\xi$. In order to be able to control the $l$-invariant, we impose the restriction that $ 2\pi h_2(r_+)(1+\delta \mu(\theta_-))<\epsilon$. This yields a differential form of the form 
$$\lambda_{ot}=\begin{cases}
\lambda & \text{on } Y\backslash (S^1\times D^2) \\
h_1(r)d\theta+h_2(r)d\phi & \text{on } S^1\times D^2
\end{cases}$$
where in the above formula $K_2$ is identified with $S^1$. In order to avoid confusion, we emphasize that $\epsilon$ is related to the $l$-invariant and $\varepsilon$ to the radius of the tubular neighbourhood of $K_2$.
\\ \par

The coordinates on the tubular neighborhood $nbd(K_2,\varepsilon)$ are $(\theta,r,\phi)\in S^1\times (0,\varepsilon)\times [0,2\pi)$.
We normalize by requiring $Vol(Y,\lambda_{ot})=\int_Y\lambda_{ot} \wedge d\lambda_{ot}=1$. Let $L$ be the $l$-invariant, namely the action of the horizontal orbits $\{r=r_+\}$ in $S^1 \times D^2$ or equivalently the lowest filtration level for which a primitive for the unit of the contact homology algebra appears. This normalization has the effect that $(0,\ln(L)) \in H_\epsilon$ maps under the bi-Lipschitz embedding to $\lambda_{ot}$. \\ \par 

We have that the $l$-invariant is the least action of an orbit bounding a unique pseudoholomorphic plane. After the Lutz twist we performed, and the subsequent perturbations, it is equal to $2\pi h_2(r_+)(1+\delta\mu(\theta_-))$, where $r_+$ is the smallest real number such that $h_2'(r_+)=0$.\\ \par

In order to affect the volume and make it equal to k, it is enough to multiply $\lambda_{ot}$ by $\sqrt{k}$. Moreover, we can modify the $l$-invariant just by modifying the choice of $h_2(r)$. We have to be careful though as we can only obtain information about the $l$-invariant as long as $2\pi h_2(r_+)(1+\delta\mu(\theta_-))$ is less than the next filtration level for which a primitive for the empty word appears. This, as already explained, is related to the quantity $\epsilon$ in the definition of $H_\epsilon$ before theorem \ref{quasiisometricembedding}. For this modification we require that $2\pi h_2(r_+)(1+\delta \mu(\theta_-))=l$. Pictorially, figure \ref{fig:Lutz} suggests that the modification has the result that the y-intercept of the path $(h_1(r),h_{2,l}(r))$ is $\frac{l}{2\pi(1+\delta \mu(\theta_-))}$. Note that the only restrictions we have for the functions $h_1(r),h_2(r)$ is the behavior of this path close to the endpoints of [0,$\varepsilon$] and that the vector $(h_1(r),h_{2,l}(r))$ has to wind around the origin of $\mathbb{R}^2$ once without ever being parallel to $(h_1'(r),h_{2,l}'(r))$. No other requirement in the interior of $[0,\varepsilon]$ (in particular close to $r_+\in (0,\varepsilon)$) is assumed.\\ \par

Note that this last alteration of the contact form does not have a significant impact on volume, as it is enough for the change to take place only in a tube of small radius $\varepsilon$. Yet, in order for the modification of the $l$-invariant to have no impact on volume, we compensate by multiplying the form by a bump function $\nu_l$, supported outside of the solid tube in question. In particular, $\nu_l$ is supported in the tubular neighborhood around $K_1$. This creates no new orbits of action less than that of the horizontal orbits $\{r=r_+\}$ as will be explained below.\\ \par

The next question that arises is about the $\epsilon$ in the definition of $H_\epsilon$. In short, $\epsilon$ is the logarithm of the largest controlled $l$-invariant one can have. The word control here means both being able to leave the volume of the contact manifold unchanged as we modify the $l$-invariant (this is where the number $B$ is needed in what follows) and determine what is the lowest action of a primitive for the unit of the contact homology algebra (and this is where the number $A$ comes from). The value of $\epsilon$ is determined as follows. \par 
First, let as before $A$ be the lowest action of a Reeb orbit of $(Y,\xi)$ before we perform the Lutz twist. Also, let \hypertarget{B}{$B$} be the action of the $D^2$-parametrized family of Reeb orbits in the tubular neighborhood of $K_1$. Since the contact form locally is $d\theta+r^2d\phi$, these orbits are specified by the local Reeb vector field $\partial_\theta$. Thus, we have to set $\epsilon := \min\{\ln(A),\ln(B)\}$.
\\ \par

After briefly describing the construction, we are now able to provide it concretely and thus define the embedding $F$. This embedding sends $(\ln(\sqrt{k}),\ln(l))\in H_\epsilon$ to the form

\begin{equation}\label{constr}
\alpha_{k,l}=\sqrt{k}\cdot\lambda_{ot}=\begin{cases}
\sqrt{k}\cdot (\nu_l+1) \cdot \lambda & \text{on } Y\backslash (S^1\times D^2) \\
\sqrt{k}\cdot(h_1(r)d\theta+h_{2,l}(r)d\phi) & \text{on } S^1\times D^2
\end{cases}
\end{equation}

where $h_2(r)$ and $h_{2,l}(r)$ are given in figure \ref{fig:Lutz} and $\nu_l$ will be explained shortly. Later, it will be helpful for our calculations to give an explicit parametrization for this path $(h_1(r),h_{2,l}(r))$ and this is what we will do. It will be mostly part of two ellipsoidal arcs. The reason for considering $(\ln(\sqrt{k}),\ln(l))$ instead of $(k,l)$ is that multiplication of forms by constants, i.e. flowing uniformly using the Liouville vector field in the symplectization is translated to linear movement in the source space $H_\epsilon$. \\ \par

\begin{figure}[h!]
    \centering
    \includegraphics[scale=0.8]{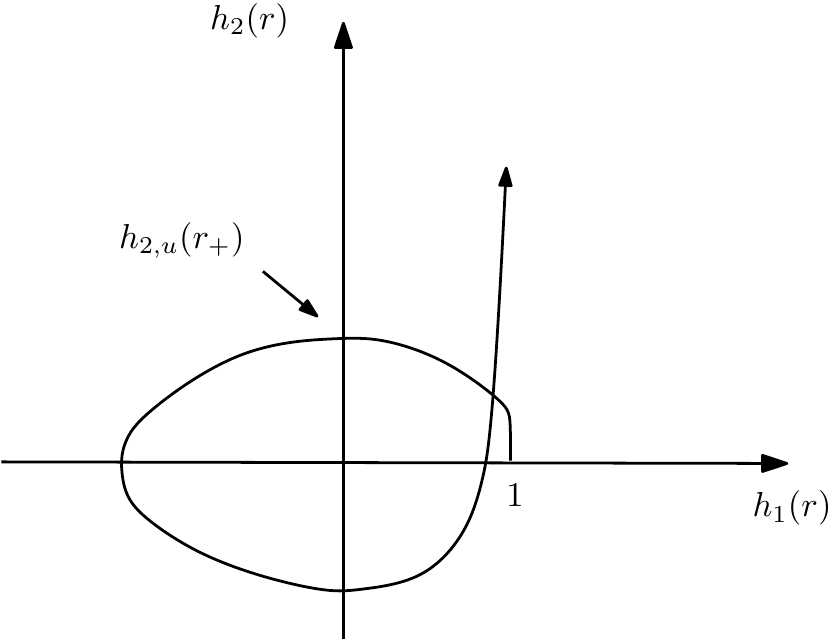}
    \caption{The path defining the particular Lutz twist}
    \label{fig:Lutz}
\end{figure}

We now explain the compensating function $\nu_l$. Figure \ref{fig:nu} helps on this task. In order to describe $\nu_l$ we work in the tubular neighborhood of $K_1$. This is equipped with the contact form $\lambda=d\theta+r^2d\phi$. The coordinates are given by $(\theta,r,\phi)\in[0,2\pi]\times[0,\delta]\times[0,2\pi]$. Pick $\epsilon_0<<\min\{\delta,2\pi\}$. Then $\nu_l$ is a bump function supported within $[0,\epsilon_0]\times[0,\delta-\epsilon_0]\times[0,2\pi]$. The form locally now becomes $\lambda_l=(\nu_l+1)\cdot (d\theta+r^2d\phi)$. The function $\nu_l$ is additionally required to have the property that if modifying $h_{2,l}(r)$ leads to a change in volume of $Y$ by adding $V_0 \in \mathbb{R}_{>0}$, $\nu_l$ controls the volume of the tubular neighborhood of $L_1$ and yields $Vol(K_1\times D^2,\lambda_l)=Vol(K_1\times D^2,\lambda)-V_0$.\par
Note that this process creates no new orbits of action less than $B$, so in particular no orbits of action less than $A$ after this modification could serve as a primitive of the unit of contact homology. Recall $A$ is the least action for a primitive of the empty word and since the volume compensating process does not reduce the action of any orbit, it cannot yield primitives for the empty word of action less than $A$.\\ \par

\begin{figure}
    \centering
    \includegraphics[scale=0.8]{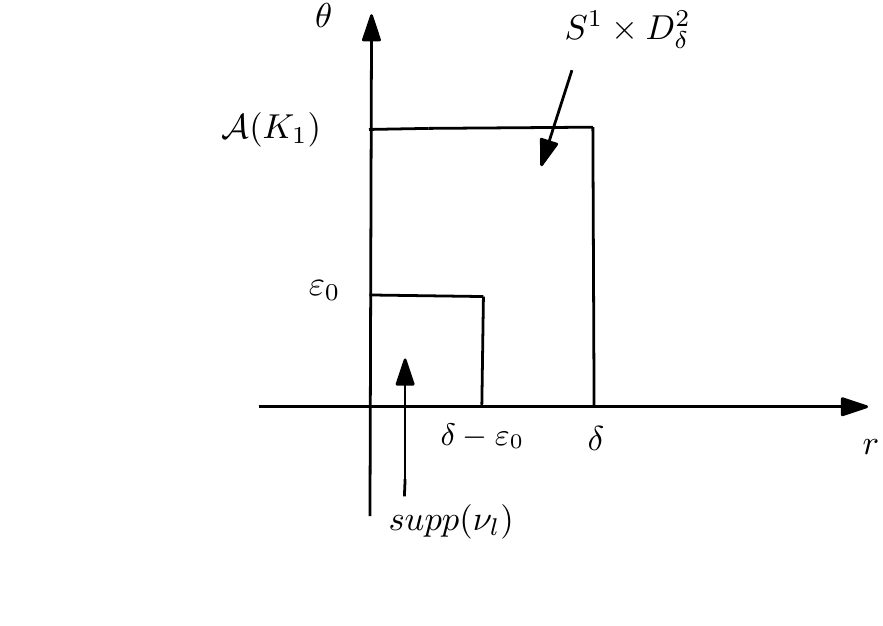}
    \caption{The support of the compensating function $\nu_l$}
    \label{fig:nu}
\end{figure}

 One can easily check that $\alpha_{k,l}$ is a contact form and less importantly see that multiplying our form by $\sqrt{k}$ corresponds to flowing using the Liouville vector field in the symplectization for $\ln(\sqrt{k})$ \say{seconds}. This has exactly the effect that the contact form gets multiplied by $\sqrt{k}$.\\ \par 

As expected, it turns out that $\epsilon$ can be directly related to the systolic ratio of the initial contact form $\lambda$. We have $\rho_{sys}(\alpha_{k,l})=\frac{T_{min}(\alpha_{k,l})^2}{Vol(Y,\alpha_{k,l})}=\frac{l^2}{k}$, as far as $O_h$ discussed in the previous section is the orbit with the least action. If before the twist, we can find a transverse $K_1$ with its surrounding orbits having action $\geq \mathcal{A}(O_h)$, we have $\rho_{sys}(\alpha_{k,l})=\frac{l^2}{k}$. In particular, in our examples we have $\sqrt{k \cdot \rho_{sys}(\alpha_{k,l})} = l= \ln(\epsilon)$, which in other words says $\epsilon=e^{\sqrt{k \cdot \rho_{sys}(\alpha_{k,l})}}$. Moreover, since we have $\rho_{sys}(\alpha_{k,l}) \leq \rho_{sys}(\lambda)$ essentially by construction, any $\epsilon \leq e^{\sqrt{k \cdot \rho_{sys}(\lambda)}}$ works for the construction. We can possibly choose an even larger $\epsilon$ if the orbit bounding a unique pseudoholomorphic plane for $\lambda$ is of large enough action.

\subsection{Proof of the bi-Lipschitz embedding theorem}\label{pfofqiemb}\text{} \\ \par

Our goal for this section is to prove the following inequalities 
$$\frac{1}{2}d_{\infty}(\Vec{x},\Vec{y})\leq d_{CBM}(F(\Vec{x}),F(\Vec{y})) \leq 2d_{\infty}(\Vec{x},\Vec{y})$$
where $F:(\mathbb{H},d_{\infty})\rightarrow (\mathcal{C}_{ot}^{Y,\xi},d_{CBM})$ the bi-Lipschitz embedding in question.\par To be more precise, the parameter domain of the 2-parameter family of forms $\alpha_{k,l}$ is $H_\epsilon=\{(\ln(\sqrt{x}),\ln(y))\in\mathbb{R}^2\mid \ln(y)<\epsilon, \ x,y>0\}$, so the embedding will actually be $F:(H_\epsilon,d_{\infty})\rightarrow (\mathcal{C}_{ot}^{Y,\xi},d_{CBM})$. This is not an issue though since $(\mathbb{H},d_\infty)$ and $(H_\epsilon,d_\infty)$ are isometric. The left inequality will be proved in subsection \ref{lineq} and the right one which is the more involved one in subsection \ref{rineq}.

This map will be defined shortly. Recall that $\epsilon$ is a number chosen to be less than the action of a primitive for the empty word of orbits before we perform any Lutz twist. We only have to worry about picking a sufficiently small $\epsilon$ in the case when $Y$ is already algebraically overtwisted. What we will actually prove will be slightly stronger, yet less symmetric.\\ \par

The following lemmas will be helpful below. They describe the behavior of volume and $l$-invariant under dilation of the contact form.

\begin{Lemma}\label{volume}
 If $\alpha \prec \beta$, then $Vol((Y,\alpha)) \leq Vol((Y,\beta))$. Also, $Vol((Y,C\cdot \alpha))=C^2\cdot Vol((Y,\alpha))$.
\end{Lemma}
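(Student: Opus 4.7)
The second identity is immediate: $d(C\alpha) = C\, d\alpha$ gives $C\alpha \wedge d(C\alpha) = C^{2}\, \alpha \wedge d\alpha$, and integrating over $Y$ yields $Vol(Y, C\alpha) = C^{2}\, Vol(Y, \alpha)$. For the monotonicity statement, unpack $\alpha \prec \beta$: there is a cs-embedding $\phi: (Y, \alpha) \to (SY, d\theta)$ with $\phi^*(\theta + \eta) = \alpha$ for some compactly supported exact $\eta = dg$, and $\phi(Y) \subseteq W(\beta)$. My plan is to recognize both $Vol(Y, \alpha)$ and $Vol(Y, \beta)$ as integrals of $\theta \wedge d\theta$ over their respective hypersurfaces in $SY$, and then apply Stokes to the region they cobound.

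The key pointwise computation is that $\phi^*\theta = \alpha - d(\phi^*g)$, so $d\phi^*\theta = d\alpha$ and
$$\phi^*(\theta \wedge d\theta) = (\alpha - d(\phi^*g)) \wedge d\alpha = \alpha \wedge d\alpha - d\bigl((\phi^*g)\, d\alpha\bigr).$$
Integrating over the closed manifold $Y$ kills the exact term, giving $\int_{\phi(Y)} \theta \wedge d\theta = Vol(Y, \alpha)$; the identical computation for the standard embedding $s_\beta$ (where $s_\beta^*\theta = \beta$) yields $\int_{Y_\beta} \theta \wedge d\theta = Vol(Y, \beta)$. Let $R \subset W(\beta)$ denote the compact region bounded below by $\phi(Y)$ and above by $Y_\beta$. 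Since $d(\theta \wedge d\theta) = (d\theta)^{2}$ is the Liouville volume form on $SY$, Stokes applied to $R$ gives
$$0 \;\leq\; \int_R (d\theta)^{2} \;=\; \int_{Y_\beta} \theta \wedge d\theta \;-\; \int_{\phi(Y)} \theta \wedge d\theta \;=\; Vol(Y, \beta) - Vol(Y, \alpha),$$
and the claimed inequality follows.

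The main obstacle is justifying that $\phi(Y)$ and $Y_\beta$ cobound a well-defined compact region $R$ of the correct orientation inside $W(\beta)$. When $\phi(Y)$ happens to be a graph over $Y$ in the $\beta$-splitting of $SY$ this is transparent, but in general $\phi(Y)$ is only transverse to the modified Liouville field $L_{\theta + \eta}$ and need not be a graph. To handle this, one notes that $\phi(Y)$ is a compact embedded hypersurface in $SY \cong \R_+ \times Y$ homologous to $Y \times \{1\}$ (as is every transverse section of this trivial $\R_+$-bundle), hence separates $SY$; since $\phi(Y) \subseteq W(\beta)$ by hypothesis, the intersection of the ``upper'' component of $SY \setminus \phi(Y)$ with $W(\beta)$ is the sought region $R$. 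Equivalently, one may invoke the embedding $F: W(\alpha) \to SY$ from the lemma used in the proof of Claim \ref{trans}, which realizes the region below $\phi(Y)$ directly as $F(W(\alpha))$ contained in $W(\beta)$, and take $R = W(\beta) \setminus F(W(\alpha))$.
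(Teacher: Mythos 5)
Your proof is correct and follows essentially the same route as the paper: first the pullback computation showing the cs-embedding preserves the hypersurface volume (the exact correction $d(\phi^*g)\wedge d\alpha$ integrating to zero over the closed $Y$), then Stokes' theorem applied to the region cobounded by $\phi(Y)$ and $Y_\beta$ inside $W(\beta)$, using positivity of $(d\theta)^2$. Your extra discussion of why the two hypersurfaces cobound a compact region only elaborates on what the paper summarizes as the existence of a Liouville cobordism, and the scaling identity is handled identically.
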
 

\begin{proof}
First, we prove that $Vol(\phi(Y))=Vol(Y,\alpha))$, or in other words that we can only map $(Y,\alpha)$ into $SY$ in a volume preserving manner. Recall that $\alpha \prec \beta$ means that there exist a cs-embedding $\phi:(Y,\alpha) \rightarrow W(\beta)\subset (SY,d(r\beta))$ with $\phi^*(r\beta+\eta)=\alpha$, for some exact, compactly supported in a neighbourhood of $\phi(Y)$, one-form $\eta=df$. Recall also that the way we measure the volume of a hypersurface in $SY$ is first by considering the form $\alpha_0=r\beta|_{\phi(Y)}$ and then 
$Vol(\phi(Y))=\displaystyle\int_{\phi(Y)}\alpha_0\wedge d\alpha_0$.\\ We thus have,

\begin{gather*}
Vol(\phi(Y))=\displaystyle\int_{\phi(Y)}\alpha_0\wedge d\alpha_0=\displaystyle\int_{Y}\phi^*\alpha_0\wedge d(\phi^*(\alpha_0))=\displaystyle\int_{Y} (\alpha-\phi^*(\eta))\wedge d(\alpha-\phi^*(\eta)) \\
=\displaystyle\int_{Y}\alpha \wedge d\alpha - \displaystyle\int_{Y}\alpha \wedge d(\phi^*\eta) - \displaystyle\int_{Y} \phi^*\eta \wedge d\alpha +\displaystyle\int_{Y} \phi^*\eta\wedge d(\phi^*\eta)\\
= \displaystyle\int_{Y}\alpha \wedge d\alpha + \displaystyle\int_{Y} \phi^*\eta \wedge d\alpha 
=\displaystyle\int_{Y}\alpha \wedge d\alpha + \displaystyle\int_{\bd Y=\emptyset} \phi^*f \wedge \alpha\\
=\displaystyle\int_{Y}\alpha \wedge d\alpha=Vol(Y,\alpha)
\end{gather*}
where the $5^{th}$ equality above follows from the fact that $\eta$ is exact and the $6^{th}$ one by Stokes' theorem.

The embedding $\phi(Y)$ yields that there exists a Liouville cobordism between $\phi(Y)$ and $\{1\}\times Y$ in $(SY,d(r\beta))$. Stokes' theorem implies the first claim. The second claim is obvious from the definition of the volume of $Y$.
\end{proof}

\begin{Rem}
The proof of this shows in particular that the allowed cs-embeddings preserve the volume.
\end{Rem}

\begin{Lemma}\label{linvineq}
If $\alpha \prec \beta$, then $l(\alpha) \leq l(\beta)$. Also, $l(C\cdot \alpha)=C \cdot l(\alpha)$
\end{Lemma}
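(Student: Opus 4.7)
The plan is to derive both statements from the functoriality of filtered contact homology with respect to exact symplectic cobordisms, as recalled at the end of Section \ref{Contact homology}. For the first assertion, suppose $\alpha \prec \beta$ and fix a cs-embedding $\phi:(Y,\alpha)\to (SY,d\theta)$ with $\phi(Y)\subset W(\beta)$ and $\phi^*(\theta+\eta)=\alpha$ for some exact, compactly supported 1-form $\eta=df$. I would first verify that the compact region $V\subset W(\beta)$ bounded by $\phi(Y)$ and $Y_\beta$ is an exact symplectic cobordism with $Y_\beta$ as its convex end (transverse to $L_\theta$) and $\phi(Y)$ as its concave end (transverse to $L_{\theta+\eta}$). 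Because $\eta$ is exact, $d\theta=d(\theta+\eta)$ on all of $V$, and $\phi^*\theta=\alpha-d(\phi^*f)$ differs from $\alpha$ only by an exact 1-form; hence closed Reeb loops on $\phi(Y)$ have the same action whether computed against $\theta|_{\phi(Y)}$ or against $\alpha$.

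After a smooth interpolation between $\theta+\eta$ near $\phi(Y)$ and $\theta$ near $Y_\beta$ through Liouville primitives on $V$, the cobordism sits in a single deformation class of exact symplectic cobordisms from $\alpha$ to $\beta$ and induces, via the standard SFT machinery, a unital $\mathbb{Q}$-algebra chain map
$$\Phi: CC_*(Y,\beta)^{\leq t} \longrightarrow CC_*(Y,\alpha)^{\leq t}$$
for every threshold $t>0$. This extends the pointwise-inequality functoriality cited in the excerpt to the more general setting of $\prec$. Unitality gives $\Phi(1_\beta)=1_\alpha$, so if $x\in CC_*(Y,\beta)^{\leq t}$ satisfies $\partial x=1_\beta$, then $\partial \Phi(x)=\Phi(\partial x)=1_\alpha$ and $\mathcal{A}(\Phi(x))\leq t$; taking infima over admissible $t$ yields $l(\alpha)\leq l(\beta)$.

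For the scaling statement, I would observe that $R_{C\alpha}=C^{-1}R_\alpha$, so closed Reeb orbits of $\alpha$ and $C\alpha$ coincide as unparametrized loops while their actions scale by $\int_\gamma C\alpha=C\int_\gamma\alpha$. The tautological identification of generators then gives a canonical isomorphism of DG-algebras $CC_*(Y,C\alpha)\cong CC_*(Y,\alpha)$ sending unit to unit, under which the filtration level $Ct$ on the $C\alpha$-side corresponds to level $t$ on the $\alpha$-side. Primitives of the unit therefore rescale in action by $C$, giving $l(C\alpha)=C\cdot l(\alpha)$.

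The main obstacle is the first step: producing the filtered cobordism map from a cs-embedding rather than from pointwise comparability of contact forms. This reduces to constructing the interpolating path of Liouville primitives on $V$; because $\eta$ is exact and compactly supported, the interpolation is unobstructed, after which the cobordism map of \cite{MR3981989} applies with all its expected formal properties (unitality, action decrease, and the chain map identity).
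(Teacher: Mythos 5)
Your argument is correct and takes essentially the same route as the paper: the cs-embedding provided by $\alpha \prec \beta$ yields an exact (trivial Liouville) cobordism between $\phi(Y)$ and $Y_\beta$, whose induced unital map on filtered contact homology $CH_*(\beta)^{\leq t} \rightarrow CH_*(\alpha)^{\leq t}$ forces the vanishing level of the unit to satisfy $l(\alpha)\leq l(\beta)$, while the scaling identity follows from the rescaling of Reeb dynamics and actions under $\alpha \mapsto C\alpha$. Your chain-level unitality argument and the interpolation of Liouville primitives merely spell out details the paper leaves implicit.
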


\begin{proof}
The assumption of the lemma means that there is an embedding $\phi : (Y,\alpha) \rightarrow W(\beta)$ which implies the existence of a trivial Liouville cobordism between $\phi(Y)$ and $Y_\beta$. So, we get a map $CH_*(\beta)^{\leq t} \rightarrow CH_*(\alpha)^{\leq t}$ which maps 0 to 0. In particular, the filtration level for which the contact invariant for $\beta$ vanishes, i.e. $l(\beta)$, has to be larger or equal to the filtration level for which the contact invariant for $\alpha$ vanishes, i.e. $l(\alpha)$. This proves $l(\alpha) \leq l(\beta)$.\par The second claim follows by the definition of the action of an orbit and the fact that multiplying the contact form by some number C just rescales the dynamics.
\end{proof}

\subsubsection{Left Inequality}\label{lineq} \text{} \\ \par

 Let $\vec{x}=(\ln(\sqrt{k_1}),\ln(l_1))$ and $\vec{y}=(\ln(\sqrt{k_2}),\ln(l_2))$ so that $F(\vec{x})=\alpha_{k_1,l_1}=\alpha$ and $F(\vec{y})=\alpha_{k_2,l_2}=\beta$. By the definition of $d_{CBM}$ and the previous lemma, as far as volume is concerned, we have that  
$$
\frac{Vol(F(\Vec{y}))}{C^2}\leq Vol(F(\Vec{x})) \leq C^2\cdot Vol(F(\Vec{y}))
$$
for any $C$ such that $\alpha_{k_2,l_2} \prec C \cdot \alpha_{k_1,l_1}$ and $\alpha_{k_1,l_1} \prec C \cdot \alpha_{k_2,l_2}$.\\ \par

 The above inequalities are equivalent to 
 \begin{equation}
 \label{volm}\ln{\frac{1}{C}}\leq \frac{1}{2} \ln{\frac{Vol(F(\Vec{x}))}{Vol(F(\Vec{y}))}}=\frac{1}{2} \ln\Big(\frac{k_1}{k_2}\Big)= \ln(\sqrt{k_1})-\ln(\sqrt{k_2})\leq \ln{C}
 \end{equation} \par
 
 Because of the symmetry in the definition of $d_{CBM}$, we also get
 $$
\frac{Vol(F(\Vec{x}))}{C^2}\leq Vol(F(\Vec{y})) \leq C^2\cdot Vol(F(\Vec{x}))
$$
for any $C$ such that $\alpha_{k_2,l_2} \prec C \cdot \alpha_{k_1,l_1}$ and $\alpha_{k_1,l_1} \prec C \cdot \alpha_{k_2,l_2}$.\\ \par

These inequalities are again equivalent to 

\begin{equation}
 \label{volm1}\ln{\frac{1}{C}}\leq \frac{1}{2} \ln{\frac{Vol(F(\Vec{y}))}{Vol(F(\Vec{x}))}}=\frac{1}{2} \ln\Big(\frac{k_2}{k_1}\Big)= \ln(\sqrt{k_2})-\ln(\sqrt{k_1})\leq \ln{C}
 \end{equation} 

Combining now \ref{volm} and \ref{volm1} and taking infimum over such $C$ we obtain 
\begin{equation}\label{LHSV}
    |\ln(\sqrt{k_1})-\ln(\sqrt{k_2})| \leq d_{CBM}(\alpha,\beta)
\end{equation}

\par

 Next, for any C such that $\alpha_{k,l_1} \prec C \cdot \alpha_{k,l_2}$ and $\alpha_{k,l_2} \prec C \cdot \alpha_{k,l_1}$ or equivalently $\alpha_{k,l_1} \prec C \cdot \alpha_{k,l_2}$  and $\frac{1}{C}\alpha_{k,l_2} \prec \alpha_{k,l_1}$, we get 
 
 $$\frac{1}{C}\cdot \alpha_{k,l_2} \prec \alpha_{k,l_1} \prec C \cdot \alpha_{k,l_2}$$
 which implies by the previous lemma that
 
 $$\frac{1}{C}l_2 \leq l_1 \leq Cl_2$$
 
Dividing by $l_2$ and taking logarithms yields

$$\ln\Big(\frac{1}{C}\Big)\leq \ln\Big(\frac{l_1}{l_2}\Big) \leq \ln(C)$$ \par

Again by the symmetry in the definition we obtain a similar inequality, 

$$\ln\Big(\frac{1}{C}\Big)\leq \ln\Big(\frac{l_2}{l_1}\Big) \leq \ln(C)$$

Then, using both inequalities and taking infimum over all such $C$ we obtain 

\begin{equation}\label{LHSL}
    |\ln(l_2)-\ln(l_1)| \leq d_{CBM}(\alpha,\beta)
\end{equation}
 Hence (\ref{LHSL}) and (\ref{LHSV}) yield 
\begin{equation}\label{LHS}
    d_\infty(\vec{x},\vec{y}) \leq d_{CBM}(\alpha,\beta)
\end{equation} 

\subsubsection{Right Inequality}\label{rineq} \text{ } \\ \par

Obtaining the right hand inequality of the bi-Lipschitz condition above follows by the fact that changing the parameters in the 2-parameter family of contact forms is compatible with the following triangle inequality. In order to keep things clear, we abuse notation and instead of $d_{CBM}(\alpha_{k_1,l_1},\alpha_{k_2,l_2})$  we write $d_{CBM}((\sqrt{k_1},l_1),(\sqrt{k_2},l_2))$. 

\begin{gather}
    d_{CBM}(F(\Vec{x}),F(\Vec{y}))=d_{CBM}((\sqrt{k_1},l_1),(\sqrt{k_2},l_2)) \leq \nonumber \\ d_{CBM}\Big(\Big(\sqrt{k_1},l_1\Big),\Big(\sqrt{k_2},\sqrt{\frac{k_2}{k_1}}l_1\Big)\Big)+d_{CBM}\Big(\Big(\sqrt{k_2},\sqrt{\frac{k_2}{k_1}}l_1\Big),\Big(\sqrt{k_2},l_2\Big)\Big) 
    \label{keyinequality}
\end{gather}

 We are going to study the behavior after modifying each parameter separately, though in the case of volume this has a small effect on the $l$-invariant and thus we have to travel through the point $\Big(\ln(\sqrt{k_2}),\ln\Big(\sqrt{\frac{k_2}{k_1}}l_1\Big)\Big)$ in $H_\epsilon$. This is explained below.\\ \par

First, we modify volume. As mentioned, in contrast with the modification of the $l$-invariant, our modification procedure does not allow us to modify solely the volume, or in other words we cannot only move horizontally in $H_\epsilon$. For this reason, we will work with a view towards the triangle inequality \ref{keyinequality}. Let $\alpha=F(\Vec{x})=F(\ln(\sqrt{k_1}),\ln(l_1))$ and $\gamma=F(\Vec{y})=F(\ln(\sqrt{k_2}),\ln \big(\sqrt{\frac{k_2}{k_1}}l_1\big))$ with $k_1 \leq k_2$. We have
\begin{equation}
d_{CBM}(\alpha,\gamma)=\{\ln{C}\mid \frac{1}{C}\cdot \gamma \prec \alpha\prec C\cdot \gamma\} \leq \ln{\sqrt{\frac{k_2}{k_1}}}=\ln{\sqrt{k_2}}-\ln{\sqrt{k_1}} \nonumber
\end{equation}
because in this case we have that $\alpha=\sqrt{\frac{k_1}{k_2}}\cdot \gamma$, so one such $C$ that obviously works is $\max\Big\{\sqrt{\frac{k_1}{k_2}},\sqrt{\frac{k_2}{k_1}}\Big\}$. If we don't assume $k_1 \leq k_2$ we just have to use an absolute value in the above inequality. Hence, we obtain 

\begin{equation}\label{RHSV}
d_{CBM}(\alpha,\gamma) \leq \big|\ln{\sqrt{k_2}}-\ln{\sqrt{k_1}}\big|
\end{equation}

Note again that changing volume modifies the $l$-invariant precisely turning $l$ into $\sqrt{\frac{k_2}{k_1}}l$. This explains the form of the middle term appearing in \ref{keyinequality}.\\ \par

Let's now discuss the way to modify $h_2(r)$, i.e. effectively the $l$-invariant. Recall that $h_2(r)$ is a function from $[0,\varepsilon)$ to $\mathbb{R}$ satisfying the properties for a Lutz twist from section \ref{properties}. Recall also that in order to alter it, we additionally require $2\pi h_{2,l}(r_+)(1+\delta \mu(\theta_+))=l$ and that $\forall l$, $(h_1(r),h_{2,l}(r))$ is never parallel to $(h_1'(r),h_{2,l}'(r))$ in order for the Lutz twist to yield a contact form.\\ \par

We consider a smooth family $h_{2,t}:[0,\varepsilon)\times[l_1,l_2] \rightarrow \mathbb{R}$ interpolating between $h_{2,l_1}$ and $h_{2,l_2}$. This family is assumed $\forall t \in [l_1,l_2]$ to satisfy the 3 properties required for a Lutz twist as in section \ref{properties}. Namely, $\forall t \in [l_1,l_2]$ a Lutz twist can be performed using $h_{2,t}$ instead of $h_2$. It will also be helpful to assume either $\frac{\partial h_{2,t}}{\partial t} \geq 0$ or $\frac{\partial h_{2,t}}{\partial t} \leq 0$ depending on whether we increase or decrease the value of the $l$-invariant. This family of functions yields a family of contact forms on the manifold $Y$ given by

$$\alpha_t=\lambda_{ot}^t=\begin{cases}
(\nu_t+1) \cdot \lambda & \text{on } Y\backslash (S^1\times D^2) \\
(h_1(r)d\theta+h_{2,t}(r)d\phi) & \text{on } S^1\times D^2
\end{cases}$$

Where $\nu_t$ as described previously, is designed to compensate for the small change in volume when modifying $h_{2,l_1}$ or stated differently, to allow us to move vertically in $H_\epsilon$. We have already shown that the effect of this in dynamics is controlled well enough for our computations. Namely, no new orbits of less action are created when multiplying by the compensating function. We denote by $\xi_t$ the corresponding contact structures and we let $\gamma=\alpha_1$, $\beta=\alpha_2$. \\ \par

Since we have a smooth family of contact structures, Gray's stability theorem provides a  function $f_t$ such that $\psi^*_t\gamma_t=f_t\alpha_1=f_t\gamma$, where $\psi_t$ a smooth isotopy of $Y$. We will specify $f_t$ using Moser's trick as in the proof of Gray's theorem. \\ \par

First, we have 
\begin{equation}\label{rhs1}
d_{CBM}(\gamma,\beta) \leq ||\ln{f_1}-\ln{f_2}||_{\infty}=||\ln{f_2}||_{\infty}
\end{equation}

This can be seen as follows. We have that $f_1(y)=1, \forall y \in Y$. Moreover, $\beta=f_2\gamma$. This implies that $s_\beta(Y) \subseteq W(\beta) \subseteq W(||f_2||_{\infty}\cdot \gamma)$. It is also clear that $s_\gamma(Y) \subseteq W(\beta) \subseteq W(||f_2||_{\infty}\cdot \beta)$ as $\beta \neq \gamma$.\\

 \par
Now, working as in the proof of Gray's stability theorem, we have $\mu_t=\Dot{\alpha}_t(R_t)=\frac{d}{dt}(\ln{f_t})\circ \psi_t^{-1}$ which in this case translates to

\begin{equation}\label{defnofmu}
\mu_t=\frac{d}{dt}(h_{2,t}(r))\cdot\Big(\frac{-h_1'(r)}{D_t(r)}\Big)
\end{equation}
where $D_t(r)=h_1(r)h_{2,t}'(r)-h_1'(r)h_{2,t}(r)$.\\ \par

If we consider the 1-parameter family of contact forms $\{\alpha_{1,l}\}_{l=s}^{l=t}$, we have 

$$||\ln{f_t}-\ln{f_s}||_{\infty} = \Big|\Big|\int_s^t\mu_u \circ \psi_u du\Big|\Big|_{\infty}$$

Replacing $\mu$ by its formula as in (\ref{defnofmu}) and using the triangle inequality for integrals we get

\begin{equation}\label{tobebounded}
||\ln{f_t}-\ln{f_s}||_{\infty} \leq \int_s^t \Big|\Big|\frac{d}{du}(h_{2,u}(r))\cdot\Big(\frac{-h_1'(r)}{D_u(r)}\Big)\Big|\Big|_{\infty}du
\end{equation}

So, it all boils down then to finding an upper bound for the right hand side of equation (\ref{tobebounded}). Although the analysis can be done in general, we are going to work with the concrete case where $(h_1(r),h_2(r))$ is mostly part of arcs of two ellipses. This is both concrete and sufficient. \\ \par

To this end, we parametrize $(h_1(r),h_{2,l}(r))$ as follows. Recall that we denote by $\varepsilon$ the radius of the Lutz tube. Pick some small $\varepsilon_0$ such that $\varepsilon >> \varepsilon_0 >0$. This $\varepsilon_0$ will be the first time when the path $(h_1(r),h_{2,l}(r))$ changes behavior. Moreover, pick some small $\delta_1>0$ and $\delta_2>0$ such that $(1+\delta_1)\cos(2\pi \varepsilon_0)=1$ and $\varepsilon_0^2=(1+\delta_2)\frac{u\sin(2\pi \varepsilon_0)}{2\pi(1+\delta \mu(\theta_-))}$. These $\delta_1,\delta_2$ ensure continuity of the path $(h_1(r),h_{2,l}(r))$ since for $0\leq r \leq \varepsilon_0$ the path is $(h_1(r),h_{2,l}(r))=(1,r^2)$ and for $r$ slightly larger that $\varepsilon_0$ the path is $(h_1(r),h_{2,l}(r))=\Big((1+\delta_1)\cos(2\pi r),(1+\delta_2)\frac{u\sin(2\pi r)}{2\pi(1+\delta \mu(\theta_-))}\Big)$. So, to summarize, we let

$$h_1(r):=\begin{cases}
1, & 0 \leq r \leq \varepsilon_0 \\
(1+\delta_1)\cos(2\pi r), & \varepsilon_0 < r \leq \frac{\varepsilon}{2}
\end{cases}$$

and moreover we let

$$h_{2,u}(r):=\begin{cases}
r^2 & , \quad 0 \leq r \leq \varepsilon_0 \\
(1+\delta_2)\frac{u\sin(2\pi r)}{2\pi(1+\delta \mu(\theta_-))} & , \quad \varepsilon_0 < r \leq \frac{\varepsilon}{2}
\end{cases}$$ \par

The paths can be extended to $(\frac{\varepsilon
}{2},\varepsilon)$ in a similar way. This way needs to respect that for any time parameter $u$, the absolute value of the second $y$-intercept is larger than the one of the first $y$-intercept $h_{2,u}(r_+)$, it is independent of $u$ and close to $\varepsilon$ the paths become $(h_1(r),h_{2,u}(r))=(1,r^2)$. The first requirement is in order to control the $l$-invariant, the second to control an upper bound in the proof and the last requirement in order for the path to be compatible with the definition of a path used to perform a Lutz twist. \\ \par

As it is obvious from their definition, $h_1(r)$, $h_{2,u}(r)$ are not even differentiable at $r=\varepsilon_0$. We will use a standard mollifier called the truncated Gaussian distribution in order to make both functions smooth. For some $\delta_0 << \varepsilon_0$, we consider the interval $(\varepsilon_0-\delta_0,\varepsilon_0+\delta_0)$ over which the smoothing will take place. The basic idea in what follows is to convolute our functions $h_1(r)$ and $h_{2,u}(r)$ with a truncated Gaussian supported in $(\varepsilon_0-\delta_0,\varepsilon_0+\delta_0)$ in order to define the respective smoothings $H_1(r)$ and $H_{2,u}(r)$. As it is evident here, capitalizing the notation means smoothing. \\ \par
In what follows, we will be using the truncated Gaussian

$$g(r;\mu,\sigma,a,b)=\frac{2}{\sigma}\frac{\frac{1}{\sqrt{2\pi}}\exp(-\frac{1}{2}(\frac{r-\mu}{\sigma})^2)}{(erf(\frac{b-\mu}{\sqrt{2}\sigma})-erf(\frac{a-\mu}{\sqrt{2}\sigma}))}$$
where the error function is given by $$erf(x)=\frac{2}{\sqrt{\pi}}\displaystyle\int_0^{x}e^{-t^2}dt$$

The first truncated Gaussian we will use in our case is the following

$$g(x):=\begin{cases}
\frac{5exp\big(\frac{-25(x-\varepsilon_0)^2}{2\delta_0^2}\big)}{\delta_0\sqrt{2}\Big(\displaystyle\int_{\frac{-5}{\sqrt{2}}}^{\frac{5}{\sqrt{2}}}e^{-t^2}dt\Big)} & \text{,  }  \varepsilon_0-\delta_0 \leq x \leq \varepsilon_0+\delta_0\\
0  & \text{, otherwise}

\end{cases}$$

This is basically the truncated Gaussian with carefully chosen parameters  $$g(x;\mu=\varepsilon_0,\sigma=\frac{\delta_0}{5},a=\varepsilon-\delta_0,b=\varepsilon+\delta_0)$$
Note that the non trivial part of this Gaussian can be written as a constant $O(\frac{1}{\delta_0})$ of order $\frac{1}{\delta_0}$ times the function $exp\big(\frac{-25(x-\varepsilon_0)^2}{2\delta_0^2}\big)$. Moreover, its derivative is of order $O(\frac{1}{\delta_0^3})$. We will use the notation $Gf$ which for any function $f$ stands for the convolution of $f$ with the specific truncated Gaussian chosen above. $Gf$ is smooth and is supported in $(\varepsilon_0-\delta_0,\varepsilon_0+\delta_0)$.\\ \par

We now define the smoothing of any function $f(r)$ supported in $[\varepsilon_0-\delta_0,\varepsilon_0+\delta_0]$.\\ \par
Let $B_1=(\varepsilon_0-\frac{8\delta_0}{7},\varepsilon_0-\frac{6\delta_0}{7})$ and $B_2=(\varepsilon_0+\frac{6\delta_0}{7},\varepsilon_0+\frac{8\delta_0}{7})$. Moreover, let $g_{B_1}=g(r;\varepsilon_0-\delta_0,\frac{\delta_0}{10},\varepsilon_0-\frac{8\delta_0}{7},\varepsilon_0-\frac{6\delta_0}{7})$ and $g_{B_2}=g(r;\varepsilon_0+\delta_0,\frac{\delta_0}{10},\varepsilon_0+\frac{6\delta_0}{7},\varepsilon_0+\frac{8\delta_0}{7})$. Using $g_{B_1},g_{B_2}$ we define the smooth cutoff functions 
$$\overline{\chi}_{B_i}(r)=\displaystyle\int_{\R}\chi_{B_i}(r-y)g_{B_i}(y)dy,\qquad i=1,2$$
where $\chi_{B_i}$ the characteristic function of the interval $B_i,\quad i=1,2$.\\ \par

Now the smoothing of any function $f$ supported in $(\varepsilon_0-\delta_0,\varepsilon_0+\delta_0)$ is defined as

\begin{gather*}
F(r):=(\overline{\chi}_{B_1(r)}+\overline{\chi}_{B_2}(r))f(r)+Gf(r)\\
=(\overline{\chi}_{B_1}(r)+\overline{\chi}_{B_2}(r))f(r)+\displaystyle\int_{\varepsilon_0-\delta_0}^{\varepsilon_0+\delta_0}g(r-t)f(t)dt
\end{gather*}

It is easy to check that if the inputs of the smoothing process are $h_1(r),h_{2,u}(r)$ and the outputs are $H_1(r),H_{2,u}(r)$ then the outputs are smooth functions that agree with $h_1(r)$ and $h_{2,u}(r)$ at the endpoints of the interval $[\varepsilon_0-\delta_0,\varepsilon_0+\delta_0]$. So, in $[\varepsilon_0-\delta_0,\varepsilon_0+\delta_0]$ we replace the continuous path $(h_1(r),h_{2,u}(r))$ by the smooth path $(H_1(r),H_{2,u}(r))$. The following picture is illuminating.

\begin{figure}[h!]
    \centering
    \includegraphics[scale=0.9]{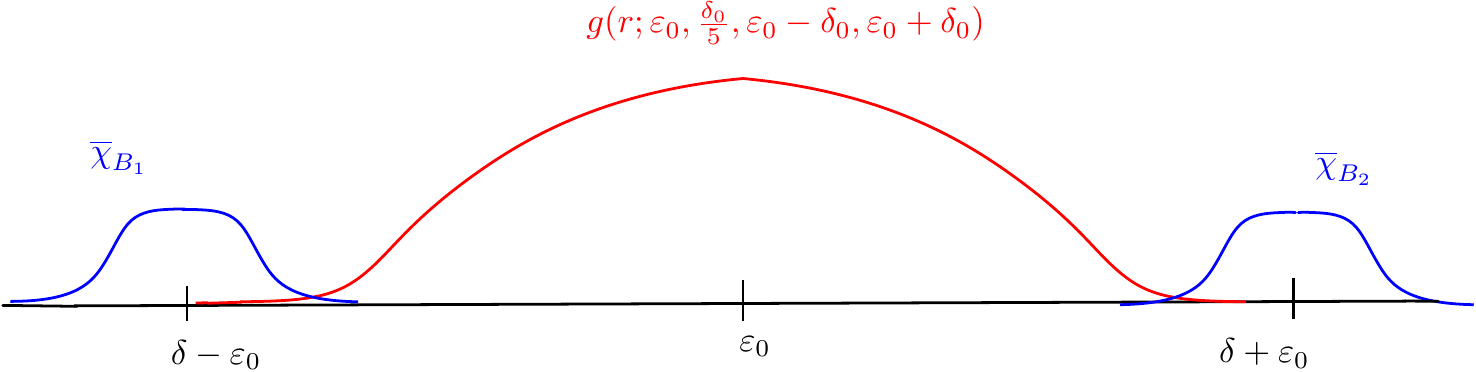}
    \caption{The functions used for smoothing}
    \label{fig:smoothing}
\end{figure}

~\par

Our task is to bound (\ref{tobebounded}). First, the maximum of the integrand  in (\ref{tobebounded}) cannot occur if $r\in [0,\varepsilon_0] \cup [\frac{\varepsilon}{2},\varepsilon)$ since there we have either $\frac{d}{du}(h_{2,u}(r))=0$ or  $h_1'(r)=0$. Next, if $\varepsilon_0 +\delta_0 \leq r \leq \frac{\varepsilon}{2}$ then we get

\begin{gather*}
    h_1'(r)=(1+\delta_1)2\pi \sin(2\pi r)\quad \text{ and } \quad 
    D_u(r)=\frac{u(1+\delta_1)(1+\delta_2)}{1+\delta\mu(\theta_-)}
\end{gather*}
so this yields

\begin{gather*}
    \frac{-h_1'(r)}{D_u(r)}=\frac{(1+\delta\mu(\theta_-))2\pi \sin(2\pi r)(1+\delta_2)}{u}
\end{gather*}
 and thus (\ref{tobebounded}) becomes
 
\begin{gather*}
\int_s^t\Big|\Big| \frac{d}{du}(h_{2,u}(r))\cdot\Big(\frac{(1+\delta\mu(\theta_-))2\pi \sin(2\pi r)(1+\delta_2)}{u}\Big)\Big|\Big|_{\infty}du \\
\leq |(1+\delta\mu(\theta_-))2\pi (1+\delta_2)| \int_s^t\Big|\Big| \frac{d}{du}(h_{2,u}(r))\cdot\Big(\frac{1}{u}\Big)\Big|\Big|_{\infty}du
\\
=|(1+\delta\mu(\theta_-))2\pi (1+\delta_2)| \int_s^t \Big| \frac{d}{du}(h_{2,u}(r_+))\cdot\Big(\frac{1}{(1+\delta\mu(\theta_-))2\pi (1+\delta_2)h_{2,u}(r_+)}\Big)\Big|du
\\
=\int_s^t \Big| \frac{d}{du}(h_{2,u}(r_+))\cdot\Big(\frac{1}{h_{2,u}(r_+)}\Big)\Big|du\\ =|\ln(h_{2,t}(r_+))-\ln(h_{2,s}(r_+))|
=|\ln(l_t)-\ln(l_s)|
\end{gather*}

The first equality above comes from the fact that $\Big|\Big| \frac{d}{du}(h_{2,u}(r))\Big|\Big|_\infty$ occurs at $r=r_+$. Recall also that the $l$-invariant is designed to be $h_{2,u}(r_+)=\frac{(1+\delta_1)u\sin(2\pi r)}{2\pi(1+\delta \mu(\theta_-))}$. So altering $u$ has the effect of altering the $l$-invariant. For brevity, we denote the $l$-invariant at time $t$ by $l_t$. \\ \par

The only thing to show now is that the smoothing does not have the effect that $\big|\frac{-H_1'(r)}{\mathcal{D}_u(r)}\big|>\frac{1}{u}$ in $(\varepsilon_0-\delta_0,\varepsilon_0+\delta_0)$, where $\D_u(r)=H_1(r)H_{2,u}'(r)-H_1'(r)H_{2,u}(r)$ the smooth version of $D_u(r)$. What we will show is that $\delta_0$ can be chosen sufficiently small in order to ensure this as the quotient is of order $\delta_0$.\\ \par

For $r \in [\varepsilon_0-\delta_0,\varepsilon_0+\delta_0]$ we have 

$$H_1'(r)=(\overline{\chi}_{B_1}'(r)+\overline{\chi}_{B_2}'(r))h_1(r)+(\overline{\chi}_{B_1}(r)+\overline{\chi}_{B_2}(r))h_1'(r)+Gh_1'(r)$$
and
$$H_{2,u}'(r)=(\overline{\chi}_{B_1}'(r)+\overline{\chi}_{B_2}'(r))h_{2,u}(r)+(\overline{\chi}_{B_1}(r)+\overline{\chi}_{B_2}(r))h_{2,u}'(r)+Gh_{2,u}'(r)$$
Now, by performing the long and necessary calculations we have that 
$$\displaystyle\lim_{\delta_0\rightarrow 0}\Big|\frac{-H_1'(r)}{\D_u(r)}\Big|=0$$
so $\delta_0$ can be chosen sufficiently small in order to ensure that $\Big| \frac{-H_1'(r)}{\D_u(r)}\Big| \leq \frac{1}{u}$, $\forall r\in [0,\varepsilon)$.

\vspace{15pt}

Summarizing our results after this analysis, we obtain
\begin{equation}\label{actualbound}
||\ln{f_t}-\ln{f_s}||_{\infty} \leq |\ln(l_t)-\ln(l_s)|
\end{equation}

Evaluating at $t=2,s=1$, (i.e. considering the family $\{\alpha_{1,l}\}_{l=l_1}^{l=l_2}$) and using \ref{rhs1} we get 
\begin{equation}\label{RHSL}
    d_{CBM}(\gamma,\beta) \leq ||\ln{f_t}-\ln{f_s}||_{\infty} \leq |\ln(l_2)-\ln(l_1)| = |\ln(l(\beta))-\ln(l(\gamma))|
\end{equation}
as required.  \\ \par 

Using (\ref{LHS}), (\ref{keyinequality}), (\ref{RHSV}), (\ref{RHSL}) for the first contact form $\alpha=\alpha_{k_1,l_1}$, the middle contact form $\gamma=\alpha_{k_2,\sqrt{\frac{k_2}{k_1}}l_1}$ and the second contact form $\beta=\alpha_{k_2,l_2}$ we get

\begin{gather}
d_{\infty}(\vec{x},\vec{y})=d_{\infty}((\sqrt{k_1},l_1),(\sqrt{k_2},l_2))\leq d_{CBM}(\alpha,\beta) \leq d_{CBM}(\alpha,\gamma)+d_{CBM}(\gamma,\beta) \nonumber \\
\leq |\ln(\sqrt{k_2})-\ln(\sqrt{k_1})|+|\ln(\sqrt{\frac{k_2}{k_1}}l_1)-\ln(l_2)| \nonumber\\
\leq 2|\ln(\sqrt{k_2})-\ln(\sqrt{k_1})|+|\ln(l_2)-\ln(l_1)|\leq 2d_{\infty}(\Vec{x},\Vec{y})
\end{gather}
or in the weaker but more symmetric form

\begin{equation}
   \frac{1}{2} d_{\infty}(\vec{x},\vec{y})\leq d_{CBM}(\alpha,\beta) \leq 2d_{\infty}(\Vec{x},\Vec{y})
\end{equation}
\qed

\vspace{20pt}

\section{Extension of the result to higher dimensions} \label{Higherdim}
 It was not known for quite a while what is the natural generalization of the notion of overtwistedness in higher dimensions. For instance, the question of how a higher dimensional analogue of an overtwisted disk should look like was very recently answered in \cite{MR3455235}, along the process of classifying and establishing an h-principle for overtwisted structures in higher dimensions. The most compatible, with the theory known so far (in the sense that one can show existence of $D^{2n}_{ot}$, bLobs and plastikstufes after performing the twist) version of generalized Lutz twists appeared recently in \cite{2016arXiv161009672A}. In this work, Adachi describes the construction of the higher dimensional analogue of the Lutz tube and instead of considering a contact form which generalizes the 3-dimensional $h_1(r)d\theta+h_2(r)d\phi$, a confoliation 1-form on $S^1\times \mathbb{R}^{2n}$ is picked, called $\omega_{tw}$, which in 3 dimensions is forced to be a contact form. This confoliation is shown to be conductive, thus thanks to \cite{altschuler2000}, there is a contact form which is $C^{\infty}$-close to this confoliation form Although Adachi's construction seems to link nicely with the development of the theory so far (i.e. for instance he proves existence of overtwisted disks as defined in \cite{MR3455235}), it is difficult to work with his local models and eventually calculate the dynamics of the Reeb vector field.\\ \par

Etnyre and Pancholi, in \cite{EP1} and \cite{EP2}, provided  a generalization of the Lutz twist having in mind the notion of a plastikstufe. Although their construction is very explicit in terms of dynamics, it requires a very hard work understanding the pseudoholomorphic curves involved. The approach that best fits the scope of this work is to use the description of contact manifolds as open book decompositions and what we need in this work was developed in \cite{MR2646902}. We describe it and we relate it to our goal. 

\subsection{Strategy for the extension of the result} \text{} \\ \par
Bourgeois and Van Koert in \cite{MR2646902} proved that negatively stabilized open books have vanishing contact homology. Casals, Murphy and Presas recently showed in \cite{MR3904160}  that such contact manifolds are overtwisted. The notion of open books which is easily generalized to higher dimensions will be the most fundamental tool in this extension to the higher dimensional cases. This section is inspired and follows the work from \cite{MR2646902}. \\ \par

Stabilization means the following. Let $P$ be the $(2n-2)$-dimensional page of the open book of the contact manifold $(M,\xi)$. Let $L$ be a Lagrangian $(n-1)$-disk with $\bd L\subset \bd P$. Suppose that the monodromy of the open book is the identity on a neighborhood of $L$. Attaching a Weinstein $(n-1)$-handle to $P$ along $\bd L$ we get a Lagrangian sphere in the new page $\widetilde{P}$. Choosing as monodromy a right-handed Dehn twist along this Lagrangian sphere and composing with the original monodromy on the rest of the page, we obtain a contact manifold. If we choose a boundary parallel Lagrangian ball $L$ we have that the resulting contact manifold is contactomorphic to $(M,\xi)$. It is conjectured that this holds more generally, i.e. removing the assumption that the Lagrangian ball is boundary parallel. In our case, we need an alternative description via contact connected sums (or Murasugi sums) and this is explained in \cite{MR2646902},
 section 7. We briefly describe the idea.\\ \par
 
Negative stabilization corresponds to contact connected summing our initial contact manifold $(M^{2n-1},\eta)$ with a special contact manifold $(S^{2n-1},\alpha_L)$ whose construction will be reviewed. This latter manifold is viewed as an open book decomposition with pages being $T^*S^{n-1}$ and monodromy a left-handed Dehn-Seidel twist. The situation highly resembles the 3-dimensional case as the contact form we consider is a generalization of the 3-dimensional overtwisted contact form we used when performing the Lutz twist. In fact, we still work locally and the 3-dimensional situation is recovered quite naturally as it is explained in \cite{MR2646902}, at the end of section 6. It will be enough to work on $(S^{2n-1},\alpha_L)$. By this we mean that we will describe an analogous to the 3-dimensional case modification on $(S^{2n-1},\alpha_L)$ and then we will connect sum with the contact manifold of interest $(M^{2n-1},\eta)$. It is important to mention that the modification and the connected sum processes commute. \\ \par

One might be worried that the process of connected summing has the effect that the homotopy type of the original hyperplane field $\ker(\eta)$ changes and thus the promised result cannot be extended to higher dimensions. This concern can be easily lifted by the fact that the set of almost contact structures (and in particular of contact structures) forms a group under connected sum. Thus, after performing the connected sum with the special contact $(S^{2n-1},\xi_-=\ker(\alpha_L))$, we can invert by connected summing with its inverse $(S^{2n-1},\xi_+)$ and cancel the possible alteration of the homotopy type of $\ker(\eta)$. Note that this inversion will not affect our calculations as we will primarily be interested in the behavior of the lowest action orbit. We can adjust for the lowest action orbit of the inverting $(S^{2n-1},\xi_+)$ to have arbitrarily large action. \\ \par

This is also good point to explain how the contact connected sum procedure affects the dynamics. Orbits that do not pass through the connected sum region are not affected. Furthermore, there are orbits that lie entirely in the connecting tube which are called tube orbits and there are also wandering orbits which start at the manifold $M$ go through the tube to $S^{2n-1}$ and return to $M$. Their action can be made arbitrarily large so they are of least importance for us. Moreover, Ustilovsky in his thesis \cite{MR2700259} showed that we can choose a contact form on the tube so that all tube
orbits lie within a contact sphere in the middle of the tube and all such orbits have odd degree $k\geq 2n-3$. Thus in dimensions $\geq 5$, we don't have any orbits of degree 1 within the tube. Recall that the 3-dimensional case was handled previously. It turns out that our focus has to be purely on $S^{2n-1}$.

\subsection{The Bourgeois-Van Koert open book construction} \label{OBA} \text{} \\ \par
We now describe a special case of the construction from \cite{MR2646902} where an open book decomposition for $S^{2n-1}$ is provided. This case is the most relevant to our work. Since we will need Dehn twists on $(T^*S^{n-1},d\lambda)$ with $\lambda=\vec{p}d\vec{q}$ in order to describe the construction, we briefly recall them.\\ \par

We start with the auxiliary map $$\sigma_t(\vec{q},\vec{p})=\begin{pmatrix}
\cos(t) & -\frac{1}{|\vec{p}|}\sin(t)\\
|\vec{p}|\sin(t) & \cos(t)
\end{pmatrix}
\begin{pmatrix}
\vec{q}\\
\vec{p}
\end{pmatrix}$$
Although the construction that appears in \cite{MR2646902} is more general, we will only need the special case of a $(-1)$-fold Dehn twist. We will define a function $g$ using an auxiliary function $\widetilde{g}$. Let $\widetilde{g}$ be a function satisfying
\begin{itemize}
    \item $\widetilde{g}(0)=-\pi$.
    \item Fix $p_0>0$. For $|\vec{p}|>0$, it increases to 0 at $\widetilde{g}(p_0)$.
    \item For $|\vec{p}|\geq p_0$, $\widetilde{g}(|\vec{p}|)=0$.
\end{itemize}
For a small $\widetilde{\varepsilon}>0$, we define $g=\widetilde{g}+\widetilde{\varepsilon}|\vec{p}|$. A helpful figure illustrating $g$ is figure \ref{fig:Dehn}.
\begin{figure}
    \centering
    \includegraphics[scale=0.8]{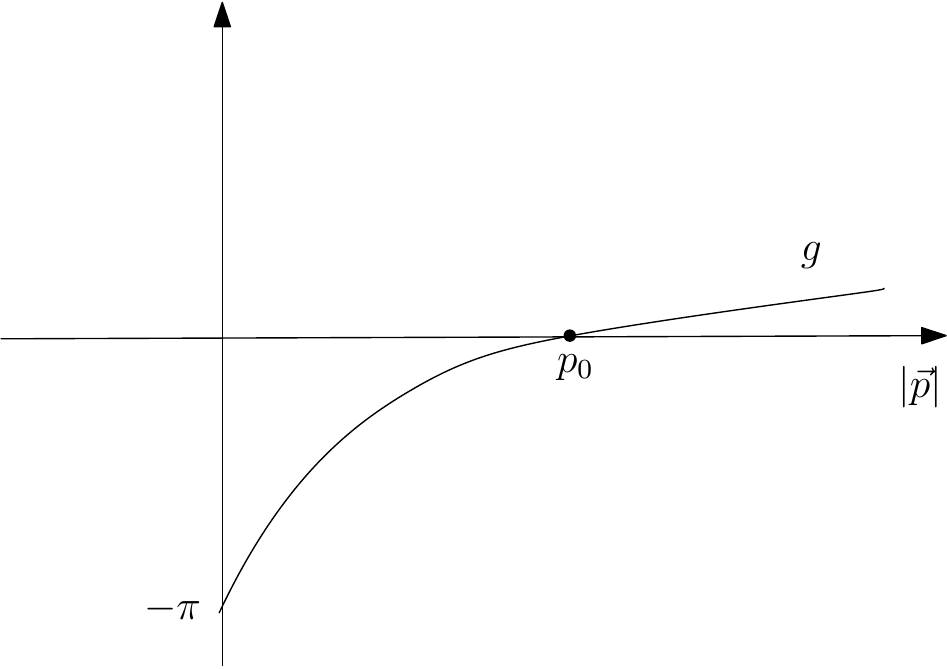}
    \caption{The function $g$ used in the definition of a $(-1)$-fold Dehn twist}
  \label{fig:Dehn}
\end{figure}\\ \par

We define the self diffeomorphism of $T^*S^{n-1}$ given by $$\tau(\vec{q},\vec{p})=\sigma_{g(|\vec{p}|)}(\vec{q},\vec{p})$$
This diffeomorphism is symplectic as $\tau^*d\lambda=d\lambda$ since we have $\tau^*\lambda=\lambda+|\vec{p}|d(g|\vec{p}|)$. We observe that $\lambda-\tau^*\lambda$ is exact and a primitive for this difference is $$h(|\vec{p}|)=1+\displaystyle\int_0^{|\vec{p}|}sg'(s)ds$$

We have the mapping torus for $T^*S^{n-1}$ defined as follows. First, consider the map

\begin{gather*}
\phi: T^*S^{n-1} \times \R \rightarrow T^*S^{n-1}\times \R\\
(\vec{q},\vec{p};\varphi)\mapsto (\tau(\vec{q},\vec{p});\varphi+h(|\vec{p}|))
\end{gather*}
This map preserves the contact form $\alpha=d\varphi+\vec{p}d\vec{q}$ on $T^*S^{n-1}\times \R$,  so we obtain a contact structure on the mapping torus

$$A:=T^*S^{n-1}\times \R/\varphi$$
All we have to do to complete the construction is to glue in the binding. In fact, as it is more relevant to our work, it is better to change point of view and focus on a neighborhood of the binding rather than the mapping torus just described. Another description using a different mapping torus clarifies the situation. \\ \par

Consider $$\widetilde{A}=((T^*S^{n-1}-0)\times \R)/(x,\varphi)\sim (x,\varphi+1)\simeq (T^*S^{n-1}-0)\times S^1$$

We have the map
\begin{gather*}
\psi:\widetilde{A}\rightarrow A\\
((\vec{q},\vec{p});\varphi)\mapsto (\sigma_{\varphi g}(\vec{q},\vec{p});h(|\vec{p}|)\varphi)
\end{gather*}

which is a diffeomorphism onto its image and allows us to think of the construction explicitly \say{standing on the binding}. In fact, we can choose a neighborhood of the binding so large that it covers the whole $\widetilde{A}$, so $B$ will describe the entire contact manifold except for a set of positive codimension which is precisely the set in the mapping torus corresponding to the zero section. Computations effectively only take place on $B$.

We have a contact form $\widetilde{\alpha}$ on $\widetilde{A}$ by pulling back $\alpha$ using $\psi$. Indeed, $$\widetilde{\alpha}=\psi^*\alpha=\widetilde{h}(|\vec{p}|)d\varphi+\vec{p}d\vec{q}$$
where $\widetilde{h}(|\vec{p}|)=1-\displaystyle\int_0^{|\vec{p}|}g(s)ds$.

Denote a neighborhood of the binding by $B:=ST^*S^{n-1} \times D^2$. As mentioned before, this neighborhood covers all of $S^{2n-1}$ except for the set in the mapping torus corresponding to the zero section of the pages. We think of $T^*S^{n-1}\subset \R^{2n}$ described by the equations.
$$\vec{p}\cdot \vec{p}=1, \quad \vec{q}\cdot \vec{q}=0, \quad \vec{p}\cdot \vec{q}=0$$
On this neighborhood of the binding we have the contact form $$\alpha=h_1(r)\lambda+h_2(r)d\varphi$$
where $\lambda$ is the restriction of the canonical form $\vec{p}d\vec{q}$ to $ST^*S^{n-1}$ and $(r,\varphi)$ the polar coordinates on $D^2$. The functions $h_1,h_2$ are chosen so that $\alpha$ is a contact form and matches $\widetilde{\alpha}$ in a collar neighborhood of the boundary, i.e. close to the set corresponding to the zero section of the pages. For $\alpha$ to be a contact form we need to assume that $h_1\neq 0$ and $\frac{h_1h_2'-h_1'h_2}{r}=\frac{D(r)}{r}\neq 0$. Such choice is depicted in the figure \ref{fig:Hdimh1h2}.  \\ \par

\begin{figure}
    \centering
    \includegraphics{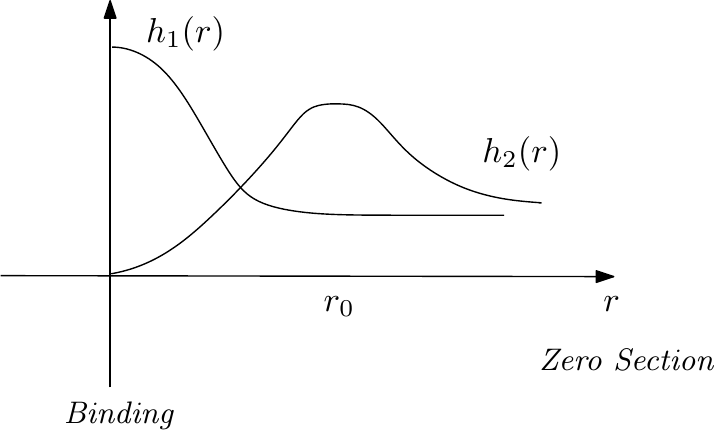}
    \caption{The functions $h_1(r)$ and $h_2(r)$}
    \label{fig:Hdimh1h2}
\end{figure}

It is both interesting and useful to digress a bit in order to write down explicitly how the Reeb orbits in $B$ look like. We have that the Reeb field on the binding is 
$$R_{\lambda}=\vec{p}\bd_{\vec{q}}-\vec{q} \bd_{\vec{p}}$$
and in a neighborhood of the binding is

$$R_{\alpha}=\frac{1}{D(r)}\Big( h_2'(r)R_\lambda-h_1'(r)\bd_\varphi \Big)$$

So the orbits are

$$x(t)=\Big(\theta_0+\frac{h_2'(r)}{D(r)}t, r, \varphi_0-\frac{h_1'(r)}{D(r)}t \Big)$$

where $\theta$ the coordinate corresponding to the geodesic flow. We observe that we have a closed orbit whenever

$$\frac{h_1'(r)}{2\pi h_2'(r)}=\frac{p}{q}\in \Q \cup \{\infty\}$$

and the actions are

$$T=q\frac{D(r)}{h_2'(r)}=2\pi p \frac{D(r)}{h_1'(r)}$$

where sign($p$)$=$sign($h_1'(r)$), sign($q$)$=$sign($h_2'(r)$) and whenever $h_1'(r)=p=0$ or $h_2'(r)=q=0$ pick the one that makes sense. \\ \par

Taking a step back from describing the dynamics of the form, thus far a cooriented contact structure on $S^{2n-1}$ given as the kernel of the 1-form $\alpha$ is constructed. This contact sphere is denoted by $(S^{2n-1},\alpha)$. As shown by Bourgeois and Van Koert, after perturbation using a Morse function, there exists a unique closed orbit $\gamma_0$ which has linking number with the binding equal to 1. The choice of functions $h_1,h_2$ and $g$ can be adjusted so that $\gamma_0$ is the lowest action orbit, thus $\gamma_0$ can only bound holomorphic planes. What follows are statements proved in \cite{MR2646902}
\begin{itemize}
\item $\gamma_0$ is the lowest action orbit with action $\mathcal{A}(\gamma_0)\simeq 2\pi h_2(r_0)$.
\item $\gamma_0$ bounds a unique holomorphic plane.
\item $\gamma_0$ has degree 1.
\end{itemize}

\begin{Rem}
The fact that the action is approximately and not precisely $2\pi h_2(r_0)$ has to do with the fact that orbits are degenerate so we have to use a perturbation as in \cite{Bourgeois}, section 2.2.
\end{Rem}

From the items above the next proposition quickly follows.
\begin{Prop}
The $l$-invariant of $\alpha$ is $\mathcal{A}(\gamma_0)\simeq 2\pi h_2(r_0)$.
\end{Prop}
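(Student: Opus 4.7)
The plan is to combine the three properties of $\gamma_0$ listed immediately above the statement, in a way that exactly mirrors the argument at the end of Section \ref{Wendlwork} establishing $l(\alpha^{\delta,\delta'}) = \mathcal{A}(O_h)$ in the three-dimensional Lutz twist setting. The output will be a two-sided inequality $l(\alpha) \leq \mathcal{A}(\gamma_0) \leq l(\alpha)$.

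For the upper bound, I would argue that $\partial \gamma_0 = 1$ in the contact homology DG-algebra, which immediately gives $l(\alpha) \leq \mathcal{A}(\gamma_0) \simeq 2\pi h_2(r_0)$. The degree-1 property places $\gamma_0$ in the grading from which the differential can land on the unit (which sits in degree $0$). Uniqueness of the bounding holomorphic plane, combined with $\gamma_0$ having minimal action among all Reeb orbits, rules out both multiple covers and degeneration into a non-trivial holomorphic building: any such breaking would require a holomorphic plane asymptotic to an orbit of strictly lower action than $\gamma_0$, and no such orbit exists. Thus the moduli space of rigid holomorphic planes positively asymptotic to $\gamma_0$ consists of exactly one curve, with no compactness issues, so the geometric count equals the formal algebraic count — this is precisely the situation of part (iv) of Theorem 1.1 in \cite{MR3981989} — yielding $\partial \gamma_0 = 1$.

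For the reverse inequality, the action of any monomial in good Reeb orbits is the sum of the actions of its factors, so every element in positive degree of the chain complex has action at least $\mathcal{A}(\gamma_0)$, since $\gamma_0$ is the lowest action Reeb orbit. In particular, any primitive $x$ with $\partial x = 1$ must satisfy $\mathcal{A}(x) \geq \mathcal{A}(\gamma_0)$, which gives $l(\alpha) \geq \mathcal{A}(\gamma_0)$.

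The main obstacle, as in the three-dimensional case, is justifying that the algebraic count underlying $\partial \gamma_0 = 1$ is genuinely equal to $1$ rather than some other non-zero rational that could a priori be introduced by abstract perturbations needed to thicken the moduli spaces. This is resolved exactly as in Claim \ref{clunique}: the relevant moduli space is already regular because the holomorphic plane in the Bourgeois-Van Koert model is a leaf of a stable finite energy foliation, so the regularity hypothesis of part (iv) of Theorem 1.1 in \cite{MR3981989} is satisfied and the formal count agrees with the geometric count of one. After the standard Morse-Bott perturbation of \cite{Bourgeois} that makes $\gamma_0$ non-degenerate, the action shifts only by an arbitrarily small amount, which accounts for the approximate equality $\mathcal{A}(\gamma_0) \simeq 2\pi h_2(r_0)$.
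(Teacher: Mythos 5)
Your proof is correct and follows essentially the same route as the paper, which simply observes that the proposition follows from the three bulleted facts imported from \cite{MR2646902}: uniqueness of the rigid plane bounded by $\gamma_0$ gives $\partial\gamma_0=1$ and hence $l(\alpha)\leq\mathcal{A}(\gamma_0)$, while minimality of $\mathcal{A}(\gamma_0)$ among all orbit actions forces any primitive of the unit to have action at least $\mathcal{A}(\gamma_0)$. The one caveat is your appeal to a stable finite energy foliation to justify regularity: that is Wendl's three-dimensional mechanism and does not transfer verbatim to the higher-dimensional Bourgeois--Van Koert model, where the statement that $\gamma_0$ bounds a unique holomorphic plane with the requisite transversality (so that the geometric and algebraic counts agree) is itself part of what is quoted from \cite{MR2646902}.
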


\subsection{Modification of contact form} \text{} \\ \par
Before we start with any modification it is important to recall the general direction. Our goal is to begin with a contact manifold $(M,\xi)$, connect sum with a special modifying contact $S^{2n-1}$ which makes it overtwisted and then compensate for possible alteration of the homotopy type of the plane field by connected summing with the inverting $S^{2n-1}$. As explained earlier, since almost contact structures form a group under connected sum, we can compensate for the possible change of the homotopy type of $\ker(\eta)$. After this, we bi-Lipschitz embed the space of contact forms supporting this contact structure on $M\#S^{2n-1}\#S^{2n-1}$ inside $\R^2$. Since the modification only takes place away of the zero section of the pages of the open book for $S^{2n-1}$ and the Darboux ball we use to connect sum is close to the region corresponding to the zero section of the pages, it is enough to describe the modification purely on $S^{2n-1}$. Again, we have some restrictions to this modification. If we start with an algebraically overtwisted contact manifold we can make the $l$-invariant as large as the primitive for the unit of the contact homology algebra before connected summing. Moreover, the modified $l$-invariant can be as large as the lowest action orbit on $M$ and the modifying $S^{2n-1}$. We can arrange for the actions of the orbits of the compensating $S^{2n-1}$ to be arbitrarily large so as they do not affect our calculations. We remark that these restrictions do not affect our result since it is a large-scale geometry result. \\ \par
Similarly to the 3-dimensional case we modify $\alpha$ on $S^{2n-1}$ as follows

$$
\alpha_{k,l}=F((\ln(\sqrt[n]{k}),\ln(l)))=\sqrt[n]{k}(h_1(r)\lambda+h_{2,l}(r)d\phi)
$$
where $h_{2,l}(r)$ is a smooth function which is identical to $h_2(r)$ outside of a small neighborhood of $r_0$ and $h_{2,l}(r_0)=\frac{l}{2\pi(1+\delta \mu(\theta_-))}$, where $\mu(\theta_-)$ is the value of the Morse function used for the perturbation at the critical point $\theta_-$ which corresponds to the lowest action orbit $\gamma_0$.
Following the proof from \cite{MR2646902}, we have for the lowest action orbit $\gamma_0$ that
\begin{itemize}
    \item $\mathcal{A}(\gamma_0)=(1+\delta \mu(\theta_-)) 2\pi \sqrt[n]{k}\cdot h_{2,l}(r_0)$, for $0<\delta<<1$ which comes from the perturbation.
    \item The degree of $\gamma_0$ is equal to 1.
    \item $\gamma_0$ bounds a unique holomorphic plane.
\end{itemize}
In summary, we have that the $l$-invariant $l(\alpha_{k,l})=\mathcal{A}(\gamma_0)=(1+\delta)2\pi \sqrt[n]{k}\cdot h_{2,l}(r_0)$. Note that when modifying it we can normalize in order for $k$ to be equal to 1.\\ \par

Essentially the proof from subsection \ref{pfofqiemb} carries over to the higher dimensional case with little modification. We briefly describe the argument. The following two lemmata, as in 3 dimensions, are essential for the proof. First, a quick calculation which is almost identical to the proof of lemma \ref{volume} shows
\begin{Lemma}
If $\alpha \prec \beta$, then $Vol((S^{2n-1},\alpha))\leq Vol((S^{2n-1},\beta))$. Also, $Vol(S^{2n-1},C \cdot \alpha)=C^n \cdot Vol(S^{2n-1},\alpha)$.
\end{Lemma}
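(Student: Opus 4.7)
My plan is to mimic the three-dimensional argument from Lemma \ref{volume} verbatim, with the only change being the higher wedge powers. The volume form on a $(2n-1)$-dimensional contact manifold $(Y,\alpha)$ is $\alpha \wedge (d\alpha)^{n-1}$, so I would first aim to show that a cs-embedding $\phi\colon (S^{2n-1},\alpha)\rightarrow W(\beta)\subset SS^{2n-1}$ preserves volume, i.e.\ $\int_{\phi(S^{2n-1})} \theta \wedge (d\theta)^{n-1} = \mathrm{Vol}(S^{2n-1},\alpha)$. Since $\phi^{*}(\theta+\eta)=\alpha$ with $\eta=df$ exact and compactly supported, pulling back gives $\phi^{*}\theta=\alpha-d(\phi^{*}f)$, and in particular $\phi^{*}(d\theta)=d\alpha$ because $d\eta=0$. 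Then
\begin{align*}
\int_{\phi(S^{2n-1})}\theta\wedge(d\theta)^{n-1}&=\int_{S^{2n-1}}(\alpha-d(\phi^{*}f))\wedge(d\alpha)^{n-1}\\
&=\mathrm{Vol}(S^{2n-1},\alpha)-\int_{S^{2n-1}}d\bigl(\phi^{*}f\cdot(d\alpha)^{n-1}\bigr),
\end{align*}
using that $(d\alpha)^{n-1}$ is closed, and the last integral vanishes by Stokes since $\partial S^{2n-1}=\emptyset$.

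Next, I would invoke Stokes' theorem on the compact region $U\subset SS^{2n-1}$ bounded by $\phi(S^{2n-1})$ on one side and $Y_{\beta}=\partial W(\beta)$ on the other (well-defined because $\phi(S^{2n-1})\subset W(\beta)$, and the two hypersurfaces can be made disjoint by a slight rescaling if needed). Writing $\omega=d\theta$ and using $d(\theta\wedge\omega^{n-1})=\omega^{n}$, Stokes' theorem yields
\begin{equation*}
\int_{U}\omega^{n}=\int_{Y_{\beta}}\theta\wedge\omega^{n-1}-\int_{\phi(S^{2n-1})}\theta\wedge\omega^{n-1}=\mathrm{Vol}(S^{2n-1},\beta)-\mathrm{Vol}(S^{2n-1},\alpha),
\end{equation*}
where the boundary identification on $Y_\beta$ follows from $\theta|_{Y_\beta}=\beta$. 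Since $\omega^{n}$ is (up to a positive constant) the symplectic volume form on $SS^{2n-1}$, the left-hand side is non-negative, giving $\mathrm{Vol}(S^{2n-1},\alpha)\leq\mathrm{Vol}(S^{2n-1},\beta)$.

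For the scaling statement, the computation is immediate: $d(C\alpha)=C\,d\alpha$, so $(C\alpha)\wedge(d(C\alpha))^{n-1}=C^{n}\,\alpha\wedge(d\alpha)^{n-1}$, and integrating gives $\mathrm{Vol}(S^{2n-1},C\alpha)=C^{n}\,\mathrm{Vol}(S^{2n-1},\alpha)$. I do not anticipate a genuine obstacle here; the only point requiring slight care is the orientation convention on $\partial U$, which must be consistent with the orientation of $SS^{2n-1}$ induced by $\omega^{n}$ and with the co-orientation of $\xi$ on $Y_{\beta}$ and $\phi(S^{2n-1})$. Both $Y_{\beta}$ and $\phi(S^{2n-1})$ are transverse to a Liouville vector field pointing into $U$ from $\phi(S^{2n-1})$ and out of $U$ through $Y_{\beta}$ (after suitably choosing $L_{\theta+\eta}$ near $\phi(S^{2n-1})$), which pins down the signs and makes the inequality go in the desired direction.
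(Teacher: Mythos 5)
Your proposal is correct and follows essentially the same route as the paper, which proves the higher-dimensional statement by repeating the argument of Lemma \ref{volume}: volume preservation of the cs-embedding via $\phi^{*}(\theta+\eta)=\alpha$, exactness of $\eta$ and Stokes on the closed manifold, then Stokes applied to the region (Liouville cobordism) between $\phi(S^{2n-1})$ and $Y_{\beta}$ using $d(\theta\wedge(d\theta)^{n-1})=(d\theta)^{n}\geq 0$, and the evident $C^{n}$-scaling of $\alpha\wedge(d\alpha)^{n-1}$. Your version is, if anything, slightly more explicit about the cobordism region and the orientation conventions than the paper's sketch.
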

Moreover, since the proof of lemma \ref{linvineq} does not depend on the dimension, we have that it still holds. We recall it for clarity.
\begin{Lemma}
If $\alpha \prec \beta$, then $l(\alpha) \leq l(\beta)$. Also, $l(C\cdot \alpha)=C \cdot l(\alpha)$
\end{Lemma}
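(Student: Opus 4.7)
The plan is to adapt, essentially verbatim, the dimension-independent strategy used earlier for Lemma \ref{linvineq}. The core idea is to convert the combinatorial relation $\alpha \prec \beta$ into an exact symplectic cobordism in the symplectization, and then invoke the functoriality of filtered contact homology.

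First I would unpack the hypothesis. The relation $\alpha \prec \beta$ produces a cs-embedding $\phi:(S^{2n-1},\alpha) \to SS^{2n-1}$ with $\phi^{*}(\theta+\eta)=\alpha$ for some exact, compactly supported one-form $\eta$, and with $\phi(S^{2n-1})\subset W(\beta)$. Since $\phi(S^{2n-1})$ is transverse to the Liouville vector field $L_{\theta+\eta}$ and $Y_{\beta}$ is transverse to $L_{\theta}$ (and $\eta$ may be taken to have support disjoint from a collar of $Y_\beta$), the compact region $K \subset SS^{2n-1}$ bounded by $\phi(S^{2n-1})$ and $Y_\beta$ carries the exact symplectic structure $d(\theta+\eta)$, and is an exact symplectic cobordism whose negative end is $(S^{2n-1},\alpha)$ and whose positive end is $(S^{2n-1},\beta)$.

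Next, I would invoke the functoriality of filtered contact homology. Such an exact cobordism induces, for every action threshold $t>0$, a unital $\mathbb{Q}$-algebra homomorphism
\[
\Psi^{\leq t}:CH_*^{\leq t}(\beta)\longrightarrow CH_*^{\leq t}(\alpha)
\]
(in particular, $\Psi^{\leq t}([1])=[1]$), as guaranteed by the main construction of \cite{MR3981989}. If $t\geq l(\beta)$, then by definition $[1]=0$ in $CH_*^{\leq t}(\beta)$, so applying $\Psi^{\leq t}$ yields $[1]=0$ in $CH_*^{\leq t}(\alpha)$, i.e.\ $l(\alpha)\leq t$. Taking the infimum over such $t$ gives $l(\alpha)\leq l(\beta)$.

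For the second assertion, rescaling is pointwise: the Reeb vector field of $C\alpha$ is $\tfrac{1}{C}R_\alpha$, so the Reeb orbits of $\alpha$ and of $C\alpha$ are the same unparametrized curves, and the period (hence the action $\int_\gamma C\alpha = C\int_\gamma \alpha$) of every orbit is multiplied by $C$. The grading and differential are unaffected, so the chain complexes satisfy $CC_*^{\leq t}(\alpha)\cong CC_*^{\leq Ct}(C\alpha)$ as filtered complexes, and the minimal filtration level at which $[1]$ becomes exact scales by $C$. Hence $l(C\alpha)=C\cdot l(\alpha)$.

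The only genuine obstacle is the appeal to functoriality with respect to the action filtration in the overtwisted higher dimensional setting. The monotonicity $\mathcal A \circ \Psi \leq \mathcal A$ is a standard Stokes/energy estimate on holomorphic curves in the cobordism $(K,d(\theta+\eta))$, but one needs that the chain-level map is well-defined and unital after the virtual perturbation scheme of \cite{MR3981989}. Once this is granted, the argument is identical in every dimension, which is precisely why the 3-dimensional proof transplants without modification.
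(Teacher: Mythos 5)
Your proposal is correct and follows essentially the same route as the paper: the relation $\alpha \prec \beta$ yields a trivial exact (Liouville) cobordism between $\phi(Y)$ and $Y_\beta$ inside the symplectization, whose filtered, unital functorial map $CH_*^{\leq t}(\beta)\rightarrow CH_*^{\leq t}(\alpha)$ transfers the vanishing of the class of the unit and gives $l(\alpha)\leq l(\beta)$, while the rescaling claim follows from the action scaling under $\alpha \mapsto C\alpha$. Your write-up is in fact slightly more explicit than the paper's (which phrases the key point as the map sending $0$ to $0$ rather than spelling out unitality), but the content is the same.
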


\subsubsection{Left inequality}
Letting $\alpha=\alpha_{k_1,l_1}$, $\beta=\alpha_{k_2,l_2}$ and $(\vec{x},\vec{y})=((\ln(\sqrt[n]{k_1}),\ln(l_1)),(\ln(\sqrt[n]{k_2}),\ln(l_2)))$ and following the same calculations as in subsection \ref{lineq} and using the previous 2 adapted lemmata, we obtain
$$|\ln(\sqrt[n]{k_2})-\ln(\sqrt[n]{k_1})|\leq d_{CBM}(\alpha,\beta)$$
and
$$|\ln(l_2)-\ln(l_1)|\leq d_{CBM}(\alpha,\beta)$$
Thus,
$$d_{\infty}(\vec{x},\vec{y})\leq d_{CBM}(\alpha,\beta)$$
The only difference comes from the dimension dependence of the volume so we have to work with the $n^{th}$ root instead of a square root.

\subsubsection{Right inequality}
The spirit of the proof of the right-hand inequality is identical to the 3-dimensional case. We still have to use Gray's stability theorem in order to obtain the proper bounds, yet since we are not performing a Lutz twist the analysis of the modification is slightly different. Again we let $\vec{x}=(\sqrt[n]{k_1},l_1)$ and $\vec{y}=(\sqrt[n]{k_2},l_2)$. We will use an analogous triangle inequality
\begin{gather}
    d_{CBM}(F(\Vec{x}),F(\Vec{y}))=d_{CBM}((\sqrt[n]{k_1},l_1),(\sqrt[n]{k_2},l_2)) \leq \nonumber \\ d_{CBM}\Big(\Big(\sqrt[n]{k_1},l_1\Big),\Big(\sqrt[n]{k_2},\sqrt[n]{\frac{k_2}{k_1}}l_1\Big)\Big)+d_{CBM}\Big(\Big(\sqrt[n]{k_2},\sqrt[n]{\frac{k_2}{k_1}}l_1\Big),\Big(\sqrt[n]{k_2},l_2\Big)\Big) 
    \label{keyinequalityh}
\end{gather}
Note that as in the 3-dimensional case we abuse the notation in order to avoid making it too heavy. This inequality again comes from the fact that modification of volume has a small effect on the $l$-invariant. As in the previous lower dimensional case, we can use the trick of the compensating function in order to modify the $l$-invariant of our contact form without modifying the volume. Similarly to the 3-dimensional case, letting
\begin{gather*}
    \alpha=F((\ln(\sqrt[n]{k_1}),\ln(l_1)))\\
    \beta=F((\ln(\sqrt[n]{k_2}),\ln(l_2)))\\
    \gamma=F\Big(\Big(\ln(\sqrt[n]{k_2}),\sqrt[n]{\frac{k_2}{k_1}}\ln(l_2)\Big)\Big)
\end{gather*}
we obtain by purely using the volume modification as in the 3-dimensional case

\begin{gather}
d_{CBM}(\alpha,\gamma)\leq \Big|\sqrt[n]{k_2}-\sqrt[n]{k_1}\Big|
\end{gather}

We now need to study how the $l$-invariant modification affects our embedding. Namely, obtain bounds when travelling from the form $\gamma$ to the form $\beta$. This is again done by using Gray's stability theorem. Similarly to 3 dimensions, the quantity we need to provide bounds for is

\begin{equation}\label{tobeboundedh}
||\ln{f_t}-\ln{f_s}||_{\infty} \leq \int_s^t \Big|\Big|\frac{d}{du}(h_{2,u}(r))\cdot\Big(\frac{-h_1'(r)}{D_u(r)}\Big)\Big|\Big|_{\infty}du
\end{equation}
where $f_t$ the conformal factor coming from Gray's theorem and $D_u(r)=h_1(r)h_{2,u}'(r)-h_1'(r)h_{2,u}(r)$.
We have that $\frac{d}{du}(h_{2,u}(r))$ is zero outside of a neighborhood of $r_0$ and attains its maximum at $r_0$. One can adjust the slope of $h_1(r)$ near $r_0$ in order to make $||D_u(r)||_{\infty}$ attain its minimum at $r_0$. This can be done by requiring $h_1'(r)\simeq 0$ and $h_2'(r)\simeq \frac{h_1'(r)h_2(r)}{h_1(r)}$ in a neighborhood of $r_0$. Then, we get that 
$$\Big|\Big|\frac{d}{du}(h_{2,u}(r))\cdot\Big(\frac{-h_1'(r)}{D_u(r)}\Big)\Big|\Big|_{\infty}=\Big|\frac{d}{du}(h_{2,u}(r_0))\cdot\Big(\frac{-1}{h_{2,u}(r_0)}\Big)\Big|$$
Hence integrating (\ref{tobeboundedh}) with respect to $u$ we obtain
$$||\ln{f_t}-\ln{f_s}||_{\infty} \leq |\ln(l_t)-\ln(l_s)|$$
Applying now this result to the forms $\gamma$ and $\beta$ we get more precisely that
$$d_{CBM}(\gamma,\beta)\leq ||\ln{f_t}-\ln{f_s}||_{\infty} \leq |\ln(l(\gamma))-\ln(l(\beta))|$$
Combining the triangle inequality with the two inequalities obtained above we get
\begin{gather*}
 d_{CBM}(\alpha,\beta)\leq d_{CBM}(\alpha,\gamma)+d_{CBM}(\gamma,\beta)\\ \leq
 |\sqrt[n]{k_2}-\sqrt[n]{k_1}|+|\ln(l(\beta))-\ln(l(\gamma))| \\
 \leq |\sqrt[n]{k_2}-\sqrt[n]{k_1}|+\Big|\ln\Big(\sqrt[n]{\frac{k_2}{k_1}}l_1\Big)-\ln(l_2)\Big| \\ \leq 
 2|\sqrt[n]{k_2}-\sqrt[n]{k_1}|+|\ln(l_2)-\ln(l_1)|\leq 2d_{\infty}(\vec{x},\vec{y})
\end{gather*}
\\ \par

As a concluding remark, let us summarize that what we achieved, combining the results of the last and this subsection, is to extend the result to all odd dimensions greater than 3 and thus prove the theorem in its full generality.

\section{Remark on the possibility of more degrees of freedom and use of other homology theories} \label{Remarkonhigherdegrees}

The first degree of freedom (volume) was quite natural to consider. Also, the class corresponding to the empty word is a very special element of contact homology theories and in the overtwisted case, it is expected to vanish. It is therefore natural to ask about the filtration level for which it vanishes. The empty word class (known in most homology theories as the contact invariant, e.g. in ECH) appears at level 0 and vanishes at some higher level. Working with the contact homology algebra of overtwisted structures, all classes have to vanish at some filtration level, thus there are no semi-infinite bars in the barcode of the persistence module $CH^{\leq t}_*(Y,\lambda_{ot})$. One natural generalization of the idea would be to check what are the vanishing levels or lengths for the other finite bars and thus bi-Lipschitz or quasi-isometrically embed part of $\mathbb{R}^n$, where $n-1$ is the cardinality of the set of finite bars, into the space of contact forms. The algebra structure yields that most of the information is encoded to what we already have done. Namely, as explained in the introduction, the algebra structure yields that the largest length of a finite bar in the barcode is the $l$-invariant. \\ \par

This shows that using contact homology at least, in the overtwisted case there is no other powerful enough (or large enough) bar in the barcode to help us distinguish between two contact forms and have more degrees of freedom. This is not the case with ECH as one can see. ECH, in contrast with SFT theories, does not have a natural algebra structure (i.e. a multiplication that behaves well under grading and $\partial$). This turns out to be a helpful thing, as in the absence of algebra structure, no finite bar in principle controls the length of any other finite bar. The question regarding the $l$-invariant was about the freedom to alter the right endpoint of the corresponding vanishing bar (in this case the bar corresponding to the empty Reeb orbit) in the barcode of contact homology. This directly translates to filtered ECH and questions about its finite bars. \\ \par

Another direction may be provided when looking at ECH capacities. ECH capacities capture the information emerging from semi-infinite bars and not finite bars, yet one might be able to show that some process analogous to the Lutz twist in our setting helps us control them. Looking at the full ECH spectrum (see \cite{2010arXiv1005.2260H}) will be enough since parts of the symplectization are always trivial cobordisms. This is a matter of a future work direction.

\bibliographystyle{alpha}
\bibliography{main.bib}

\end{document}